\definecolor{cadmiumgreen}{rgb}{0.0, 0.42, 0.24}
\def\QQ{{\mathbb Q}}
\def\ZZ{{\mathbb Z}}
\def\RR{{\mathbb R}}
\def\CC{{\mathbb C}}
\def\EE{{\mathbb E}}
\def\T{{\mathbb T}}
\def\Oc{{\mathbf O}}
\def\B{{\mathbf B}}
\def\Q{{\mathbf Q}}
\def\L{{\mathbf L}}
\def\D{{\mathbf D}}
\def\R{{\mathbf R}}
\def\I{{\mathbf I}}
\def\M{{\mathbf M}}
\def\N{{\mathbf N}}
\def\P{{\mathbf P}}
\def\Xib{{\mathbf \Xi}}
\def\F{\mathsf F}
\def\Rrm{\mathrm R}
\def\Prm{\mathrm P}
\def\Cb{{\mathbf C}}
\def\hf{{\mathfrak h}}
\def\Tc{{\mathcal T}}
\def\Ab{{\mathbf A}}
\def\Xb{{\mathbf X}}
\def\Yb{{\mathbf Y}}
\def\Zb{{\mathbf Z}}
\def\Z{{\mathcal Z}}
\def\ones{{\mathbf 1}}
\def\db{\boldsymbol{\delta}}
\def\Gb{{\mathbf G}}
\def\vb{{\mathbf v}}
\def\eb{\mathrm{Par}(e)}
\def\st{{\mathfrak s}}
\def\tf{{\mathfrak t}}
\def\d{\mathrm{d}}
\def\h{\mathrm{h}}
\def\Oc{{\mathcal O}}
\def\Mc{{\mathcal M}}
\def\MA{{\mathcal A}}
\def\Fc{{\mathcal F}}
\def\Sc{{\mathcal S}}
\def\C{{\mathcal C}}
\def\Dc{{\mathcal D}}
\def\opn#1#2{\def#1{\operatorname{#2}}} 
\opn\depth{depth} 
\opn\codim{codim}
\opn\ini{in} 
\opn\LM{LM}
\opn\LC{LC}
\opn\NF{NF}
\opn\Merge{Merge}
\opn\sgn{sgn}
\opn\div{div} 
\opn\Div{Div} 
\opn\Pic{Pic}
\opn\Prin{Prin}
\opn\Del{Del}
\opn\op{op}
\opn\ends{ends}
\opn\indeg{indeg} 
\opn\sign{sign}
\opn\outdeg{outdeg}
\opn\red{red}
\opn\Spec{Spec} 
\opn\Supp{Supp} 
\opn\supp{supp} 
\opn\Ker{Ker} 
\opn\Coker{Coker} 
\opn\Hom{Hom}
\opn\Tor{Tor} 
\opn\id{id}
\opn\span{span}
\opn\Image{Image}
\opn\con{conv} 
\opn\relint{rel.int} 
\opn\vol{vol}
\opn\val{val}
\opn\Zh{Zh}
\opn\Ber{Ber}
\opn\Vor{Vor}
\opn\Vol{Vol}
\opn\Covol{Covol}
\opn\Jac{Jac}
\opn\Tr{Trace}
\opn\Dir{Dir}
\opn\can{can}
\opn\syz{{\rm syz}}
\opn\spoly{{\rm spoly}}
\opn\LM{{\rm LM}}
\opn\lm{{\rm lm}}
\opn\lcm{{\rm lcm}} 
\opn\Ac{\mathcal A}
\opn\dist{dist}
\opn\pd{pd}
\opn\en{en}
\opn\PL{PL}
\opn\dmeasz{DMeas_0}
\opn\dmeas{DMeas}
\opn\TP{T}
\opn\circu{circ}
\opn\cocirc{cocirc}
\opn\Proj{Proj}
\def\Implies{\ifmmode\Longrightarrow \else
        \unskip${}\Longrightarrow{}$\ignorespaces\fi}
\def\implies{\ifmmode\Rightarrow \else
        \unskip${}\Rightarrow{}$\ignorespaces\fi}
\def\iff{\ifmmode\Longleftrightarrow \else
        \unskip${}\Longleftrightarrow{}$\ignorespaces\fi}
\newtheorem{Theorem}{Theorem}[section]
\newtheorem{Lemma}[Theorem]{Lemma}
\newtheorem{Proposition}[Theorem]{Proposition}
\theoremstyle{remark}
\newtheorem{Remark}[Theorem]{Remark}
\theoremstyle{definition}
\newtheorem{Example}[Theorem]{Example}
\newtheorem{Definition}[Theorem]{Definition}
\newtheorem*{Notation}{Notation}
\def\qed{\ifhmode\textqed\fi
      \ifmmode\ifinner\quad\qedsymbol\else\dispqed\fi\fi}
\def\textqed{\unskip\nobreak\penalty50
       \hskip2em\hbox{}\nobreak\hfil\qedsymbol
       \parfillskip=0pt \finalhyphendemerits=0}
\def\dispqed{\rlap{\qquad\qedsymbol}}
\numberwithin{equation}{section}
\tikzstyle{Cwhite}=[scale = .8,circle, fill = white, minimum size=3mm] 
\tikzstyle{Cgray}=[scale = .4,circle, fill = gray, minimum size=3mm] 
\tikzstyle{Cblack2}=[scale = .4,circle, fill = black, minimum size=5mm] 
\tikzstyle{Cblack}=[scale = .7,circle, fill = black, minimum size=3mm]
\tikzstyle{C0}=[scale = .9,circle, fill = black!0, inner sep = 0pt, minimum size=3mm]
\tikzstyle{C1}=[scale = .7,circle, fill = black!0, inner sep = 0pt, minimum size=3mm]
\tikzstyle{Cred}=[scale = .4,circle, fill = red, minimum size=3mm]
\def\h{\mathrm{h}}
\newcommand*\isom{\xrightarrow{\sim}}
\begin{document}

\title{Tropical moments of tropical Jacobians}

\author{Robin de Jong}
\email{\href{mailto:rdejong@math.leidenuniv.nl}{rdejong@math.leidenuniv.nl}}

\author {Farbod Shokrieh}
\email{\href{mailto:farbod@math.cornell.edu}{farbod@math.cornell.edu}, \href{mailto:farbod@math.ku.dk}{farbod@math.ku.dk}}


\subjclass[2010]{
\href{https://mathscinet.ams.org/msc/msc2010.html?t=14T05}{14T05},
\href{https://mathscinet.ams.org/msc/msc2010.html?t=14G40}{14G40},
\href{https://mathscinet.ams.org/msc/msc2010.html?t=11G50}{11G50},
\href{https://mathscinet.ams.org/msc/msc2010.html?t=94C05}{94C05},
\href{https://mathscinet.ams.org/msc/msc2010.html?t=52C45}{52C45}}


\date{\today}

\begin{abstract}
Each metric graph has canonically associated to it a polarized real torus called its tropical Jacobian. A fundamental real-valued invariant associated to each polarized real torus is its tropical moment.  We give an explicit and efficiently computable formula for the tropical moment of a tropical Jacobian in terms of potential theory on the underlying metric graph. We show that there exists a universal linear relation between the tropical moment, the tau invariant, and the total length of a metric graph. We argue that this linear relation is a non-archimedean analogue of a recent remarkable identity established by Wilms for invariants of compact Riemann surfaces. We also relate our work to the computation of heights attached to principally polarized abelian varieties.
\end{abstract}

\maketitle

\setcounter{tocdepth}{1}
\tableofcontents 

\section{Introduction} \label{sec:Intro}
\renewcommand*{\theTheorem}{\Alph{Theorem}}
\subsection{Background and aim}

A {\em metric graph} is a compact, connected length metric space $\Gamma$ homeomorphic to a topological graph. It is known that metric graphs are the appropriate analogues of compact, connected Riemann surfaces in the non-archimedean setting. For example, one can associate to $\Gamma$ its {\em tropical Jacobian} $\Jac(\Gamma) = H_1(\Gamma, \RR) / H_1(\Gamma, \ZZ)$, which is a real torus endowed with a canonical inner product $[\cdot,\cdot]$ on its tangent space (i.e.,\ a {\em polarized real torus}, or {\em polarized tropical abelian variety}). 

The lattice $H_1(\Gamma,\ZZ)$ together with the inner product $[\cdot,\cdot]$ canonically determines a compact convex region in $H_1(\Gamma,\RR)$ given as
\[ \Vor(0) =\left\{ z \in H_1(\Gamma,\RR) \, \colon \, [ z  , z ] \leq   [ z-\lambda,z-\lambda ] \,\, \textrm{for all} \,\, \lambda \in H_1(\Gamma,\ZZ) \right\} \, , \]
known as the {\em Voronoi polytope} centered at the origin. Let $\mu_L$ denote the Lebesgue measure on $H_1(\Gamma,\RR)$, normalized to give $\Vor(0)$ unit volume. In this paper we are interested in the {\em tropical moment} of $\Jac(\Gamma)$,  defined as the integral

\begin{equation} \label{eqn:integral_expr} I(\Jac(\Gamma)) = \int_{\Vor(0)} [ z,z ] \, \d \mu_L(z) \, . 
\end{equation}
We provide an explicit and efficiently computable formula for $I(\Jac(\Gamma))$ in terms of potential theory on $\Gamma$ (see Theorem~\ref{thm:momentIntro}). As we will explain, our formula seems rather unexpected from the computational complexity point of view.

Next, we establish a simple connection between $I(\Jac(\Gamma))$ and a well known potential theoretic invariant associated to $\Gamma$ called the {\em tau invariant}, denoted by $\tau(\Gamma)$ (see Theorem~\ref{thm:linearIntro}). The invariant $\tau(\Gamma)$ can be traced back to the fundamental work of Chinburg and Rumely \cite{cr} in their study of the Arakelov geometry of arithmetic surfaces at non-archimedean places. 

Finally, we argue that the identity we prove in Theorem~\ref{thm:linearIntro} is an analogue, in the non-archimedean setting, of a recent remarkable identity established by Wilms \cite{Wilms} relating three fundamental invariants of compact connected Riemann surfaces (see \S\ref{sec:wilms}). We also explain how our work is connected with the computation of the stable Faltings height of principally polarized abelian varieties defined over number fields (see \S\ref{sec:connection}).

\subsection{Formula for the tropical moment}

It is well known that one can think of a metric graph $\Gamma$ as an electrical network. For $x,y,z \in \Gamma$ we define $j_z(x,y)$ to be the electric potential at $x$ if one unit of current enters the network at $y$ and exits at $z$, with $z$ `grounded' (i.e. has zero potential). The {\em $j$-function} provides a fundamental solution of the Laplacian operator on $\Gamma$, so naturally it is an important function in studying harmonic analysis on $\Gamma$. The {\em effective resistance} between $x$ and $y$ is defined as $r(x,y) = j_y(x,x)$, which has the expected physical meaning in terms of electrical networks. See \S\ref{sec:laplace}.

It is convenient to fix a vertex set (which is a finite non-empty set containing all the branch points of $\Gamma$) and to think of the metric graph $\Gamma$ as being obtained from a finite (combinatorial) graph $G$. The metric data will be encoded by positive real numbers $\ell(e)$ for each edge $e$ of $G$. We call the finite weighted graph arising in this way a {\em model} of $\Gamma$. If $e$ is an edge of $G$, the {\em Foster coefficient} of $e$ is defined by $\F(e) = 1-{r(u, v)}/{\ell(e)}$, where $u$ and $v$ are the two endpoints of the edge $e = \{u,v\}$ (see Definition~\ref{def:foster}). 

Our formula for the tropical moment of a tropical Jacobian is as follows. 

\begin{Theorem} \label{thm:momentIntro} (=Theorem~\ref{thm:momentformula})
Let $\Gamma$ be a metric graph. Fix a model $G$ of $\Gamma$ and fix a vertex $q$ of $G$. Then
\[
I(\Jac(\Gamma)) = \frac{1}{12}\sum_{e} {\F(e)^2 \ell(e) +  \frac{1}{4}\sum_{e = \{u,v\}} \left(r(u,v) - \frac{j_{u}( v, q)^2 + j_{v}( u, q)^2}{\ell(e)}\right)} \, ,
\]
where the sums are over all edges $e \in E(G)$.
\end{Theorem}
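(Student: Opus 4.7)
The moment integral $I(\Jac(\Gamma)) = \int_{\Vor(0)} [z,z] \, d\mu_L$ should be reduced to a sum of edge-local contributions via the ambient-space decomposition $[z,z] = \sum_e \alpha_e(z)^2 \ell(e)$, where $\alpha_e(z)$ is the (signed) coefficient of an oriented edge $e$ when $z \in H_1(\Gamma, \RR) \subset \RR^{E(G)}$ is written in the edge basis. This reduces the task to computing, for each edge $e$, the second moment of the linear functional $\alpha_e$ under the uniform distribution on $\Vor(0)$.

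\textbf{Step 1 (coordinates).} Work inside $\RR^{E(G)}$ with its edge-length inner product, and use the orthogonal decomposition $\RR^{E(G)} = H_1(\Gamma, \RR) \oplus B(G)$ into cycle and cut space. The orthogonal projector $\pi \colon \RR^{E(G)} \to H_1(\Gamma, \RR)$ has matrix entries expressible in terms of the $j$-function; in particular $\pi(e)$ for a single edge $e = \{u,v\}$ can be written using $j$-function values and has squared length $\F(e)\ell(e)$, which is where the Foster coefficient enters naturally. Fixing a spanning tree and the associated fundamental-cycle basis turns the inner product into an explicit Gram matrix described by $j$-values.

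\textbf{Step 2 (second-moment computation).} The Voronoi polytope $\Vor(0)$ is tiled (after projection) by images of the unit cube in $\RR^{E(G)}$, and one can exploit this to push the integral from $\Vor(0)$ back to a product over edges. On each edge the induced distribution of $\alpha_e$ is supported on an interval of length $\F(e)$ centered at a ``drift'' governed by the interaction with the rest of the network through a chosen grounding at $q$. The second moment then splits as
\[
\int_{\Vor(0)} \alpha_e^2 \, d\mu_L \;=\; \underbrace{\frac{\F(e)^2}{12}}_{\text{variance on interval}} \;+\; \underbrace{(\text{drift})^2}_{\text{mean squared}},
\]
and the drift terms on the two sides of $e$ work out to $j_u(v,q)/\ell(e)$ and $j_v(u,q)/\ell(e)$; using the identity $j_u(v,q) + j_v(u,q) = r(u,v)$ together with $r(u,v) = \F(e)\ell(e) + (\text{parallel part})$ rearranges the drift contribution into the stated form $\tfrac14\bigl(r(u,v) - (j_u(v,q)^2 + j_v(u,q)^2)/\ell(e)\bigr)$.

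\textbf{Step 3 (assembly and $q$-independence).} Multiplying by $\ell(e)$, summing over $e$, and simplifying yields the formula of the theorem. Independence of the auxiliary vertex $q$ is not manifest from the edge-by-edge expression but follows a posteriori: using $j_u(v,q) - j_v(u,q) = r(q,u) - r(q,v)$, the $q$-dependent part collapses to the Dirichlet energy of $r(q,\cdot)$, which is constant in $q$ by reciprocity of effective resistance. \textbf{Main obstacle.} The decisive step is Step~2: producing the piecewise-affine description of the pushforward of $\mu_L$ under each edge coordinate, with intervals of length exactly $\F(e)$ and drifts matching the $j$-function values. Setting up this tiling correctly --- and matching the combinatorial data of the tiling to the reference vertex $q$ --- is where the technical work lies; the subsequent algebraic simplification is routine given the standard potential-theoretic identities for $j$.
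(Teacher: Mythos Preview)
Your Step~1 is fine: $[z,z]=\sum_e \alpha_e(z)^2\ell(e)$ with $\alpha_e(z)=[z,e]/\ell(e)$ is exactly how the paper embeds $H_1$ in $C_1(G,\RR)$. The problem is Step~2, which does not hold as written, and the gap is not merely technical.

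First, there is no ``drift''. The Voronoi polytope $\Vor(0)$ is centrally symmetric (it equals $\tfrac12\Z(\{\pi(e)\})$), and $\alpha_e$ is a linear functional, so $\int_{\Vor(0)}\alpha_e\,d\mu_L=0$ for every edge. Nothing about the base vertex $q$ enters the definition of $\Vor(0)$ or of $\alpha_e$; the appearance of $q$ in the final formula does \emph{not} come from a shift in the one-dimensional marginals. Second, the marginal of $\mu_L$ under $\alpha_e$ is not uniform on an interval of length $\F(e)$. Already for the cycle $C_n$ with unit edge lengths one has $\F(e)=1/n$, but the range of $\alpha_e$ on $\Vor(0)=[-\tfrac12,\tfrac12]\cdot\gamma$ is the full interval $[-\tfrac12,\tfrac12]$; consequently $\int\alpha_e^2\,d\mu_L=1/12$, not $\F(e)^2/12=1/(12n^2)$. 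So both the ``variance'' and the ``mean-squared'' halves of your displayed identity are wrong, and they cannot be repaired by a different bookkeeping of the same idea: the edge marginals simply do not carry enough information.

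What the paper does instead is to decompose $\Vor(0)$ into parallelotopes $\sigma_T+\C_T$ indexed by spanning trees (Theorem~\ref{thm:vordecomp}), where the center $\sigma_T$ depends on the rooted orientation $\Tc_q$. On each cell the integral splits as $\int_{\C_T}[y,y]\,d\mu_L+\tfrac{w(T)}{w(G)}[\sigma_T,\sigma_T]$; the cross term vanishes because $\C_T$ is centrally symmetric. The first piece contributes $\tfrac{1}{12}\tfrac{w(T)}{w(G)}\sum_{e\notin T}\F(e)\ell(e)$, and the factor $\F(e)^2$ arises only after averaging over trees, since $\sum_{T\not\ni e}w(T)/w(G)=\F(e)$; it is \emph{not} the variance of a length-$\F(e)$ interval. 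The genuinely hard step---and the one your outline has no analogue for---is evaluating the weighted average $\sum_T \tfrac{w(T)}{w(G)}[\sigma_T,\sigma_T]$. This is where $q$ and the $j$-function finally enter, via the ``energy level'' $\boldsymbol{\varrho}(T,q)$ of a rooted tree and the random-spanning-tree identities of \S\ref{sec:random}--\S\ref{sec:crossavg} (Theorems~\ref{thm:energylevel} and~\ref{thm:avgcenters}). Any correct proof has to produce a substitute for that averaging computation; your proposal does not.
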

The formula in Theorem~\ref{thm:momentIntro} seems to be rather unexpected from the point of view of computational complexity. As follows from our discussion in \S\ref{sec:pot}, in order to obtain all ingredients of the formula, one essentially only needs to perform some basic operations on the discrete Laplacian matrix of $G$. 
It follows that, given a model $G$ of $\Gamma$, one can compute $I(\Jac(\Gamma))$ in time $O(n^\omega)$, where $n$ is the number of vertices of $G$, and where $\omega$ is the exponent for the `matrix multiplication' algorithm (currently $\omega < 2.38$). We refer to Remark~\ref{rmk:I}~(ii) for details. 

On the other hand, computing tropical moments for general polarized real tori is expected to be NP-complete (or worse) in the hierarchy of complexity classes, and there seems to be no {\em a priori} reason to expect that the computation might be easier for tori arising from metric graphs. For example,  if one wants to compute the tropical moment via the `simplex method' (see, e.g., \cite[Chapter 21, \S2]{cs}), then one needs to know the vertices of $\Vor(0)$. However, as is shown in \cite{ssv}, even computing the {\em number} of vertices of $\Vor(0)$ is already $\#$P-hard.

The proof of Theorem~\ref{thm:momentIntro} is very subtle. Our strategy is as follows. To handle the integral in (\ref{eqn:integral_expr}), we provide an explicit polytopal decomposition of $\Vor(0)$. Given a base point $q\in V(G)$, there is a full-dimensional polytope $\sigma_T + \C_T$ in our decomposition attached to each {\em spanning tree} $T$ of $G$. Here $\C_T$ is a centrally symmetric polytope, which makes $\sigma_T$ into the center of $\sigma_T + \C_T$ (see Theorem~\ref{thm:vordecomp}). The desired integral over $\Vor(0)$ is then a sum of integrals over each $\sigma_T + \C_T$. 

The contribution from the centrally symmetric polytopes $\C_T$ is rather easy to handle. The real difficulty comes in handling the contributions from the centers $\sigma_T$. We introduce the notion of {\em energy level} of rooted spanning trees (see Definition~\ref{def:enlev}). A crucial ingredient is the notion of {\em cross ratio} introduced in \cite{FR2} for electrical networks. We prove that the weighted average of energy levels over all spanning trees has a remarkably simple expression in terms of values of the $j$-function (see Theorem~\ref{thm:energylevel}). Proving this, in turn, uses some subtle computations related to functions arising from {\em random spanning trees} (see \S\ref{sec:random}). We also repeatedly use our generalized (and quantitative) version of Rayleigh's law in electrical networks, as developed in the companion paper \cite{FR2}.

\subsection{Tropical moment and tau invariant}
Fix a point $q \in \Gamma$. Let $f(x) = \frac{1}{2} r(x, q)$, where $r$ denotes the effective resistance on $\Gamma$ as above. The tau invariant of $\Gamma$ can be given by the simple formula  (see Definition  \ref{def:tau})
\[ \tau(\Gamma)= \int_{\Gamma}{\left(f'(x)\right)^2 \d x} \, .
\]
Here $\d x$ denotes the (piecewise) Lebesgue measure on $\Gamma$. We will prove the following remarkably simple linear identity connecting $\tau(\Gamma)$ and $I(\Jac(\Gamma))$. We refer to Definition \ref{def:total_length} for the {\em total length} of $\Gamma$.
\begin{Theorem}\label{thm:linearIntro} (=Theorem~\ref{thm:linear})
Let $\Gamma$ be a metric graph. Let $\tau(\Gamma)$ denote the tau invariant of~$\Gamma$, and let $\ell(\Gamma)$ denote its total length. Then the identity
\[ \frac{1}{2}\tau(\Gamma)+I(\Jac(\Gamma)) = \frac{1}{8} \ell(\Gamma) \]
holds in $\RR$.
\end{Theorem}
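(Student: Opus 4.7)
The plan is to derive an explicit edge-wise formula for $\tau(\Gamma)$ and combine it with the formula for $I(\Jac(\Gamma))$ from Theorem~\ref{thm:momentIntro}. Fix a vertex $q$ of the model $G$ (so that $q$ does not lie in the interior of any edge), and parameterize each edge $e=\{u,v\}$ by arc length $t\in[0,\ell(e)]$ from $u$, writing $s=t/\ell(e)$. A basic computation in network theory --- cut $e$ at the interior point $x$ and combine in series/parallel with the rest of the network --- yields
\[
r(x(t), q) \;=\; (1-s)\,r(u,q) + s\,r(v,q) + s(1-s)\,\ell(e)\,\F(e);
\]
that is, $r(\,\cdot\,,q)$ differs from the linear interpolation between $r(u,q)$ and $r(v,q)$ by a quadratic correction whose coefficient is exactly $\ell(e)\F(e)$.

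Differentiating in $t$, squaring, and integrating over $e$, the cross term vanishes because $\int_0^1(1-2s)\,\d s=0$, and one obtains
\[
\int_e \bigl(f'(x)\bigr)^2\,\d x \;=\; \frac{(r(u,q)-r(v,q))^2}{4\ell(e)} \;+\; \frac{\ell(e)\F(e)^2}{12}.
\]
Summing over edges gives a closed formula for $\tau(\Gamma)$ in terms of resistances and Foster coefficients.

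The last step is to substitute into $\tfrac12\tau(\Gamma)+I(\Jac(\Gamma))$. The bridge between the $j$-function and resistances is the identity $j_z(x,y)=\tfrac12(r(x,z)+r(y,z)-r(x,y))$, which gives $j_u(v,q)+j_v(u,q)=r(u,v)$ and $j_u(v,q)-j_v(u,q)=r(u,q)-r(v,q)$, hence
\[
j_u(v,q)^2 + j_v(u,q)^2 \;=\; \tfrac12\bigl(r(u,v)^2 + (r(u,q)-r(v,q))^2\bigr).
\]
Substituting this, together with the expansion $\ell(e)\F(e)^2 = \ell(e) - 2r(u,v) + r(u,v)^2/\ell(e)$ coming from $\F(e)=1-r(u,v)/\ell(e)$, into the combined expression, the $(r(u,q)-r(v,q))^2/\ell(e)$ contributions cancel exactly between the $\tau$-term and the $j$-term, and the remaining $r(u,v)$ and $r(u,v)^2/\ell(e)$ pieces telescope, leaving $\tfrac18\sum_e\ell(e)=\tfrac18\ell(\Gamma)$. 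The main obstacle is purely algebraic bookkeeping; the conceptual content of the theorem is that, although both $\tau(\Gamma)$ and $I(\Jac(\Gamma))$ individually involve the $j$-function values in an intricate, $q$-dependent fashion, their linear combination collapses to the trivial invariant $\tfrac18\ell(\Gamma)$.
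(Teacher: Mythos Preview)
Your argument is correct and follows essentially the same route as the paper: the edge-wise formula for $\tau(\Gamma)$ you derive is exactly Theorem~\ref{thm:tauformula} (proved there via the same parametrization, which is Example~\ref{ex:eff}), and the subsequent cancellation uses the same $j$-function identities (your $j_u(v,q)+j_v(u,q)=r(u,v)$ and $j_u(v,q)-j_v(u,q)=r(u,q)-r(v,q)$ are \eqref{eq:jid3} and \eqref{eq:jid1}). The only cosmetic difference is that the paper organizes the final algebra via $(j_u-j_v)^2 - 2(j_u^2+j_v^2) = -(j_u+j_v)^2$, whereas you expand $j_u^2+j_v^2$ first; the content is identical.
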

 In the next two sections, we would like to argue that the linear relation in Theorem~\ref{thm:linearIntro} is a non-archimedean analogue of a recent remarkable identity established by Wilms \cite{Wilms} for invariants of compact Riemann surfaces. 
 
\subsection{Tropical moment as a tropical limit} \label{sec:trop_limit}

A polarized real torus is a real torus equipped with a Euclidean inner product on its tangent space. For each given polarized real torus $\T$ one clearly has an associated Voronoi polytope $\Vor(0)$ centered at the origin, as well as an associated tropical moment $I(\T)$. Following \cite{mz, FRSS} one can also naturally attach to $\T$ a {\em tropical Riemann theta function} $\varPsi(z)$, which is a real-valued piecewise affine function on the tangent space of $\T$ (see \S\ref{sec:realtori}). By modifying $\varPsi$ into the function $\|\varPsi\|(z)=\varPsi(z) + \frac{1}{2}[z,z]$ one obtains a lattice-invariant function, which therefore descends to give a map $\|\varPsi\| \colon \T \rightarrow \RR$.  It is not difficult to see that the tropical moment of $\T$ can alternatively be written as
\[ I(\T) = \int_{\T} 2 \,\| \varPsi \| \, \d \mu_{H}  \, , \]
where $\mu_H$ denotes the Haar measure on $\T$, normalized to give $\T$ unit volume. 

Now let $A$ be a complex abelian variety, endowed with a principal polarization $\lambda \colon A \isom A^t$. Let $\|\vartheta\|$ denote the normalized version of the {\em complex Riemann theta function} on the compact complex torus $A(\CC)$, as defined in \cite[page~401]{faltCalc}. Let $\mu$ denote the Haar measure on $A(\CC)$, normalized to give $A(\CC)$ unit volume. The {\em $I$-invariant}  of the pair $(A,\lambda)$, introduced by Autissier \cite{aut}, is the integral 
\begin{equation} \label{def_arch_lambda_bis}
I(A,\lambda) = -\int_{A(\CC)} \log \|\vartheta\|  \, \d \, \mu - \frac{g}{4} \log 2 \, .  
\end{equation}
Here and below we denote by $\log$ the natural logarithm, and we set $g=\dim(A)$. 

In \cite{FRSS} the  tropical Riemann theta function $\varPsi$  is obtained as the {\em tropicalization} of a {\em non-archimedean Riemann theta function}. This suggests that one may see $\varPsi$ as the {\em `tropical limit'} of the complex Riemann theta function, and  the modified tropical Riemann theta function $\|\varPsi\|$ as the `tropical limit' of the normalized complex Riemann theta function $\|\vartheta\|$. This suggests, in turn, that the tropical moment of a polarized real torus may be seen as the `tropical limit' of (two times) the $I$-invariant of a principally polarized complex abelian variety.

\subsection{Connection with an identity of Wilms} \label{sec:wilms}

Let $C$ be a compact and connected Riemann surface of genus $g\geq 1$. Let $\delta_F(C)$ be the {\em Faltings delta invariant} of $C$ (see \cite[page 402]{faltCalc}), let $\varphi(C)$ be the {\em Zhang--Kawazumi invariant} of $C$ (see \cite[page 6]{ZhGrSch}), and let $I(\Jac(C))$ be the $I$-invariant of the Jacobian $\Jac(C)$ of $C$, seen as a principally polarized complex abelian variety. Put $\kappa_0=\log(\pi \sqrt{2})$.  Wilms recently established the following remarkable identity.

\vspace{2mm}
\noindent {\bf Theorem.} (\cite[Theorem~1.1]{Wilms})
The identity 
\begin{equation} \label{eq:wilmsintro}
\delta_F(C)  - 2 \, \varphi(C) = 12 \, \left(2 \, I(\Jac(C))\right) + a(g)
\end{equation}
holds in $\RR$, where $a(g) = \left(4\log(2\pi)- 12\,\kappa_0 \right)g$ is a constant depending only on $g$.
\vspace{2mm}

A {\em polarized} metric graph $\overline{\Gamma}$ (or an {\em abstract tropical curve}) is a pair $(\Gamma, \mathbf{q})$ consisting of a metric graph $\Gamma$ and a labeling $\mathbf{q} \colon V(G) \to \ZZ_{\geq 0}$ of the vertices in a model $G$ of $\Gamma$. Let $\varepsilon(\overline{\Gamma})$ denote the $\varepsilon$-invariant of $\overline{\Gamma}$ (see  \cite{Zhang93, Moriwaki}), and let $\varphi(\overline{\Gamma})$ denote the $\varphi$-invariant of $\overline{\Gamma}$ (see  \cite[page~7]{ZhGrSch}). 
The main result of \cite{deJongFaltings} shows that the tropical invariant $\ell(\Gamma) + \varepsilon(\overline{\Gamma})$ can be seen as the `tropical limit' of the delta invariant $\delta_F(C)$, and the (partial) asymptotic results in \cite{deJongAsymptotic, deJongPointlike} indicate that the tropical invariant $\varphi(\overline{\Gamma})$ can be seen 
as the `tropical limit' of $\varphi(C)$. In \S\ref{sec:trop_limit} we argued that the tropical moment should be seen as the `tropical limit' of two times the $I$-invariant.

Therefore, a non-archimedean analogue of Wilms's identity \eqref{eq:wilmsintro} should have the form
 \begin{equation}\label{eq:nonarch_wilmsintro}
 \left(\ell(\Gamma) + \varepsilon(\overline{\Gamma})\right) - 2 \,\varphi(\overline{\Gamma}) = 12 \, I(\Jac(\Gamma))  \, .
 \end{equation}
It is shown in \cite[Proposition~9.2]{djneron} that the identity
\begin{equation}
 \left( \ell(\Gamma) +\varepsilon(\overline{\Gamma}) \right) - 2\,\varphi(\overline{\Gamma}) = 12\,\left(\frac{1}{8} \ell(\Gamma) - \frac{1}{2} \tau(\Gamma)\right) 
\end{equation}
holds. We see that \eqref{eq:nonarch_wilmsintro} holds if and only if $\frac{1}{8} \ell(\Gamma) - \frac{1}{2} \tau(\Gamma) = I(\Jac(\Gamma))$. In other words, our Theorem~\ref{thm:linearIntro} precisely establishes a non-archimedean analogue of Wilms's identity. 

\subsection{Connection with arithmetic geometry} \label{sec:connection}

As is well known, a metric graph may be canonically interpreted as a {\em skeleton} of a {\em Berkovich curve}, and the tropical Jacobian of a metric graph can be interpreted as the {\em canonical skeleton} of a {\em Berkovich Jacobian variety} \cite{brab}.  More generally, one may view the canonical skeleton of any Berkovich polarized abelian variety as a polarized real torus in a canonical way, via {\em non-archimedean uniformization}. Based on this connection, our results can be applied in the study of polarized abelian varieties. For example, we have the following application concerning the computation of {\em Arakelov heights} attached to principally polarized abelian varieties defined over a number field. For more background and for terminology used in this section we refer to \cite{FR1}.

Let $k$ be a number field, and let $M(k)_0$ and $M(k)_\infty$ denote the set of non-archimedean places and the set of complex embeddings of $k$.  Let $(A,\lambda)$ be a principally polarized abelian variety defined over $k$. Assume that $A$ has semistable reduction over $k$.  For $v \in M(k)_\infty$ we let $I(A_v,\lambda_v)$ denote the $I$-invariant of the principally polarized complex abelian variety $(A_v,\lambda_v)$ obtained by extending scalars to $\bar{k}_v \simeq \CC$.  For $v \in M(k)_0$ we let $I(A_v,\lambda_v)$ denote the tropical moment of the canonical skeleton of the Berkovich analytification of $(A,\lambda)$ at $v$, viewed as a polarized real torus. Let $Nv$ be the cardinality of the residue field at $v \in M(k)_0$. 

\vspace{2mm}
\noindent {\bf Theorem.} (\cite[Theorem~A]{FR1})
 Let $\varTheta$ be a symmetric effective divisor on $A$ that defines the polarization $\lambda$, and put $L=\mathcal{O}_A(\varTheta)$. Let $\h'_L(\varTheta)$ denote the {\em N\'eron--Tate height} of the cycle $\varTheta$, and let $\h_F(A)$ denote the {\em stable Faltings height} of $A$. Set $g=\dim(A)$. Then the equality
\begin{equation} \label{two_heights}
 \h_F(A) =  2g \, \h'_L(\varTheta) -   \kappa_0 \, g + \frac{1}{[k:\QQ]} \left( \sum_{v \in M(k)_0} I(A_v,\lambda_v) \log Nv + 2 \sum_{v \in M(k)_\infty} I(A_v,\lambda_v)   \right) 
 \end{equation}
holds in $\RR$.
\vspace{2mm}

Assume that $v \in M(k)_0$ is a finite place such that the canonical skeleton of the Berkovich analytification of $(A,\lambda)$ at $v$ can be realized as the tropical Jacobian of some (explicitly given) metric graph. For example $(A,\lambda)$ could be the Jacobian variety of a smooth projective geometrically connected curve  with semistable reduction over $k$. Then Theorem~\ref{thm:momentIntro} or Theorem~\ref{thm:linearIntro} can be applied to compute the local term  $I(A_v,\lambda_v)$ efficiently. We shall illustrate this in \S\ref{sec:arakelov} by discussing the case of Jacobian varieties of dimension two in some detail. We mention that by combining (\ref{two_heights}) with Theorem \ref{thm:linearIntro} one recovers \cite[Theorem~1.6]{djneron}.

\subsection{Structure of the paper}

In \S\ref{sec:realtori}, we review the notion of polarized real tori and define the notion of tropical moments. 
In \S\ref{sec:metric}, we review the notions of weighted graphs and of metric graphs and their models. Also we introduce the tropical Jacobian of a metric graph.
In \S\ref{sec:pot}, we review potential theory and harmonic analysis on metric graphs, mainly from the perspective of our companion paper \cite{FR2}.
In \S\ref{sec:random}, we study two functions that arise from the theory of random spanning trees.  
In \S\ref{sec:crossavg}, we introduce the notion of energy levels of rooted spanning trees, and prove that the average of energy levels has a simple expression in terms of the $j$-function.
In \S\ref{sec:comb}, we study the combinatorics of the Voronoi polytopes arising from graphs, and present our suitable polytopal decomposition. 
In \S\ref{sec:tropmoment}, we prove Theorem~\ref{thm:momentIntro}.
In \S\ref{sec:arakelov}, we elaborate upon (\ref{two_heights}) in the case of a Jacobian variety of dimension two. 
In \S\ref{sec:tau}, we introduce the tau invariant and prove a formula that allows to compute the tau invariant efficiently.
In \S\ref{sec:linearrel}, we prove Theorem~\ref{thm:linearIntro}.

\renewcommand*{\theTheorem}{\arabic{section}.\arabic{Theorem}}

\section{Polarized real tori and tropical moments}
\label{sec:realtori}

The purpose of this section is to set notations and terminology related to polarized real tori and their tropical moments, and to discuss the connection with the tropical Riemann theta function as studied in \cite{mz, FRSS}.

\subsection{Polarized real tori} \label{sec:poltori}
A {\em (Euclidean) lattice} is a pair $(\Lambda, [ \cdot,\cdot  ])$ consisting of a finitely generated free $\ZZ$-module $\Lambda$ and a  symmetric bilinear form $[ \cdot,\cdot  ] \colon \Lambda \times \Lambda \to \RR$ such that the induced symmetric bilinear form on $\Lambda_\RR=\Lambda \otimes_\ZZ \RR$ (which we likewise denote by $[\cdot,\cdot]$) is positive definite. Attached to each lattice $(\Lambda,[\cdot,\cdot])$ one has a real torus $\T=\Lambda_\RR/\Lambda$, equipped with a natural structure of compact Riemannian manifold. We refer to the Riemannian manifold $\T$ as a {\em polarized real torus}. The tropical Jacobian of a metric graph (see Section \S\ref{sec:trop_Jac}) is an example of a polarized real torus.

\subsection{Voronoi decompositions and tropical moment} \label{sec:voronoi}

Let $\T$ be a polarized real torus coming from a lattice $(\Lambda,[\cdot,\cdot])$ as above.  For each $\lambda \in \Lambda$ we denote by $\Vor( \lambda)$ the Voronoi polytope of the lattice $(\Lambda,[ \cdot,\cdot  ])$ around $\lambda$:
\[ \Vor(\lambda) \coloneqq \{ z \in \Lambda_{\RR} \, \colon  \, [ z - \lambda , z -\lambda ] \leq [ z-\lambda',z-\lambda' ] \  \textrm{for all} \, \lambda' \in \Lambda \} \, . \]
Note that, for each $\lambda \in \Lambda$, we have $\Vor(\lambda) = \Vor(0) + \lambda$. Moreover $\Vor(0)$, up to some identifications on its boundary, is a fundamental domain for the translation action of $\Lambda$ on $\Lambda_{\RR}$.
\begin{Definition} The {\em tropical moment} of the polarized real torus $\T$ is set to be the value of the integral
\begin{equation} \label{normalized_second}
I(\T)  \coloneqq \,  \int_{\Vor(0)} [ z,z ] \, \d \mu_L(z) \, ,  
\end{equation}
where $\mu_L$ is the Lebesgue measure on $\Lambda_{\RR}$, normalized to have $\mu_L(\Vor(0)) = 1$.
\end{Definition}
\begin{Remark} The tropical moment of $\T$ is exactly the {\em normalized second moment} of the lattice $(\Lambda,[\cdot,\cdot])$ as studied in \cite[Chapter 21]{cs}.
\end{Remark}

\subsection{Tropical Riemann theta function} \label{sec:thetafunc}
Let $\T$ be a polarized real torus coming from a lattice $(\Lambda,[\cdot,\cdot])$ as above. 
We define the \emph{tropical Riemann theta function} of  $\T$ to be the function $\varPsi \colon \Lambda_\RR \rightarrow \RR$ given by (see \cite{mz, FRSS}) 
\[ \varPsi(z) \coloneqq \min_{\lambda \in \Lambda} \left\{  [ z, \lambda ] + \frac{1}{2}[ \lambda,\lambda ] \right\} \]
for $z \in \Lambda_\RR$. 
 As is easily checked we have a functional equation
\[ \varPsi(z) = \varPsi(z+\mu) + [ z , \mu ] + \frac{1}{2} [ \mu,\mu ] \]
for all $z \in \Lambda_{\RR}$ and $\mu \in \Lambda$. The function $\varPsi$ is piecewise affine on $\Lambda_{\RR}$.  The modified theta function
\begin{equation} \label{eqn:modified} \|\varPsi\|(z) \coloneqq \varPsi(z) + \frac{1}{2}[ z,z ] 
\end{equation}
on $\Lambda_{\RR}$ is $\Lambda$-invariant and hence descends to $\T$. Explicitly, we have
\[ 2\,\|\varPsi\|(z) =  \min_{\lambda \in \Lambda}  \, [ z-\lambda,z-\lambda ] \]
for all $z \in \Lambda_{\RR}$. As is easy to check, the tropical moment (\ref{normalized_second}) has a simple expression in terms of the modified theta function (\ref{eqn:modified}), namely 
\begin{equation} \label{eq:tropmoment} I(\T)  = \int_{\T} 2 \,\| \varPsi \| \, \d \mu_{H}  \, . 
\end{equation}
Here $\mu_H$ denotes the Haar measure on the compact topological group $\T$, normalized to have $\mu_H(\T) =1$. 
\begin{Remark}
The corner locus of the tropical Riemann theta function $\varPsi$ (i.e. the lift of the {\em tropical theta divisor} on $\T$ to its tangent space  $\Lambda_{\RR}$) consists precisely of the boundaries of the Voronoi polytopes $\Vor(\lambda)$.
\end{Remark}

\section{Metric graphs, models, and tropical Jacobians}
\label{sec:metric}

The purpose of this section is to set notations and terminology related to weighted graphs, metric graphs, and their models. We also define the tropical Jacobian of a metric graph (see \S\ref{sec:trop_Jac}). Most of the material in this section is straightforward, and we leave details to the interested reader.

\subsection{Weighted graphs}\label{sec:weightedgraph}
By a {\em weighted graph} we mean a finite weighted connected multigraph $G$ with no loop edges. The set of vertices of $G$ is denoted by $V(G)$ and the set of edges of $G$ is denoted by $E(G)$. We let $n = |V (G)|$ and $m = |E(G)|$. An edge $e$ is called a {\em bridge} if $G\backslash e$ is disconnected. The weights of edges are determined by a {\em length} function
$ \ell \colon E(G) \to \RR_{>0}$.
We let $\mathbb{E}(G) = \{e, \bar{e} \colon e \in E(G)\}$ denote the set of {\em oriented edges}. We have $\bar{\bar{e}} = e$. For each subset $\MA \subseteq \EE(G)$, we define $\overline{\MA} = \{\bar{e} \colon e \in \MA\} $. An {\em orientation} $\Oc$ on $G$ is a partition $\mathbb{E}(G) = \Oc \cup \overline{\Oc}$. We have an obvious extension of the length function 
$\ell \colon \mathbb{E}(G) \rightarrow  \RR_{>0}$
by requiring $\ell(e) = \ell(\bar{e})$. There is a natural map
$\mathbb{E}(G) \rightarrow V(G) \times V(G)$ sending an oriented edge $e$ to $(e^+, e^-)$, where $e^- $ is the start point of $e$ and $e^+$ is the end point of $e$.

\begin{Notation}
For $e \in E(G)$ we sometimes refer to its
endpoints by $e^+,e^-$ even when an orientation is not fixed, so $e = \{e^+, e^-\}$. We only allow ourselves to do this if the underlying expression is symmetric with respect to $e^+$ and $e^-$, so there is no danger of confusion. The reader is welcome to fix an orientation $\Oc$ and think of $e^+$ and $e^-$ in the sense explained above.
\end{Notation}

A {\em spanning tree} $T$ of $G$ is a maximal subset of $E(G)$ that contains no circuit (closed simple path). Equivalently, $T$ is a minimal subset of $E(G)$ that connects all vertices of $G$. 

For a fixed $q \in V(G)$ and spanning tree $T$ of $G$ we will refer to the pair $(T, q)$ as a {\em spanning tree with a root at $q$} (or just a {\em rooted spanning tree}). The choice of $q$ imposes a preferred orientation on all edges of $T$. Namely, one can require that all edges are oriented {\em away from $q$} on the spanning tree $T$ (see Figure~\ref{fig:graphs}~(c)). We denote this orientation on $T$ by $\Tc_q \subseteq \mathbb{E}(G)$. 

Given a commutative ring $R$, it is convenient to define the $1$-chains with coefficients in $R$ as the free module

\[
C_1(G,R) \coloneqq \frac{\bigoplus_{e \in \mathbb{E}(G)} R e }{ \langle e+\bar{e} \colon e \in \Oc \rangle}  \,.
\]
Note that $\bar{e} = -e$ in $C_1(G,R)$. For each orientation $\Oc$ on $G$ we have an isomorphism $ C_1(G,R) \simeq \bigoplus_{e \in \Oc}  R e$. 
For each subset $\MA \subseteq \EE(G)$, we define its {\em associated $1$-chain} as 
$\boldsymbol{\gamma}_\MA = \sum_{e \in \MA} e$.

\subsection{Metric graphs and models} 
 A {\em metric graph} (or {\em metrized graph}) is a pair $(\Gamma,d)$ consisting of a compact connected topological graph $\Gamma$, together with an {\em inner metric} $d$. 
Equivalently, if $\Gamma$ is not a one-point space, then a metric graph is a compact connected metric space $\Gamma$ which has the property that every point has an open neighborhood isometric to a star-shaped set, endowed with the path metric.

The points of $\Gamma$ that have valency different from $2$ are called {\em branch points} of $\Gamma$. A {\em vertex set} for $\Gamma$ is a finite set $V$ of points of $\Gamma$ containing all the branch points of $\Gamma$ with the property that for each connected component $c$ of $\Gamma \setminus V$, the closure of $c$ in $\Gamma$ is isometric with a closed interval. 

A vertex set $V$ for $\Gamma$ naturally determines a weighted graph $G$  by setting $V(G)=V$, and by setting $E(G)$ to be the set of connected components of $\Gamma \setminus V$. We call $G$ a {\em model} of $\Gamma$. An {\em edge segment} (based on the choice of a vertex set $V$) is the closure in $\Gamma$ of a connected component of $\Gamma \setminus V$. Note that there is a natural bijective correspondence between $E(G)$ and the edge segments of $\Gamma$ determined by $V$. By a small abuse of terminology we will refer to the elements of $E(G)$ also as edge segments of $\Gamma$. Given an edge segment $e\subset \Gamma$ (based on the choice of a vertex set $V$) we denote its boundary $\partial e \subset V$ by $\partial e =\{e^-,e^+\}$. In particular we will also use the notation $\{e^-,e^+\}$ for the boundary set of an edge segment $e$ if there is no (preferred) orientation present. We hope that this does not lead to confusion. 

Conversely, every weighted graph $G$ naturally determines a metric graph $\Gamma_G$ containing $V(G)$ by glueing closed intervals $[0,\ell(e)]$ for $e \in E(G)$ according to the incidence relations. Note that $V(G)$ is naturally a vertex set of $\Gamma_G$, and the associated model is precisely $G$. See Figure~\ref{fig:graphs}~(a) and (b).

\begin{figure}[h!]
$$
\begin{xy}
(0,0)*+{
	\scalebox{.7}{$
	\begin{tikzpicture}
	\draw[black, ultra thick, -] (0,1.2) to [out=-45,in=90] (.6,-.05);
	\draw[black, ultra thick] (.6,0) to [out=-90,in=45] (0,-1.2);
	\draw[black, ultra thick, -] (0,1.2) to [out=-135,in=90] (-.6,-.05);
	\draw[black, ultra thick] (-.6,0) to [out=-90,in=135] (0,-1.2);
	\draw[black, ultra thick, -] (0,1.2) -- (0,0);
	\draw[black, ultra thick] (0,0.1) -- (0,-1.2);
	\end{tikzpicture}
	$}
};
(-6.5,0)*+{\mbox{{\smaller $4$}}};
(2,-1)*+{\mbox{{\smaller $1$}}};
(6.75,-0.05)*+{\mbox{{\smaller $4$}}};
(0,-15)*+{(a)};
\end{xy}
\ \ \ \ \ \ \ \ \ \ 
\begin{xy}
(0,0)*+{
	\scalebox{.7}{$
	\begin{tikzpicture}
	\draw[black, ultra thick] (-1.2,0) to (-.6,.6);
	\draw[black, ultra thick] (-.6,.6) to (0,1.2);
	\draw[black, ultra thick] (-1.2,0) to (0,0);
	\draw[black, ultra thick] (0,0) to (1.2,0);
	\draw[black, ultra thick] (-1.2,0) to (-.6,-.6);
	\draw[black, ultra thick] (-.6,-.6) to (0,-1.2);
	\draw[black, ultra thick] (0,1.2) to (.6,.6);
	\draw[black, ultra thick] (.6,.6) to (1.2,0);
	\draw[black, ultra thick] (1.2,0) to (.6,-.6);
	\draw[black, ultra thick] (.6,-.6) to (0,-1.2);
	\fill[black] (0,1.2) circle (.1);
	\fill[black] (0,-1.2) circle (.1);
	\fill[black] (1.2,0) circle (.1);
	\fill[black] (-1.2,0) circle (.1);
	\end{tikzpicture}
	$}
};
(-6.5,7)*+{\mbox{{\smaller $2$}}};
(0,-2)*+{\mbox{{\smaller $1$}}};
(6.5,7)*+{\mbox{{\smaller $2$}}};
(-6.5,-7)*+{\mbox{{\smaller $2$}}};
(6.5,-7)*+{\mbox{{\smaller $2$}}};
(0,-15)*+{(b)};
\end{xy}
\ \ \ \ \ \ \ \ \ \ 
\begin{xy}
(0,0)*+{
	\scalebox{.7}{$
	\begin{tikzpicture}
	\draw[black, ultra thick] (-1.2,0) to (-.6,-.6);
	\draw[black, ultra thick, ->] (0,-1.2) to (-.6,-.6) ;
	\draw[black, ultra thick] (0,1.2) to (.6,.6);
	\draw[black, ultra thick, ->] (1.2,0) to (.6,.6);
	\draw[black, ultra thick, ->] (1.2,0) to (.6,-.6);
	\draw[black, ultra thick] (.6,-.6) to (0,-1.2);
	\fill[black] (0,1.2) circle (.1);
	\fill[black] (0,-1.2) circle (.1);
	\fill[black] (1.2,0) circle (.1);
	\fill[black] (-1.2,0) circle (.1);
	\end{tikzpicture}
	$}
};
(11,0)*+{\mbox{{\smaller $q$}}};
(0,-15)*+{(c)};
\end{xy}
\ \ \ \ \ \ \ \ \ \ 
\!\!\!\!\!\!\!\!\!\!\!\!\!\!\!\!\!\!\!
$$
\captionsetup{singlelinecheck=off}
\caption[]{
\begin{itemize} 
\item[(a)] A metric graph $\Gamma$.
\item[(b)] A weighted graph model $G$ of $\Gamma$. 
\item[(c)] A rooted spanning tree $(T, q)$ of $G$ and the orientation $\Tc_q$.
\end{itemize}}
\label{fig:graphs}
\end{figure}

\subsection{Tropical Jacobians}\label{sec:trop_Jac}

Let $\Gamma$ be a metric graph. Fix a model $G$ of $\Gamma$. Let $\Oc=\{e_1,\ldots,e_m\}$ be a labeling of an orientation $\Oc$ on $G$. The real vector space $ C_1(G,\RR) \simeq \bigoplus_{i=1}^m  \RR e_i$ has a canonical inner product  defined by $[ e_i,e_j ]=\delta_i(j)\ell(e_i)$. Here $\delta_i$ denotes the delta (Dirac) measure on $\{ 1, 2, \ldots, m\}$ centered at $i$. The resulting inner product space $\left( C_1(G,\RR), [\cdot , \cdot] \right)$ is independent of the choice of $\Oc$ and its labeling. 

The inner product $[ \cdot,\cdot  ]$ restricts to an inner product, also denoted by $[ \cdot,\cdot  ]$, on the homology lattice $\Lambda  =H_1(G,\ZZ) \subset C_1(G,\ZZ)$. The pair $(\Lambda,[ \cdot,\cdot  ])$ is a canonical lattice associated to $\Gamma$ (independent of the choice of the model $G$), and we have a canonical identification $\Lambda \simeq H_1(\Gamma,\ZZ)$. Note that $\Lambda_{\RR} \simeq H_1(\Gamma,\RR)$. The associated polarized real torus $H_1(\Gamma,\RR)/H_1(\Gamma,\ZZ)$  is called the {\em tropical Jacobian} of $\Gamma$ (\cite{ks, mz}), and denoted by $\Jac(\Gamma)$. 

\section{Potential theory on metric graphs} \label{sec:pot}
In this section, we closely follow \cite{FR2} and review those results that are needed in this paper.

\subsection{Graphs as electrical networks}
Let $\Gamma$ be a metric graph and $G$ be a model of $\Gamma$. We may think of $\Gamma$ (or $G$) as an electrical network in which each edge $e \in E(G)$ is a resistor having resistance $\ell(e)$. See Figure~\ref{fig:electrical}.

\begin{figure}[h!]
$$
\begin{xy}
(0,0)*+{
	\scalebox{.7}{$
	\begin{tikzpicture}
	\draw[black] (-1.2,0) to[R=$2$] (0,1.2);
	\draw[black] (-1.2,0) to[R=$1$] (1.2,0);
	\draw[black] (0,-1.2) to[R=$2$] (-1.2,0);
	\draw[black] (0,1.2) to[R=$2$] (1.2,0);
	\draw[black] (1.2,0) to[R=$2$] (0,-1.2);
	\fill[black] (0,1.2) circle (.1);
	\fill[black] (0,-1.2) circle (.1);
	\fill[black] (1.2,0) circle (.1);
	\fill[black] (-1.2,0) circle (.1);
	\end{tikzpicture}
	$}
};
(0,-15)*+{N};
\end{xy}
\ \ \ \ \ \ \ \ \ \ 
\!\!\!\!\!\!\!\!\!\!\!\!\!\!\!\!\!\!\!
$$
\caption{The electrical network $N$ corresponding to the graphs in Figure~\ref{fig:graphs}~(a), (b).}\label{fig:electrical}
\end{figure}
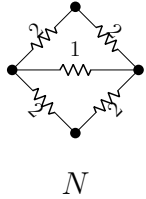

When studying the `potential theory' on a metric graph $\Gamma$, it is convenient to always fix an (arbitrary) model $G$, and think of it as an electrical network.

\subsection{Laplacians and $j$-functions} 
\label{sec:laplace}
Let $\Gamma$ be a metric graph, and let $G$ be a model of $\Gamma$.  
We have the distributional Laplacian operator (see \cite[\S3.1]{FR2})
\[\Delta \colon \PL(\Gamma) \rightarrow \dmeasz(\Gamma)\, ,\]
where $\PL(\Gamma)$ is the real vector space consisting of all continuous piecewise affine real valued functions on $\Gamma$ that can change slope finitely many times on each closed edge segment, and $\dmeasz(\Gamma)$ is the real vector space of discrete measures $\nu$ on $\Gamma$ with $\nu(\Gamma) = 0$. 
We also have the combinatorial Laplacian operator (see \cite[\S3.2]{FR2})
\[\Delta \colon \Mc(G) \rightarrow \dmeasz(G)\, ,\]
where $\Mc(G)$ is the real vector space of real-valued functions on $V(G)$, and $\dmeasz(G)$ is the real vector space of discrete measures $\nu$ on $V(G)$ with $\nu(V(G)) = 0$.
The distributional Laplacian $\Delta$ and the combinatorial Laplacian $\Delta$ are compatible in the sense described in \cite[\S3.3]{FR2}. 
Moreover, the combinatorial Laplacian on $G$ can be conveniently presented by its {\em Laplacian matrix}; let $\{v_1, \ldots , v_n\}$ be a labeling of $V(G)$. The Laplacian matrix $\Q$ associated to $G$ is the $n \times n$ matrix $\Q = (q_{ij})$ where, for $i \ne j$, we have 
$
q_{ij} = - \sum_{e =\{v_i , v_j\} \in E(G)}{{1}/{\ell(e)}} 
$.
The diagonal entries are determined by forcing the matrix to have zero-sum rows. 

The Laplacian matrix of $G$ can also be expressed in terms of the {\em incidence matrix} of $G$. Let $\{v_1, \ldots , v_n\}$ be a labeling of $V(G)$ as before. Fix an orientation $\Oc = \{e_1, \ldots, e_m\}$ on $G$. The incidence matrix $\B$ associated to $G$ is the $n \times m$ matrix $\B=(b_{ij})$, where 
$b_{ij} = +1$ if $e_j^{+} = v_i$ and $b_{ij} = -1$ if $e_j^{-} = v_i$ and $b_{ij} = 0$ otherwise. Let $\D$ denote the $m \times m$ diagonal matrix with diagonal entries $\ell(e_i)$ for $e_i \in \Oc$. We have $\Q = \B \D^{-1}\B^{\TP}$, 
where $(\cdot)^{\TP}$ denotes the matrix transpose operation.

A fundamental solution of the Laplacian is given by $j$-functions. We follow the notation of \cite{cr}. See \cite[\S4]{FR2} and references therein for more details.
Let $\Gamma$ be a metric graph and fix two points $y,z \in \Gamma$. We denote by $j_z(\cdot \, , y ; \Gamma)$ the unique function in $\PL(\Gamma)$ satisfying:
(i) $\Delta \left(j_z(\cdot \, , y; \Gamma)\right )= \delta_y - \delta_z$, and 
(ii) $j_z(z,y; \Gamma) = 0$.
If the metric graph $\Gamma$ is clear from the context we write $j_z(x,y)$ instead of $j_z(x,y; \Gamma)$.
The $j$-function exists and is unique, and satisfies the following basic properties:
\begin{itemize}
\item[$\diamond$] $j_z(x,y)$ is jointly continuous in all three variables $x,y,z \in \Gamma$.
\item[$\diamond$] $j_z(x,y) = j_z(y,x)$.
\item[$\diamond$] $0 \leq j_z(x,y) \leq j_z(x,x)$.
\item[$\diamond$] $j_z(x,x) = j_x(z,z)$.
\end{itemize}

The {\em effective resistance} between two points $x, y \in \Gamma$ is
$r(x,y) \coloneqq j_y(x,x)$.
If we want to clarify the underlying metric graph $\Gamma$, we use the notation $r(x,y; \Gamma)$.

Let $G$ be an arbitrary model of $\Gamma$. One can explicitly compute the quantities $j_q(p,v) \in \RR$ for $q,p,v \in V(G)$ using linear algebra (see \cite[\S 3]{fm}) as follows. Fix a labeling of $V(G)$ as before, and let $\Q$ be the corresponding Laplacian matrix. 
Let $\Q_q$ be the $(n-1)\times(n-1)$ matrix obtained from $\Q$ by deleting the row and column corresponding to $q \in V(G)$ from $\Q$. It is well-known that $\Q_q$ is invertible. Let $\L_q$ be the $n \times n$ matrix obtained from $\Q_q^{-1}$ by inserting zeros in the row and column corresponding to $q$. One can easily check that 
$ \Q\L_q = \I + \R_q$, 
where $\I$ is the $n \times n$ identity matrix and $\R_q$ has all $-1$ entries in the row corresponding to $q$ and has zeros elsewhere. 
It follows from the compatibility of the two Laplacians that
$\L_q = (j_q(p,v))_{p,v \in V(G)}$. The matrix $\L_q$ is a {\em generalized inverse} of $\Q$, in the sense that $\Q\L_q\Q = \Q$. 

\begin{Remark} \label{rmk:computej}
Computing $\L_q$ takes time at most $O(n^\omega)$, where $\omega$ is the exponent for matrix multiplication (currently $\omega < 2.38$).
\end{Remark}

\subsection{Cross ratios}
Let $\Gamma$ be a metric graph and fix $q \in \Gamma$. As in \cite{FR2}, we define the {\em cross ratio} function (with respect to the base point $q$) $\xi_q \colon \Gamma^4 \rightarrow \RR$  by 
\[
\xi_q(x,y , z,w) \coloneqq j_q(x,z) +j_q(y,w) - j_q(x,w) - j_q(y,z) \,.
\]
If we want to clarify the graph $\Gamma$, we use the notation $\xi_q(x,y , z,w ; \Gamma)$ instead. Cross ratios satisfy the following basic properties:
\begin{itemize}
\item[$\diamond$] $\xi(x,y , z,w)  \coloneqq \xi_q(x,y , z,w)$ is independent of the choice of $q$.
\item[$\diamond$] $\xi(x,y , z,w) = \xi( z,w,x,y)$.
\item[$\diamond$] $\xi(y,x , z,w) = - \xi( x, y, z,w)$.
\item[$\diamond$] $\xi(x,y , z,w) = \langle \delta_x-\delta_y, \delta_z-\delta_w \rangle_{\en}$, where $\langle \cdot , \cdot \rangle_{\en}$ denotes the {\em energy pairing} on $\dmeasz(\Gamma)$ defined by
\begin{equation}\label{eq:en}
\langle \nu_1 , \nu_2 \rangle_{\en} \coloneqq \int_{\Gamma \times \Gamma} j_q (x,y) \d \nu_1(x) \d \nu_2(y) \, .
\end{equation}
\end{itemize}

\begin{Example}
The following identities will be useful for our computations.

It follows from $\xi_x(q,x,q,y) = \xi_y(q,x,q,y)$ that 
\begin{equation} \label{eq:jid1}
r(x, q) - r(y, q) = j_{x}(y , q) - j_{y}(x , q)  \, .
\end{equation}

 It follows from $\xi_x(x,y,x,q) = \xi_q(x,y,x,q)$ that 
\begin{equation}  \label{eq:jid2}
j_x(y,q) =  j_q(x,x) - j_q(x,y) \, .
\end{equation}

 It follows from $\xi_x(x,y,x,q) = \xi_y(x,y,x,q)$ that 
\begin{equation}  \label{eq:jid3}
r(x,y) =  j_{x}(y , q)+ j_y(x,q) \, .
\end{equation}

 It follows from $\xi_x(x,y,x,y) = \xi_q(x,y,x,y)$ that 
\begin{equation}  \label{eq:jid4}
r(x,y) =  j_{q}(x , x)+ j_q(y,y) - 2j_q(x,y) \, .
\end{equation}

\end{Example}

\subsection{Projections} \label{sec:proj}
Let $G$ be a weighted graph. Fix an orientation $\Oc$. Let $T$ be a spanning tree of $G$. 
The {\em weight} of $T$ is the product $
w(T) \coloneqq \prod_{e \not \in T}{\ell(e)}$. 
The {\em coweight} of $T$ is the product $
w'(T) \coloneqq  \prod_{e \in T}{\ell^{-1}(e)}$.
The {\em weight} and {\em coweight} of $G$ are
$w(G) \coloneqq \sum_{T}{w(T)}$ and $w'(G) \coloneqq \sum_{T}{w'(T)}$,
where the sums are over all spanning trees of $G$. 
The quantity $w(G)$ depends only on the underlying metric graph $\Gamma$. 

Let $\M_T$ be the $m \times m$ matrix whose columns are obtained from $1$-chains $\circu(T,e)$ associated to fundamental circuits of $T$, and let $\N_T$ be the $m \times m$ matrix whose columns are obtained from $1$-chains $\cocirc(T,e)$ associated to fundamental cocircuits of $T$ (see \cite[\S7.2.1]{FR2}).
Consider the following matrix averages:
\[
\P = \sum_{T}{\frac{w(T)}{w(G)} \M_T} \quad , \quad \P' = \sum_{T}{\frac{w'(T)}{w'(G)} \N_T} \, ,
\]
the sums being over all spanning trees $T$ of $G$. 
It is a classical theorem of Kirchhoff \cite{Kirchhoff} that the matrix of $\pi \colon C_1(G,\RR) \twoheadrightarrow H_1(G, \RR)$, with respect to $\Oc$, is $\P$.
Similarly, the matrix of $\pi' \colon C_1(G,\RR) \twoheadrightarrow H_1(G, \RR)^{\perp}$, with respect to $\Oc$, is $(\P')^{\TP}$. 

Let $\Xib$ be the $m \times m$ {\em matrix of cross ratios}:
\[
\Xib \coloneqq \left(\xi(e^-, e^+ , f^- , f^+) \right)_{e,f \in \Oc} \, . 
\]
Let $\L$ be any generalized inverse of $\Q$ (i.e. $\Q\L\Q = \Q$). Then we have $\Xib = \B^{\TP} \L \B$.
It is shown in \cite[Proposition~7.8]{FR2} that the matrix of $\pi \colon C_1(G,\RR) \twoheadrightarrow H_1(G, \RR)$, with respect to $\Oc$, is 
$\I- \D^{-1} \Xib$, and the matrix of $\pi' \colon C_1(G,\RR) \twoheadrightarrow H_1(G, \RR)^{\perp}$, with respect to $\Oc$, is 
$\D^{-1} \Xib$. In particular, for each $f \in \Oc$ we have
\begin{itemize}
\item[$\diamond$] $\pi(f) = \sum_{e \in \Oc} {\F(e,f) e}$, where 
\begin{equation} \label{eq:Fef}
\F(e,f) \coloneqq
\begin{cases}
 1-{r(e^-, e^+)}/{\ell(e)}  &\text{ if } e=f\\
-{\xi(e^-, e^+ ,f^-, f^+)}/{\ell(e)}  &\text{ if } e \ne f \, .
\end{cases} 
\end{equation}
\item[$\diamond$] $\pi'(f) = \sum_{e \in \Oc} {\F'(e,f) e}$, where 
\[
\F'(e,f) =
{\xi(e^-, e^+ ,f^-, f^+)}/{\ell(e)} \, .
\]
\end{itemize}
Moreover, we have equalities:
\begin{equation} \label{eq:Ps_Cross}
\P= \I- \D^{-1} \Xib \quad , \quad \P' = \Xib \D^{-1}\, .
\end{equation}

\begin{Definition} \label{def:foster}
The {\em Foster coefficient} of $e \in \mathbb{E}(G)$ is, by definition,
\[\F(e) \coloneqq \F(e,e) = 1-\frac{r(e^-, e^+)}{\ell(e)}\, . \]
Clearly, $\F(e) = \F(\bar{e})$, so $\F(e)$ is also well-defined for $e \in E(G)$. 
\end{Definition}
\begin{Remark} \phantomsection \label{rmk:foster}
\begin{itemize} 
\item[]
\item[(i)] It follows from \eqref{eq:Ps_Cross} that 
\[
\F(e) = \sum_{T \not \ni e} \frac{w(T)}{w(G)}\, ,
\]
the sum being over all spanning trees $T$ of $G$ {\em not containing} $e$.
\item[(ii)] It is a consequence of `Rayleigh's monotonicity law' that $0 \leq \F(e) < 1$, and the equality $\F(e)=0$ holds if and only if $e$ is a bridge.
\end{itemize}
\end{Remark}
Fix an arbitrary path $\gamma$ from $y$ to $x$. Let $\boldsymbol{\gamma}_{yx}$ denote the associated $1$-chain. Then, by \cite[Corollary~7.13]{FR2}, we have
\begin{equation} \label{eq:resist}
r(x,y) = [\boldsymbol{\gamma}_{yx}, \pi'(\boldsymbol{\gamma}_{yx})] \, .
\end{equation}
\begin{Example} \label{ex:eff}
The following observation will be useful for computations. Let $e = \{u,v\}$ denote an edge segment in a metric graph $\Gamma$, and let $p \in e$ be a point with distance $x$ from $u$ and distance $\ell(e) - x$ from $v$. Then, for each point $q \in \Gamma$, we have
\[
r(p,q) = \frac{\ell(e)-x}{\ell(e)} r(u,q) + \frac{x}{\ell(e)}r(v,q) + \F(e) \frac{\left(\ell(e)-x\right)x}{\ell(e)}\, .
\]
This follows, for example, by a direct computation using \eqref{eq:resist}. We leave the details to the interested reader.
\end{Example}
\subsection{Generalized Rayleigh's laws}
We will need the following two results from \cite[\S8]{FR2}. Let $\Gamma$ be a metric graph. Let $e$ be an edge segment of $\Gamma$ with boundary points $\partial e = \{e^-, e^+\}$. Let $\Gamma/e$ denote the metric graph obtained by contracting $e$ (equivalently, by setting $\ell(e) = 0$). Then
\begin{equation}\label{eq:Rayleigh_j}
j_z(x,y ; \Gamma/e) = j_z(x,y ; \Gamma) -  \frac{\xi(x,z , e^-, e^+ ; \Gamma) \, \xi(y,z , e^-, e^+; \Gamma)}{r(e^-, e^+; \Gamma)} \, ,
\end{equation}
and
\begin{equation}\label{eq:Rayleigh_r}
r(x,y ; \Gamma/e) =r(x,y ; \Gamma) -  \frac{\xi(x,y , e^-, e^+; \Gamma) ^2}{r(e^-, e^+; \Gamma)} \, .
\end{equation}
Note that \eqref{eq:Rayleigh_r} is, in fact, a special case of \eqref{eq:Rayleigh_j}.

\subsection{Contractions and models}\label{sec:contr_and_models}
Let $G$ be a weighted graph and let $e \in E(G)$ be an edge of $G$.
We denote by $G/e$ the weighted graph obtained from $G$ by contracting the edge $e$ and removing all loops that might be created in the process. Assume that $G$ is a model of the metric graph $\Gamma$. In particular we may view $e$ as an edge segment of $\Gamma$. We then observe that for $x, y, z, w \in V(G)$ the cross ratio $\xi(x,y,z,w;\Gamma/e)$ measured on $\Gamma/e$ is equal to the cross ratio $\xi(x,y,z,w;G/e)$ measured on (the metric graph canonically associated to) $G/e$. A similar remark pertains to the $j$-function $j_z(x,y;\Gamma/e)$ and the effective resistance function $r(x,y;\Gamma/e)$. We leave the details to the reader. 

\section{Calculus of random spanning trees} \label{sec:random}

In this section we start with the real work leading to Theorem~\ref{thm:momentIntro}.
\begin{Definition}
Let $G$ be a model of a metric graph $\Gamma$. For edges $e = \{e^-, e^+\}$ and $f = \{f^-, f^+\}$ of $G$ we define: 
\[
\Prm (e,f) \coloneqq 
\begin{cases}
{r(e^-, e^+ ; G)}/{\ell(e)} &\text{ if $e=f$,}\\
{r(e^-, e^+ ; G)}/{\ell(e)} \times {r(f^-, f^+ ; G/e)}/{\ell(f)} &\text{ if $e\ne f$.}
\end{cases}
\]
We use the notation $\Prm(e)\coloneqq \Prm (e,e)$. If we want to clarify the underlying model $G$, we use the notations $\Prm(e,f; G)$ and $\Prm(e; G)$.
\end{Definition}
By Definition~\ref{def:foster} and Remark~\ref{rmk:foster}~(i) we know
\begin{equation} \label{eq:Pe_sum}
\Prm(e) = 1-\F(e) = \sum_{T \ni e} \frac{w(T)}{w(G)}\, ,
\end{equation}
the sums being over all spanning trees $T$ of $G$ {\em containing} $e$. So $\Prm(e)$ is the probability of $e$ being present in a {\em random spanning tree}, where a spanning tree $T$ is chosen with probability ${w(T)}/{w(G)}$. 

A similar probabilistic interpretation holds for $\Prm (e,f)$ when $e\ne f$. Namely, since $\Prm (e,f) = \Prm(e; G) \Prm(f; G/e)$, it represents the probability  
of both $e$ and $f$ being present in a random spanning tree. In other words, 
\begin{equation} \label{eq:Pef_sum}
\Prm(e,f) = \sum_{T \ni e,f} \frac{w(T)}{w(G)}\, .
\end{equation}
It follows that $\Prm(e,f) = \Prm(f,e)$. One can use \eqref{eq:Rayleigh_r} (alternatively, the `transfer--current theorem' -- see \cite[\S4.2]{LP}) to compute $\Prm(e,f)$ directly in terms of invariants of $G$.

\begin{Definition}\phantomsection \label{def:stars}
\begin{itemize}
\item[]
\item[(i)] Let $\st \colon V(G) \rightarrow \RR$ be the function defined by sending $p \in V(G)$ to
\[
\st(p)  \coloneqq \sum_{e=\{p , x\}} \Prm(e) \, ,
\]
the sum being over all edges $e$ incident to $p$ in $G$ (i.e. the {\em star} of $p$).
\item[(ii)] Let $\tf \colon V(G) \times E(G) \rightarrow \RR$ be the function defined by sending $(p , e)$ to
\[
\tf(p, e) \coloneqq \sum_{f=\{p , x\}} \Prm(e, f) \, ,
\]
the sum being over all edges $f$ incident to $p$ in $G$.
\end{itemize}
\end{Definition}

\begin{Proposition} \label{prop:starformula}
Fix a vertex $q \in V(G)$. We have
\[
\st(p) = \sum_{e=\{p , x\}} \frac{r(x,q) - r(p,q)}{\ell(e)} + 2 - 2\, \delta_q(p) \, .
\]
the sum being over all edges $e \in E(G)$ incident to $p$. 
\end{Proposition}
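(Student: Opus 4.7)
The plan is to reduce the claimed formula to a computation of the combinatorial Laplacian applied to a suitable $j$-function at the vertex $p$.

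First I would expand $\st(p) = \sum_{e=\{p,x\}}\Prm(e)$ using equation \eqref{eq:Pe_sum} together with Definition~\ref{def:foster}, which gives $\Prm(e) = 1 - \F(e) = r(p,x)/\ell(e)$. Subtracting the right-hand side of the claim, the task then reduces to proving
\[
\sum_{e=\{p,x\}} \frac{r(p,x) + r(p,q) - r(x,q)}{\ell(e)} \;=\; 2 - 2\,\delta_q(p)\, .
\]

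Next I would use the identities among $j$-functions and the effective resistance from the Example block. Specifically, \eqref{eq:jid3} gives $r(p,x) = j_p(x,q) + j_x(p,q)$, and \eqref{eq:jid1} gives $r(p,q) - r(x,q) = j_p(x,q) - j_x(p,q)$. Adding these, the numerator simplifies beautifully to $2\,j_p(x,q)$, so the identity to prove becomes
\[
\sum_{e=\{p,x\}} \frac{j_p(x,q)}{\ell(e)} \;=\; 1 - \delta_q(p)\, .
\]

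At this point the key observation is that the function $x \mapsto j_p(x,q)$ has distributional Laplacian $\delta_q - \delta_p$ by the very definition of the $j$-function, hence it is harmonic (and therefore affine) on $\Gamma\setminus\{p,q\}$. Since $p,q \in V(G)$, this function is affine on each edge of the model $G$, so its restriction to $V(G)$ is compatible with the combinatorial Laplacian $\Q$ of $G$. Applying $\Q$ to this restriction at the vertex $p$, and using $j_p(p,q)=0$, yields
\[
(\Q j_p(\cdot,q))(p) \;=\; \sum_{e=\{p,x\}} \frac{j_p(p,q) - j_p(x,q)}{\ell(e)} \;=\; -\sum_{e=\{p,x\}} \frac{j_p(x,q)}{\ell(e)}\, ,
\]
while the compatibility between the combinatorial and distributional Laplacians forces $(\Q j_p(\cdot,q))(p) = (\delta_q-\delta_p)(p) = \delta_q(p) - 1$. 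Combining these two expressions yields the required identity, from which the proposition follows by doubling. The argument is essentially routine; the only subtlety is verifying that $j_p(\cdot,q)$ is genuinely affine on every edge of $G$ (which uses $p,q \in V(G)$) so that the combinatorial Laplacian can be applied without correction terms.
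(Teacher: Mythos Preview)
Your proof is correct and follows essentially the same approach as the paper: both reduce the statement to evaluating the combinatorial Laplacian of a $j$-function at the vertex $p$. The only cosmetic difference is that the paper expands $r(p,x)$ via \eqref{eq:jid4} in terms of $j_q(\cdot,p)$ (so the sum $\sum (r(x,q)-r(p,q))/\ell(e)$ appears directly and the remainder is $2\,\Delta(j_q(\cdot,p))(p)$), whereas you first subtract the claimed right-hand side and simplify the numerator via \eqref{eq:jid1} and \eqref{eq:jid3} to reach $2\,j_p(x,q)$, then compute $\Delta(j_p(\cdot,q))(p)$; the two Laplacian computations are equivalent.
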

\begin{proof}
By \eqref{eq:jid4} we may write $r(p,x) = j_q(x,x) - j_q(p,p) + 2 \left(j_q(p,p) - j_q(x,p)\right)$. Therefore
\[
\begin{aligned}
\st(p) &= \sum_{e=\{p , x\}} \frac{r(p,x)}{\ell(e)} \\
&= \sum_{e=\{p , x\}} \frac{ j_q(x,x) - j_q(p,p)}{\ell(e)} + 2 \sum_{e=\{p , x\}}\frac{j_q(p,p) - j_q(x,p)}{\ell(e)} \\
&= \sum_{e=\{p , x\}} \frac{ j_q(x,x) - j_q(p,p)}{\ell(e)} + 2 \Delta \left( j_q(\cdot,p)\right) (p)\\
&= \sum_{e=\{p , x\}} \frac{ j_q(x,x) - j_q(p,p)}{\ell(e)} + 2 (\delta_p(p) - \delta_q(p)) \, .
\end{aligned}
\]
\end{proof}
Recall from \S\ref{sec:contr_and_models}  the weighted graph $G/e$ obtained by contracting the edge $e=\{u,v\} \in E(G)$ and removing all loops that might be created in the process. Let $\eb \subseteq E(G)$ denote the set of all edges {\em parallel} to $e$ (i.e. connecting $u$ and $v$). We make the identification $E(G/e) = E(G)\backslash \eb$, and the two vertices $u, v \in V(G)$ will be identified with a single vertex $v_e \in V(G/e)$ (see Figure~\ref{fig:contract}), and $V(G) \backslash \{u,v\} = V(G/e) \backslash \{v_e\}$.

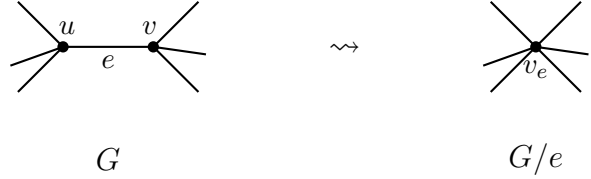
\begin{figure}[h!]
$$
\begin{xy}
(0,0)*+{
	\scalebox{.5}{$
	\begin{tikzpicture}
	\draw[black, ultra thick] (-1.2,0) to (-2.4,1.2);
	\draw[black, ultra thick] (-1.2,0) to (1.2,0);
	\draw[black, ultra thick] (-1.2,0) to (-2.4,-1.2);
	\draw[black, ultra thick] (-1.2,0) to (-2.6,-.5);
	\draw[black, ultra thick] (1.2,0) to (2.4,1.2);
	\draw[black, ultra thick] (1.2,0) to (2.4,-1.2);
	\draw[black, ultra thick] (1.2,0) to (2.6,-.2);
	\fill[black] (1.2,0) circle (.15);
	\fill[black] (-1.2,0) circle (.15);
	\end{tikzpicture}
	$}
};
(-5.5,2.5)*+{\mbox{{\smaller $u$}}};
(5.5,2.5)*+{\mbox{{\smaller $v$}}};
(0,-2)*+{\mbox{{\smaller $e$}}};
(0,-15)*+{G};
\end{xy}
\ \ \ \ \ \ \ \ \ \ \rightsquigarrow \ \ \ \ \ \ \ \ \ \ 
\begin{xy}
(0,0)*+{
	\scalebox{.5}{$
	\begin{tikzpicture}
	\draw[black, ultra thick] (0,0) to (-1.2,1.2);
	\draw[black, ultra thick] (0,0) to (-1.2,-1.2);
	\draw[black, ultra thick] (0,0) to (-1.4,-.5);
	\draw[black, ultra thick] (0,0) to (1.2,1.2);
	\draw[black, ultra thick] (0,0) to (1.2,-1.2);
	\draw[black, ultra thick] (0,0) to (1.4,-.2);
	\fill[black] (0,0) circle (.15);
	\end{tikzpicture}
	$}
};
(0,-3)*+{\mbox{{\smaller $v_e$}}};
(0,-15)*+{G/e};
\end{xy}
\ \ \ \ \ \ \ \ \ \ 
\!\!\!\!\!\!\!\!\!\!\!\!\!\!\!\!\!\!\!
$$
\caption{Contracting the edge $e = \{u,v\}$.}
\label{fig:contract}
\end{figure}

\begin{Theorem} \label{thm:star_ve_formula}
Fix a vertex $q \in V(G)$. Let  $p \in V(G)$ and $e= \{u,v\} \in E(G)$. Then
\[
\begin{aligned}
\tf(p,e) &= \frac{r(u,v; G)}{\ell(e)} \sum_{\substack{f=\{p , x\}  \\ f \in E(G)\backslash \eb}} \frac{r(x,q; G/e) - r(p,q; G/e)}{\ell(f)}   \\
 &+2 \frac{r(u,v; G)}{\ell(e)} \left(1- \delta_q(p) - \frac{\xi(u,v,u,q; G)}{r(u,v; G)} \delta_u(p)- \frac{\xi(v,u,v,q; G)}{r(u,v; G)}\delta_v(p) \right)
\, .
\end{aligned}
\]
The sum is over all edges $f \in E(G) \backslash \eb$ incident to $p$ in $G$. 

\end{Theorem}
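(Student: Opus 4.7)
The plan is to rewrite $\tf(p,e) = \sum_{f \ni p}\Prm(e,f)$ by separating the edges parallel to $e$ from the rest, pushing the non-parallel piece down to $G/e$, and applying the star formula of Proposition~\ref{prop:starformula}. The awkward point is that when $p \in \{u,v\}$ the vertex $p$ gets identified with $v_e$ in $G/e$, so the star of $p$ in $G$ is \emph{not} simply the star of $v_e$ in $G/e$.

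First I would note, using the probabilistic interpretation \eqref{eq:Pef_sum}, that two distinct parallel edges cannot coexist in a spanning tree: among the $\eb$-contributions only $f=e$ survives (contributing $\Prm(e) = r(u,v;G)/\ell(e)$, and only when $p \in \{u,v\}$). For $f \in E(G)\setminus\eb$ incident to $p$ the definition gives $\Prm(e,f) = (r(u,v;G)/\ell(e))\cdot r(p,x;G/e)/\ell(f)$, where $x$ is the other endpoint of $f$. I would then split on $G/e$ via \eqref{eq:jid4},
\[
r(p,x;G/e) \;=\; \bigl(r(x,q;G/e) - r(p,q;G/e)\bigr) + 2\bigl(j_q(p,p;G/e) - j_q(p,x;G/e)\bigr),
\]
so that, after multiplication by $r(u,v;G)/\ell(e)$ and summation, the first piece yields exactly the sum appearing in the theorem. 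What remains is to evaluate the ``Laplacian piece''
\[
S(p) \;\coloneqq\; \sum_{\substack{f=\{p,x\}\\ f\in E(G)\setminus\eb}} \frac{j_q(p,p;G/e) - j_q(p,x;G/e)}{\ell(f)}.
\]
When $p \notin \{u,v\}$ this sum ranges over precisely the edges of $G/e$ incident to $p$, so $S(p) = \Delta_{G/e}\,j_q(p,\cdot\,;G/e)(p) = 1 - \delta_q(p)$ by the defining property of the $j$-function, producing the required constant term (the $\delta_u$ and $\delta_v$ coefficients vanishing).

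The hard part is the case $p \in \{u,v\}$, say $p = u$: the star of $v_e$ in $G/e$ mixes the edges incident to $u$ with those incident to $v$, so the Laplacian identity on $G/e$ no longer isolates $S(u)$. My strategy is to lift the function $\psi(y) \coloneqq j_q(v_e,y;G/e)$ from $V(G/e)$ to $V(G)$ by setting $\psi(u) = \psi(v) = \psi(v_e)$; the parallel edges at $u$ contribute zero to $\Delta_G \psi(u)$ because $\psi(u) - \psi(v) = 0$, so $S(u) = \Delta_G\psi(u)$. The main obstacle is then to identify $\Delta_G\psi$: since $\Delta_{G/e}\psi = \delta_{v_e} - \delta_q$, the extension forces $\Delta_G\psi$ to be supported on $\{u,v,q\}$ with total mass zero and mass $-(1 - \delta_q(v_e))$ at $q$, so matching Laplacians and vanishing at $q$ forces
\[
\psi \;=\; a\, j_q(u,\cdot\,;G) + b\, j_q(v,\cdot\,;G)
\]
on $V(G)$ for some $a,b$ with $a + b = 1 - \delta_q(v_e)$. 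The remaining condition $\psi(u)=\psi(v)$, combined with the cross-ratio identities $\xi(u,v,u,q;G) = j_q(u,u;G) - j_q(u,v;G)$, $\xi(v,u,v,q;G) = j_q(v,v;G) - j_q(u,v;G)$, and $\xi(u,v,u,q;G) + \xi(v,u,v,q;G) = r(u,v;G)$ (a direct consequence of \eqref{eq:jid4}), pins down $a$ and $b$ as the corresponding cross-ratio quotients over $r(u,v;G)$. Feeding $S(u) = \Delta_G\psi(u) = a$ back into the decomposition and combining with the surviving parallel-edge term completes the computation; the case $p = v$ is symmetric.
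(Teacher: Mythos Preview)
Your approach is sound and is genuinely different from the paper's. The paper handles $p=u$ by applying the generalized Rayleigh law \eqref{eq:Rayleigh_j} term by term to rewrite each summand $j_q(v_e,v_e;G/e)-j_q(x,v_e;G/e)$ in terms of $j$-functions and cross ratios on $G$, and then recognizes the resulting sums as Laplacians on $G$. Your idea---lift $\psi=j_q(v_e,\cdot\,;G/e)$ to $V(G)$, observe that $\Delta_G\psi$ is supported on $\{u,v,q\}$, hence $\psi=a\,j_q(u,\cdot\,;G)+b\,j_q(v,\cdot\,;G)$, and read off $S(u)=\Delta_G\psi(u)=a$---is more conceptual and bypasses Rayleigh's laws entirely. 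Solving the two linear conditions indeed gives $a=\xi(v,u,v,q;G)/r(u,v;G)$ when $q\notin\{u,v\}$, so $2\,\tfrac{r(u,v;G)}{\ell(e)}S(u)$ matches the theorem's constant term exactly.

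There is, however, a bookkeeping issue you will hit at the very last step. You correctly include the contribution $\Prm(e)=r(u,v;G)/\ell(e)$ from $f=e$, but when you combine it with the other pieces you will find an excess of exactly $\Prm(e)$ over the displayed formula whenever $p\in\{u,v\}$. (A quick check on a triangle with unit edge lengths and $q$ the third vertex gives $\tf(u,e)=1$, while the displayed formula returns $1/3$.) The paper's own derivation, in its analogue of your decomposition (equation \eqref{eq:pu1}), silently omits the $f=e$ term as well; the identity as stated is really one for $\tf(p,e)-\Prm(e)\bigl(\delta_u(p)+\delta_v(p)\bigr)$, and that is precisely the quantity used downstream in Lemma~\ref{lem:l4}, where the matrix $\Zb$ has zero diagonal. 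So your computation is the honest one; just be aware that carrying the parallel-edge term through will expose this discrepancy rather than reproduce the displayed statement verbatim.

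One minor point: when $q\in\{u,v\}$ the lifted $\psi$ is identically zero, so $a+b=0$ and the description ``cross-ratio quotients over $r(u,v;G)$'' no longer applies literally; handle that edge case separately (it is trivial, $S(u)=0$, and still matches the theorem's constant term).
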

\begin{proof}
By definition 
\[
\Prm (e,f) = \frac{r(u,v; G)}{\ell(e)} \Prm(f; G/e)\, .\] 
If $p \not \in \{u,v\}$ the result follows immediately from Proposition~\ref{prop:starformula}:
\[
\tf(p,e) = \frac{r(u,v; G)}{\ell(e)} \left(\sum_{\substack{f=\{p , x\}  \\ f \in E(G)}} \frac{r(x,q; G/e) - r(p,q; G/e)}{\ell(e)} + 2 - 2\, \delta_q(p)\right) \, ,
\]
the sum being over all edges $f \in E(G)$ incident to $p$ in $G$ (equivalently, all edges $f \in E(G/e)$ incident to $p$ in $G/e$). 

So, by symmetry, it remains to show the equality for $p= u$. Recall we denote the vertex obtained by identifying $u$ and $v$ in $G/e$ by $v_e$ (Figure~\ref{fig:contract}). As in the proof of Proposition~\ref{prop:starformula}, we first compute:
\begin{equation} \label{eq:pu1}
\begin{aligned}
\tf(u,e) &= \frac{r(u,v; G)}{\ell(e)} \left(\sum_{\substack{f=\{u , x\} \\ f \in E(G)\backslash \eb}} \frac{ j_q(x,x;G/e) - j_q(v_e,v_e;G/e)}{\ell(f)} \right) \\
&+ 2 \frac{r(u,v; G)}{\ell(e)}\left(\sum_{\substack{f=\{u , x\} \\ f \in E(G)\backslash \eb}}\frac{j_q(v_e,v_e;G/e) - j_q(x,v_e;G/e)}{\ell(f)} \right) \, ,
\end{aligned}
\end{equation}
where the sums are over all edges $f \in E(G) \backslash \eb$ incident to $u$ in $G$. Unlike in the proof of Proposition~\ref{prop:starformula}, we {\em cannot} interpret the second sum as the Laplacian of the $j$-function on $G/e$ because the summation is not over all edges incident to $v_e$ in $G/e$ (e.g. in Figure~\ref{fig:contract} only edges on the left of $v_e$ appear in the summation). We proceed by `lifting' the problem to $G$ using generalized Rayleigh's laws \eqref{eq:Rayleigh_j} and \eqref{eq:Rayleigh_r}. We find
\[
\begin{aligned}
&\frac{j_q(v_e,v_e;G/e) - j_q(x,v_e;G/e)}{\ell(f)} = \frac{j_q(u,u;G) - j_q(x,u;G)}{\ell(f)}\\
&+ \frac{\xi(u,v,u,q;G)}{r(u,v;G)}\left( \frac{\xi(u,v,x,q;G) - \xi(u,v,u,q;G)}{\ell(f)}\right) \, .
\end{aligned}
\]
It is easily checked that the right hand side is zero for $x=v$. Moreover, by the definition of cross ratios, we compute
\[\xi(u,v,x,q;G) - \xi(u,v,u,q;G) = \xi(u,v,x,u;G)\, .\] 
So we have
\begin{equation} \label{eq:pu2}
\begin{aligned}
&\sum_{\substack{f=\{u , x\} \\ f \in E(G)\backslash \eb}}\frac{j_q(v_e,v_e;G/e) - j_q(x,v_e;G/e)}{\ell(f)} = \sum_{\substack{f=\{u , x\} \\ f \in E(G)}} \frac{j_q(u,u;G) - j_q(x,u;G)}{\ell(f)}\\
&+ \frac{\xi(u,v,u,q;G)}{r(u,v;G)} \sum_{\substack{f=\{u , x\} \\ f \in E(G)}} \frac{\xi(u,v,x,u;G)}{\ell(f)} \, .
\end{aligned}
\end{equation}
As in the proof of Proposition~\ref{prop:starformula} we have 
\begin{equation} \label{eq:pu3}
\begin{aligned}
\sum_{\substack{f=\{u , x\} \\ f \in E(G)}} \frac{j_q(u,u;G) - j_q(x,u;G)}{\ell(f)} &= \Delta\left(j_q (\cdot , u)\right)(u)\\
&= \delta_u(u)-\delta_q(u) = 1- \delta_q(u)\, .
\end{aligned}
\end{equation}
By expanding with respect to the base point $q$, we also compute 
\begin{equation} \label{eq:pu4}
\begin{aligned}
\sum_{\substack{f=\{u , x\} \\ f \in E(G)}} \frac{\xi(u,v,x,u;G)}{\ell(f)} &= 
\sum_{\substack{f=\{u , x\} \\ f \in E(G)}} \frac{j_q(v,u;G) - j_q(v,x;G)}{\ell(f)} 
\\
&- \sum_{\substack{f=\{u , x\} \\ f \in E(G)}} \frac{j_q(u,u;G) - j_q(u,x;G)}{\ell(f)}
\\
&= \Delta\left(j_q (v , \cdot)\right)(u) - \Delta\left(j_q (u , \cdot)\right)(u)
\\
&= \left(\delta_v(u) - \delta_q(u) \right) -  \left(\delta_u(u) - \delta_q(u) \right) \\
&= -1
\, .
\end{aligned}
\end{equation}
The result for $p=u$ follows by putting together \eqref{eq:pu1}, \eqref{eq:pu2}, \eqref{eq:pu3}, \eqref{eq:pu4}.
\end{proof}

\section{Energy levels of rooted spanning trees}
\label{sec:crossavg}
In this section, we prove a very subtle identity for cross ratios (Theorem \ref{thm:energylevel}). 

\subsection{Energy levels}
Recall that a rooted spanning tree $(T, q)$ of $G$ comes with a preferred orientation $\Tc_q \subseteq \mathbb{E}(G)$, where all edges are oriented {\em away from $q$} on the spanning tree $T$ (see \S\ref{sec:weightedgraph}).
\begin{Definition} \label{def:enlev}
We define the {\em energy level} of a rooted spanning tree $(T, q)$ to be 
\[
\boldsymbol{\varrho}(T, q) \coloneqq \sum_{e,f \in \Tc_q} \xi(e^-, e^+, f^-, f^+) \, .
\]
\end{Definition}
The following result is a justification for our terminology, and is useful in our later computation. Let $\deg_T(v)$ denote the number of edges incident with $v \in V(G)$ in the spanning tree $T$. Consider the canonical element $ \nu_{(T,q)} \in \dmeasz(G)$ associated to the rooted spanning tree $(T, q)$ of $G$ defined by
\[
\nu_{(T,q)} = \sum_{v\in V(G)} \left(\deg_T(v) - 2\right) \delta_v + 2 \delta_q \, .
\] 
\begin{Lemma} \label{lem:rootedenergy}
We have
 \begin{equation} \label{eq:rhovsen}
 \boldsymbol{\varrho}(T, q ) = \langle \nu_{(T,q)}, \nu_{(T,q)} \rangle_{\en}
= \sum_{v,w \in V(G)} \left(\deg_T(v) - 2\right)\left(\deg_T(w) - 2\right) j_q(v,w)\, .
\end{equation}
\end{Lemma}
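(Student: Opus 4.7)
The plan is to convert the double sum of cross ratios into a single energy pairing and then identify the resulting measure with $\nu_{(T,q)}$. Combining the identity $\xi(x,y,z,w) = \langle \delta_x - \delta_y,\, \delta_z - \delta_w \rangle_{\en}$ (recorded in \S\ref{sec:pot}) with bilinearity of $\langle \cdot,\cdot \rangle_{\en}$, one immediately obtains
\[
\boldsymbol{\varrho}(T,q) \;=\; \sum_{e,f \in \Tc_q} \xi(e^-,e^+,f^-,f^+) \;=\; \bigl\langle \mu_{(T,q)},\, \mu_{(T,q)} \bigr\rangle_{\en},
\]
where $\mu_{(T,q)} \coloneqq \sum_{e \in \Tc_q}(\delta_{e^-} - \delta_{e^+}) \in \dmeasz(G)$ (the total mass vanishes because the sum telescopes along $T$).

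The heart of the argument is a combinatorial identification $\mu_{(T,q)} = \nu_{(T,q)}$. Since every edge of $T$ is oriented away from $q$ in $\Tc_q$, each non-root vertex $v \ne q$ is the head $e^+$ of exactly one edge of $T$ (the parent edge on the unique $q$-to-$v$ path in $T$) and the tail $e^-$ of the remaining $\deg_T(v) - 1$ edges of $T$ incident to it; hence the coefficient of $\delta_v$ in $\mu_{(T,q)}$ is $(\deg_T(v)-1) - 1 = \deg_T(v) - 2$. On the other hand, the root $q$ itself is the tail of all $\deg_T(q)$ incident edges and the head of none, so the coefficient of $\delta_q$ equals $\deg_T(q) = (\deg_T(q) - 2) + 2$. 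These coefficients match exactly those in the defining formula for $\nu_{(T,q)}$, so the identification holds in $\dmeasz(G)$.

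Finally, substituting $\nu_{(T,q)} = \sum_v a_v \delta_v$ into \eqref{eq:en} yields
\[
\langle \nu_{(T,q)}, \nu_{(T,q)} \rangle_{\en} \;=\; \sum_{v,w \in V(G)} a_v a_w\, j_q(v,w),
\]
and the identities $j_q(v,q) = j_q(q,w) = 0$ kill every summand involving $q$. In particular, the extra $+2\delta_q$ inside $\nu_{(T,q)}$ contributes nothing, and one is left with the claimed expression $\sum_{v,w \in V(G)} (\deg_T(v) - 2)(\deg_T(w) - 2)\, j_q(v,w)$.

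I do not anticipate any substantive difficulty in carrying this out; the only mildly subtle point is the sign bookkeeping that produces $\mu_{(T,q)}$, which rests on the observation that the orientation "away from $q$" forces each non-root vertex of $T$ to have exactly one incoming edge. Everything else is a mechanical unwinding of the definition of the energy pairing and the normalization $j_q(\cdot,q) \equiv 0$.
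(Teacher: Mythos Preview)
Your proposal is correct and follows essentially the same approach as the paper's own proof: both identify $\sum_{e\in\Tc_q}(\delta_{e^-}-\delta_{e^+})$ with $\nu_{(T,q)}$ by counting incoming versus outgoing edges at each vertex of the rooted tree, then invoke bilinearity of the energy pairing and the normalization $j_q(\cdot,q)=0$. The only cosmetic difference is that you introduce the intermediate name $\mu_{(T,q)}$ before making the identification, whereas the paper states the identification first.
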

\begin{proof}
By the definition of the preferred orientation $\Tc_q \subseteq \mathbb{E}(G)$  we have:
\[
\sum_{e \in \Tc_q} \left(\delta_{e^-} - \delta_{e^+}\right) = \nu_{(T,q)}\,.
\]
This is because every vertex $v \ne q$ has exactly $1$ incoming edge and $\deg_T(v) - 1$ outgoing edges in $\Tc_q$. At $q$, however, there is no incoming edge in $\Tc_q$.

The result now follows directly from bilinearity of the energy pairing:
\[
\begin{aligned}
\boldsymbol{\varrho}(T, q) &= \sum_{e,f \in \Tc_q} \xi(e^-, e^+, f^-, f^+)
=\sum_{e,f \in \Tc_q} \langle \delta_{e^-} - \delta_{e^+}, \delta_{f^-} - \delta_{f^+}\rangle_{\en} \\
&=\ \langle \, \sum_{e \in \Tc_q} \left(\delta_{e^-} - \delta_{e^+}\right),\sum_{f \in \Tc_q}\left( \delta_{f^-} - \delta_{f^+}\right) \, \rangle_{\en} 
=  \langle \nu_{(T,q)}, \nu_{(T,q)} \rangle_{\en}
\, .
\end{aligned}
\]
The second equality in \eqref{eq:rhovsen} follows from \eqref{eq:en}, because $j_q(\cdot , q) = j_q (q, \cdot) = 0$.
\end{proof}

\subsection{The average of energy levels}
We can now state the most technical ingredient in computing tropical moments of tropical Jacobians.

\begin{Theorem}\label{thm:energylevel}
Fix a vertex $q \in V(G)$. We have the equality
\[
\frac{1}{w(G)}\sum_T {w(T) \, {\boldsymbol{\varrho}}(T, q)} = \sum_{e = \{u,v\}}\frac{j_{u}(v, q)^2 + j_{v}(u, q)^2}{\ell(e)}\, ,
\]
where the sum on the left is over all spanning trees $T$ of $G$, and the sum on the right is over all edges $e \in E(G)$.
\end{Theorem}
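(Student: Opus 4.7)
The plan is to establish Theorem~\ref{thm:energylevel} by averaging the identity of Lemma~\ref{lem:rootedenergy}, $\boldsymbol{\varrho}(T, q) = \langle \nu_{(T, q)}, \nu_{(T, q)}\rangle_{\en}$, via the bias-variance decomposition
\[
\EE_T[\langle \nu_{(T, q)}, \nu_{(T, q)}\rangle_{\en}] = \langle \mu, \mu\rangle_{\en} + \EE_T[\langle \nu_{(T, q)} - \mu, \nu_{(T, q)} - \mu\rangle_{\en}],
\]
where $\EE_T$ denotes the expectation with respect to the probability measure $T \mapsto w(T)/w(G)$ on spanning trees and $\mu := \EE_T[\nu_{(T, q)}]$. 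Since $\EE_T[\deg_T(v)] = \st(v)$, Proposition~\ref{prop:starformula} identifies $\mu = -\Delta(r(\cdot, q))$ on $V(G)$, and the standard identity $\int f\,\Delta f = \sum_{e = \{u, v\}}(f(u) - f(v))^2/\ell(e)$ then gives $\langle \mu, \mu\rangle_{\en} = \sum_{e = \{u, v\}}(r(u, q) - r(v, q))^2/\ell(e)$.

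For the variance term, I would write $\nu_{(T, q)} - \mu = \sum_{e \in E(G)}(\mathbf{1}_{e \in T} - \Prm(e))(\delta_{e^-} + \delta_{e^+})$ and expand bilinearly. The cross-term coefficients $\Prm(e, f) - \Prm(e)\Prm(f)$ equal $-\xi(e^-, e^+, f^-, f^+)^2/(\ell(e)\ell(f))$ for $e \ne f$---a direct consequence of Rayleigh's law~\eqref{eq:Rayleigh_r} applied to the definition $\Prm(e, f) = \Prm(e; G)\Prm(f; G/e)$---while the diagonal coefficient at $e = f$ is $\Prm(e)\F(e)$. After using $\psi(e, e) = 2(r(u, q) + r(v, q)) - r(u, v)$ (where $\psi(e, f) := \sum_{a \in \partial e,\, b \in \partial f} j_q(a, b)$) and the identities \eqref{eq:jid1}--\eqref{eq:jid4}---in particular $j_u(v, q)\,j_v(u, q) = (r(u, v)^2 - (r(u, q) - r(v, q))^2)/4$---the theorem reduces to the identity
\[
\sum_{e \in E(G)}\Prm(e)\,\psi(e, e) - \sum_{e, f \in E(G)}\frac{\xi(e^-, e^+, f^-, f^+)^2}{\ell(e)\,\ell(f)}\,\psi(e, f) = 2\sum_{e = \{u, v\}}\frac{j_u(v, q)\,j_v(u, q)}{\ell(e)}.
\]

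The main obstacle is evaluating the double sum on the left. I would attack it using the matrix framework of \S\ref{sec:proj}: writing $\Xib = \B^{\TP}\L\B$ for the cross-ratio matrix and recognizing $\psi(e, f)$ as the analogous quantity built from the unsigned incidence matrix, then exploiting the projection identity $\Xib\D^{-1}\Xib = \Xib$ (reflecting that $\D^{-1}\Xib$ represents the projection $\pi'$ onto $H_1(G, \RR)^{\perp}$) together with the factorization $\Q = \B\D^{-1}\B^{\TP}$ should give enough algebraic structure to collapse the double sum into a single-edge sum. As a fallback, one can proceed combinatorially by expanding $\tf$ via Theorem~\ref{thm:star_ve_formula}: the cross-ratio identity $\xi(u, v, u, q) = j_q(u, u) - j_q(u, v) = j_u(v, q)$ (from \eqref{eq:jid2}) shows that the Dirac corrections in that formula produce the target $j_u(v, q)^2/\ell(e)$ terms directly, and the remaining $G/e$-bulk contribution can be rewritten on $G$ using the Rayleigh identities \eqref{eq:Rayleigh_j}--\eqref{eq:Rayleigh_r} and canceled edge-by-edge.
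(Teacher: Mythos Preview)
Your bias--variance reorganization is correct and genuinely different from the paper's route. The paper never isolates the mean $\mu=\EE_T[\nu_{(T,q)}]$; instead it expands $\boldsymbol{\varrho}(T,q)$ via the handshaking lemma (Lemma~\ref{lem:l1}), averages term by term to get an expression in $\Prm(e)$ and $\Prm(e,f)$ (Lemma~\ref{lem:l2}), and then simplifies the single--edge and double--edge sums separately (Lemmas~\ref{lem:l3} and~\ref{lem:l4}). Your decomposition is cleaner conceptually: the identification $\mu=-\Delta(r(\cdot,q))$ via Proposition~\ref{prop:starformula} immediately gives $\langle\mu,\mu\rangle_{\en}=\sum_e(r(u,q)-r(v,q))^2/\ell(e)$, and the identities \eqref{eq:jid1}, \eqref{eq:jid3} then reduce the theorem to showing that the variance equals $2\sum_e j_u(v,q)j_v(u,q)/\ell(e)$. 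Your reduction to the displayed double--sum identity is also correct (note that the diagonal $e=f$ term of your double sum contributes $\Prm(e)^2\psi(e,e)$ since $\xi(e^-,e^+,e^-,e^+)=r(e^-,e^+)$, which combines with $\Prm(e)\psi(e,e)$ to give the covariance diagonal $\Prm(e)\F(e)\psi(e,e)$).

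The gap is in your proposed resolution of the double sum. The projection identity $\Xib\D^{-1}\Xib=\Xib$ governs ordinary matrix products, but your double sum has \emph{Hadamard} structure: the coefficient $\xi_{ef}^2/(\ell(e)\ell(f))$ is the $(e,f)$ entry of $(\D^{-1}\Xib)\circ(\Xib\D^{-1})$, and you are pairing this entrywise against $\psi=\Ab^{\TP}\L_q\Ab$. There is no general mechanism by which a projection identity for matrix multiplication collapses a triple Hadamard product, and I do not see how to make this step go through without substantial additional input. Your fallback---expanding $\tf(p,e)$ via Theorem~\ref{thm:star_ve_formula} and lifting the $G/e$ terms back to $G$ using \eqref{eq:Rayleigh_j}--\eqref{eq:Rayleigh_r}---is exactly the content of the paper's Lemma~\ref{lem:l4}, which is where the real work happens. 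So while your front end is tidier than the paper's, the computational core you end up needing is the same, and the purely linear--algebraic shortcut you hope for does not obviously exist.
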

\begin{proof}
The result follows by putting together Lemma~\ref{lem:l2}, Lemma~\ref{lem:l3}, and Lemma \ref{lem:l4} below.  
\end{proof}

\begin{Lemma} \label{lem:l1} 
Let $(T,q)$ be a rooted spanning tree of $G$. We have the equality
\[
\begin{aligned}
\boldsymbol{\varrho}(T, q) &= 4 \sum_{x,y \in V(G)} j_q(x,y)
- 4 \sum_{e \in T} \sum_{x \in V(G)} \left(j_q(e^-, x)+j_q(e^+, x)\right) \\
&+ \sum_{e,f \in T}\left(j_q(e^-, f^-)+j_q(e^+, f^+)+j_q(e^-, f^+)+j_q(e^+, f^-) \right)\, .
\end{aligned}
\]
\end{Lemma}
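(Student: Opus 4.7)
The plan is to derive the stated identity directly from Lemma~\ref{lem:rootedenergy}. That lemma already gives a closed-form expression for $\boldsymbol{\varrho}(T,q)$, namely
\[
\boldsymbol{\varrho}(T,q) = \sum_{v,w \in V(G)} (\deg_T(v)-2)(\deg_T(w)-2)\, j_q(v,w)\, ,
\]
so the remaining task is purely algebraic bookkeeping: expand the product $(\deg_T(v)-2)(\deg_T(w)-2)$, and then convert each resulting sum over vertices (weighted by $\deg_T$) into a sum over edges of $T$.

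The first step is to expand the product as
\[
(\deg_T(v)-2)(\deg_T(w)-2) = \deg_T(v)\deg_T(w) - 2\deg_T(v) - 2\deg_T(w) + 4 \, .
\]
Using the symmetry $j_q(v,w)=j_q(w,v)$ to merge the two linear terms, this produces three contributions to $\boldsymbol{\varrho}(T,q)$: a term $\sum_{v,w} \deg_T(v)\deg_T(w)j_q(v,w)$, a term $-4\sum_{v,w} \deg_T(v) j_q(v,w)$, and a term $+4\sum_{v,w} j_q(v,w)$. The third is already in the form required by the lemma.

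The key combinatorial identity I will use to rewrite the other two sums is the elementary handshake relation
\[
\sum_{v \in V(G)} \deg_T(v)\, \phi(v) = \sum_{e \in T}\bigl(\phi(e^-) + \phi(e^+)\bigr)
\]
valid for any real-valued function $\phi$ on $V(G)$ (this holds because each edge $e\in T$ contributes one unit to $\deg_T(e^-)$ and one unit to $\deg_T(e^+)$, independently of any choice of orientation). Applying this with $\phi(v) = j_q(v,w)$ and then summing over $w \in V(G)$ converts $\sum_{v,w}\deg_T(v)j_q(v,w)$ into $\sum_{e\in T}\sum_{x\in V(G)}\bigl(j_q(e^-,x)+j_q(e^+,x)\bigr)$, matching the middle term in the statement. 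Applying the identity twice (once in each variable) converts $\sum_{v,w}\deg_T(v)\deg_T(w)j_q(v,w)$ into $\sum_{e,f\in T}\bigl(j_q(e^-,f^-)+j_q(e^+,f^+)+j_q(e^-,f^+)+j_q(e^+,f^-)\bigr)$, which matches the first term.

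I do not expect a genuine obstacle: the proof is just careful expansion plus the handshake identity. The only subtlety to keep in mind is that the endpoints $e^-,e^+$ in Lemma~\ref{lem:l1} are unoriented (the expression being symmetric in them), so the handshake identity applies directly even though $\boldsymbol{\varrho}(T,q)$ was originally defined using the preferred orientation $\Tc_q$; Lemma~\ref{lem:rootedenergy} has already done the work of eliminating that orientation.
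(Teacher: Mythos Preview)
Your proposal is correct and follows essentially the same approach as the paper's own proof: start from Lemma~\ref{lem:rootedenergy}, expand the product $(\deg_T(v)-2)(\deg_T(w)-2)$, and apply the handshaking identity $\sum_{x}\deg_T(x)\psi(x)=\sum_{e\in T}(\psi(e^-)+\psi(e^+))$ once for the linear term and twice for the bilinear term. The paper's proof is just a two-sentence sketch of exactly this; you have simply spelled out the details.
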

\begin{proof}
The result follows from Lemma~\ref{lem:rootedenergy}, and an application of a (generalized) {\em handshaking lemma}: for each $\psi \in \Mc(G)$ we have 
\[ \sum_{x \in V(G)} \deg_T(x) \,\psi (x) = \sum_{e \in T} \left(\psi(e^-)+\psi(e^+)\right)\, .\]
\end{proof}

\begin{Lemma} \label{lem:l2}
Fix a vertex $q \in V(G)$. We have the equality
\[
\begin{aligned}
\frac{1}{w(G)}&\sum_T {w(T) \, {\boldsymbol{\varrho}}(T, q)} =  4 \!\!\!\! \!\sum_{x,y \in V(G)} j_q(x,y) -4\sum_{e \in E(G)} \sum_{x \in V(G)} \left(j_q(e^-, x)+j_q(e^+, x)\right) \Prm(e) \\
&+\sum_{e,f \in E(G)}\left(j_q(e^-, f^-)+j_q(e^+, f^+)+j_q(e^-, f^+)+j_q(e^+, f^-)\right) \Prm(e,f) \, .
\end{aligned}
\]
\end{Lemma}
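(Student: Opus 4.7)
The plan is to obtain Lemma~\ref{lem:l2} as a direct weighted average of the identity in Lemma~\ref{lem:l1}, using the probabilistic interpretations of $\Prm(e)$ and $\Prm(e,f)$ established in equations~\eqref{eq:Pe_sum} and~\eqref{eq:Pef_sum}. Concretely, I would multiply the formula from Lemma~\ref{lem:l1} through by $w(T)/w(G)$ and sum over all spanning trees $T$ of $G$, then reorganize each of the three resulting sums.

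First, the term $4\sum_{x,y\in V(G)} j_q(x,y)$ is independent of $T$, so averaging with weights $w(T)/w(G)$ simply reproduces it (since $\sum_T w(T)/w(G) = 1$). Next, for the middle term I would interchange the order of summation:
\[
\frac{1}{w(G)}\sum_T w(T) \sum_{e \in T} \bigl(j_q(e^-, x) + j_q(e^+, x)\bigr)
= \sum_{e \in E(G)} \bigl(j_q(e^-, x) + j_q(e^+, x)\bigr) \sum_{T \ni e} \frac{w(T)}{w(G)}
\]
and then identify the inner tree-sum with $\Prm(e)$ using \eqref{eq:Pe_sum}. Finally, for the double sum over $e,f \in T$ I would again swap the order to get
\[
\sum_{e,f \in E(G)} \bigl(\,\cdots\,\bigr) \sum_{T \ni e,\, f} \frac{w(T)}{w(G)} = \sum_{e,f \in E(G)} \bigl(\,\cdots\,\bigr)\,\Prm(e,f),
\]
invoking \eqref{eq:Pef_sum}. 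The only mild subtlety is the diagonal $e = f$: by convention $\Prm(e,e) = \Prm(e)$ (as stated in the definition just before Lemma~\ref{lem:l2}), which is exactly $\sum_{T \ni e} w(T)/w(G)$, so the formula holds uniformly across the diagonal and off-diagonal contributions.

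There is no real obstacle here, since the genuine bookkeeping was already carried out in Lemma~\ref{lem:l1} (and, before that, in Lemma~\ref{lem:rootedenergy}, which converted $\boldsymbol{\varrho}(T,q)$ into a sum of $j$-values over vertices weighted by $\deg_T(v) - 2$). The role of Lemma~\ref{lem:l2} is purely to reinterpret spanning-tree averages of edge-indexed quantities as probabilities of random-spanning-tree events. I would therefore write the proof as a one-line appeal to Lemma~\ref{lem:l1}, followed by the two summation-swap identities above with references to \eqref{eq:Pe_sum} and \eqref{eq:Pef_sum}.
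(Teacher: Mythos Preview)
Your proposal is correct and matches the paper's own proof essentially verbatim: the paper simply states that Lemma~\ref{lem:l2} follows from Lemma~\ref{lem:l1} by changing the order of summations and invoking \eqref{eq:Pe_sum} and \eqref{eq:Pef_sum}. Your added remark about the diagonal $e=f$ is a harmless clarification of a point the paper leaves implicit.
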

\begin{proof}
This follows from Lemma~\ref{lem:l1}: one only needs to change the order of summations and use \eqref{eq:Pe_sum} and \eqref{eq:Pef_sum}. 
\end{proof}

To prove the next two results it is convenient to work with matrices. We fix labelings $V(G) = \{v_1, \ldots , v_n\}$ and $E(G) = \{e_1, \ldots, e_m\}$, and we introduce the following matrices:

\begin{itemize}[noitemsep,nolistsep, leftmargin=*]
\item[$\diamond$] $\ones_n \in \RR^n$ and $\ones_m \in \RR^m$ denote the all-ones vectors.
\item[$\diamond$] $\db_v \in \RR^n$ (resp. $\db_e \in \RR^m$) denotes the characteristic vector of $v \in V(G)$ (resp. $e \in E(G)$). 
\item[$\diamond$] $\Ab = (h_{ij})$ denotes the $n \times m$ {\em unsigned} incidence matrix of $G$, where $h_{ij} = 1$ if $e_j^+ = v_i$ or $e_j^- = v_i$, and $h_{ij} = 0$ otherwise. 
\item[$\diamond$] $\Xb$ is the $n \times n$ diagonal matrix with diagonal $(i,i)$-entries $j_q(v_i, v_i) = r(v_i, q)$. 
\item[$\diamond$] $\Yb$ is the $m \times m$ diagonal matrix with diagonal $(i,i)$-entries $\Prm(e_i)$. 
\item[$\diamond$] $\Zb$ is the $m \times m$ matrix whose diagonal entries are zero, and the $(i,j)$-entries ($i\ne j$) are $\Prm(e_i, e_j)$.
\end{itemize}
We will also use the matrices $\Q$, $\L_q$, $\R_q$, and $\I$ introduced in \S\ref{sec:laplace}.
\begin{Lemma} \label{lem:l3}
We have the equality
\begin{equation} \label{eq:2ndterm}
\sum_{e \in E(G)} \sum_{x \in V(G)} \left(j_q(e^-, x)+j_q(e^+, x)\right)\Prm(e) = 2 \sum_{x,y \in V(G)} j_q(x,y) - \sum_{x \in V(G)} j_q(x,x) \, .
\end{equation}
\end{Lemma}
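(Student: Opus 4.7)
The plan is to cast both sides in matrix form and then collapse the left-hand side using the fundamental relation $\Q \L_q = \I + \R_q$ from \S\ref{sec:laplace} together with Proposition~\ref{prop:starformula}.

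First, I would swap the order of summation on the LHS. A handshaking step rewrites the inner edge sum endpoint-by-endpoint: for each fixed $x$,
\begin{equation*}
\sum_{e \in E(G)}\left( j_q(e^-, x) + j_q(e^+, x) \right)\Prm(e) \;=\; \sum_{p \in V(G)} j_q(p, x)\, \st(p)\,,
\end{equation*}
so the LHS equals $\sum_{p, x} \st(p)\, j_q(p, x) = \st^{\TP} \L_q \ones_n$, with $\st = \Ab \Yb \ones_m$ in the notation just introduced before the Lemma.

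Next I would translate Proposition~\ref{prop:starformula} into a vector identity. Since $r(p, q) = j_q(p, p)$ is exactly the $p$-th diagonal entry of $\L_q$, the vector $\rho$ with entries $\rho(p) = r(p, q)$ coincides with $\Xb \ones_n$, and the proposition becomes $\st = -\Q \rho + 2\, \ones_n - 2\, \db_q$. Substituting,
\begin{equation*}
\st^{\TP} \L_q \ones_n = -\rho^{\TP}\, \Q \L_q \ones_n + 2\, \ones_n^{\TP} \L_q \ones_n - 2\, \db_q^{\TP} \L_q \ones_n \,.
\end{equation*}
The middle term is already $2 \sum_{x, y \in V(G)} j_q(x, y)$, matching the first piece of the RHS; the last term vanishes because $j_q(q, \cdot) \equiv 0$ makes the $q$-th row of $\L_q$ identically zero.

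The main (but mild) obstacle is the first term. Using $\Q \L_q = \I + \R_q$ together with the direct computation $\R_q \ones_n = -n\, \db_q$, one finds $\Q \L_q \ones_n = \ones_n - n\, \db_q$, so that $\rho^{\TP} \Q \L_q \ones_n = \sum_p r(p,q) - n\, r(q,q) = \sum_p j_q(p, p) = \Tr(\L_q)$, since $r(q, q) = 0$. Combining the three contributions gives exactly $2 \sum_{x, y \in V(G)} j_q(x, y) - \sum_{x \in V(G)} j_q(x, x)$, the right-hand side of the Lemma. The key insight is simply to recognize $\rho$ as the diagonal of $\L_q$, which makes Proposition~\ref{prop:starformula} interact cleanly with the identity $\Q \L_q = \I + \R_q$ through a single multiplication by $\ones_n$.
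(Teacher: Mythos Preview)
Your proof is correct and follows essentially the same route as the paper: both write the left-hand side in matrix form, invoke Proposition~\ref{prop:starformula} to replace the vector $\st=\Ab\Yb\ones_m$ by $-\Q\Xb\ones_n+2(\ones_n-\db_q)$, and then collapse using the relation between $\Q$ and $\L_q$. The only cosmetic difference is that the paper writes the scalar as $\ones_n^{\TP}\L_q\st$ and uses $\L_q\Q=\I+\R_q^{\TP}$ together with $\R_q^{\TP}\Xb=\mathbf{0}$, whereas you write it as $\st^{\TP}\L_q\ones_n$ and use $\Q\L_q=\I+\R_q$ together with $\R_q\ones_n=-n\,\db_q$; these are transposes of one another and the arithmetic is identical.
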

\begin{proof}
We start by writing the left-hand side of \eqref{eq:2ndterm} in terms of our matrices:
\[
\sum_{e \in E(G)} \sum_{x \in V(G)} \left(j_q(e^-, x)+j_q(e^+, x)\right)\Prm(e) = \ones_n^{\TP}\L_q \Ab \Yb \ones_m \, . 
\]
By definitions we observe $\Ab \Yb \ones_m = \left(\st(v_1), \ldots , \st(v_n)\right)^{\TP}$. So, by Proposition~\ref{prop:starformula}, we have
\[
\Ab \Yb \ones_m = -\Q \Xb \ones_n + 2\left(\ones_n - \db_q\right)
\]
Therefore, 
\[
\ones_n^{\TP}\L_q \Ab \Yb \ones_m = - \ones_n^{\TP}\L_q\Q \Xb \ones_n + 2 \ones_n^{\TP}\L_q \ones_n -2\ones_n^{\TP}\L_q \db_q
\]
Note that $\L_q \db_q = \mathbf{0}$ and $\L_q\Q = \I+\R_q^{\TP}$ (see \S\ref{sec:laplace}). We also have $\R_q^{\TP} \Xb = \mathbf{0}$. Therefore,
\[
\ones_n^{\TP}\L_q \Ab \Yb \ones_m = 2 \ones_n^{\TP}\L_q \ones_n - \ones_n^{\TP} \Xb \ones_n
\]
which is the right-hand side of \eqref{eq:2ndterm}.
\end{proof}

For our next computation, it is convenient to use the notion of {\em Hadamard--Schur products} of matrices: for two $k \times m$ matrices  $A , B$ the Hadamard--Schur product, denoted by $A \circ B$, is a matrix of the same dimension as $A$ and $B$ with entries given by
$\left(A \circ B\right)_{ij} = (A)_{ij}(B)_{ij}$. 

One useful (and easy to prove) fact about Hadamard--Schur products is the following:
\begin{equation} \label{SchurTrace}
\ones_k^{\TP} \left( A \circ B\right) \ones_m= \Tr \left({A^{\TP}B}\right) \, .\end{equation} 
\begin{Lemma} \label{lem:l4}
We have the equality
\[
\begin{aligned}
&\sum_{e,f \in E(G)}\left(j_q(e^-, f^-)+j_q(e^+, f^+)+j_q(e^-, f^+)+j_q(e^+, f^-)\right) \Prm(e,f) = \\
&=\sum_{e = \{u,v\}}\frac{j_{u}(v, q)^2 + j_{v}(u, q)^2}{\ell(e)} +4 \sum_{x,y \in V(G)} j_q(x,y) -4 \sum_{x \in V(G)} j_q(x,x)
\, .
\end{aligned}
\]
\end{Lemma}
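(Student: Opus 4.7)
My plan is to reindex the double sum to introduce the $\tf$-function, apply Theorem~\ref{thm:star_ve_formula}, and then simplify via ``integration by parts'' (symmetry of $\Q$) together with Rayleigh's law \eqref{eq:Rayleigh_r}. Reindexing and using $\tf(y, e) = \sum_{f \ni y} \Prm(e, f)$, the left-hand side equals $\sum_{e = \{u, v\}} \bigl(A_e(u) + A_e(v)\bigr)$ where $A_e(p) := \sum_y j_q(p, y)\, \tf(y, e)$. Substituting Theorem~\ref{thm:star_ve_formula} decomposes $A_e(p)$ into a ``bulk'' piece $\Prm(e) \sum_y j_q(p, y) S_e(y)$, with $S_e(y)$ the Laplacian-like sum on the contracted graph $G/e$, and a ``boundary'' piece supported on $\{q, u, v\}$; the latter simplifies via the identification $\Prm(e)\, \xi(u, v, u, q)/r(u, v) = j_u(v, q)/\ell(e)$, which follows from \eqref{eq:jid2}.

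Setting $\Phi(p) := j_q(p, u) + j_q(p, v)$ and $g(y) := r(y, q; G/e)$, the symmetry of $\Q$ gives $\sum_y \Phi(y) S_e(y) = -\Phi^{\TP} \Q g$; since $\Phi = \L_q(\db_u + \db_v)$, $g(u) = g(v) = r(v_e, q; G/e)$, and $g(q) = 0$, the identity $\L_q \Q = \I + \R_q^{\TP}$ collapses this to $-2\, r(v_e, q; G/e)$. Rayleigh's law \eqref{eq:Rayleigh_r}, applied symmetrically at $u$ and $v$, then yields
\[
2\, r(v_e, q; G/e) = r(u, q) + r(v, q) - \frac{j_u(v, q)^2 + j_v(u, q)^2}{r(u, v)}.
\]
The boundary contributions are organized by the algebraic identity $j_u(v, q)\, \Phi(u) + j_v(u, q)\, \Phi(v) = r(u, q)^2 + r(v, q)^2 - 2 j_q(u, v)^2$, which factors as $j_u(v, q) \Phi(u) = r(u, q)^2 - j_q(u, v)^2$ (using $j_u(v, q) = r(u, q) - j_q(u, v)$ and $\Phi(u) = r(u, q) + j_q(u, v)$, both via \eqref{eq:jid2}).

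Summing $A_e(u) + A_e(v)$ over all edges and invoking Lemma~\ref{lem:l3} for the $\sigma$-term $\sum_e \Prm(e)(\sigma(u) + \sigma(v)) = 2 \sum_{x, y} j_q(x, y) - \sum_x j_q(x, x)$ with $\sigma(p) := \sum_y j_q(p, y)$, together with the identity
\[
\Sb^{\TP} \Xb \ones_n = -\sum_{e = \{u, v\}} \frac{(r(u, q) - r(v, q))^2}{\ell(e)} + 2 \sum_x j_q(x, x)
\]
(derived from $\Sb = -\Q \Xb \ones_n + 2(\ones_n - \db_q)$, $\Q = \B \D^{-1} \B^{\TP}$, $\R_q^{\TP} \Xb = 0$, and $\L_q \db_q = 0$), every term collapses via \eqref{eq:jid1}, \eqref{eq:jid3}, and the elementary $a^2 + b^2 - (a - b)^2 = 2ab$ to precisely the stated right-hand side. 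The main obstacle is the bookkeeping: several quantities of the forms $(r(u, q) - r(v, q))^2/\ell(e)$, $r(u, v)^2/\ell(e)$, and $j_q(u, v)^2/\ell(e)$ appear with various coefficients in the bulk and boundary pieces, and must interact precisely to eliminate all intermediate cross-terms and leave only $\sum_e (j_u(v, q)^2 + j_v(u, q)^2)/\ell(e)$.
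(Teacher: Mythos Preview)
Your overall architecture is essentially the paper's: reindex through $\tf(y,e)$, feed in Theorem~\ref{thm:star_ve_formula}, collapse the bulk via $\L_q\Q=\I+\R_q^{\TP}$, invoke Rayleigh's law \eqref{eq:Rayleigh_r}, and close with Lemma~\ref{lem:l3}. The intermediate identities you list (the identification $\Prm(e)\,\xi(u,v,u,q)/r(u,v)=j_u(v,q)/\ell(e)$, the collapse $\Phi^{\TP}\Q g=2\,r(v_e,q;G/e)$, the factorization $j_u(v,q)\Phi(u)=r(u,q)^2-j_q(u,v)^2$, and the formula for $\Sb^{\TP}\Xb\ones_n$) are all correct.

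However, the final ``everything collapses'' step fails. If one carries out your plan exactly as written, the result is
\[
\text{RHS}\;-\;\sum_{e=\{u,v\}}\frac{r(u,v)\bigl(r(u,v)+4j_q(u,v)\bigr)}{\ell(e)}\,,
\]
and the extra sum is strictly positive (try the unit triangle: it equals $20/9$). The reason is a subtle mismatch between your reindexing and Theorem~\ref{thm:star_ve_formula}. Your reindexed $\tf(y,e)=\sum_{f\ni y}\Prm(e,f)$ includes the diagonal $f=e$, contributing $\Prm(e,e)=\Prm(e)$ at $y\in\{u,v\}$. But the formula in Theorem~\ref{thm:star_ve_formula} is built from $\Prm(e,f)=\Prm(e)\,\Prm(f;G/e)$ with $f$ ranging over $E(G)\setminus\eb$, and the boundary correction there compensates for the Laplacian defect at $v_e$ but \emph{not} for the missing $f=e$ term; in effect the displayed formula returns $\tf(y,e)-\Prm(e)\bigl(\delta_u(y)+\delta_v(y)\bigr)$ rather than $\tf(y,e)$. (One can check this already on a single edge or a triangle.) The uncaptured piece in your $A_e(u)+A_e(v)$ is exactly $\Prm(e)(\Phi(u)+\Phi(v))=r(u,v)(r(u,v)+4j_q(u,v))/\ell(e)$, which is the diagonal summand $J(e,e)\Prm(e,e)$ of the original double sum.

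The paper sidesteps this by splitting the double sum into diagonal and off-diagonal parts \emph{before} invoking Theorem~\ref{thm:star_ve_formula} (equation~\eqref{eq:diagoffdiag}): the off-diagonal part is packaged as $\Ab^{\TP}\L_q\Ab\Zb$ with $\Zb$ having zero diagonal, so $(\Ab\Zb\db_e)_y=\tf(y,e)-\Prm(e)(\delta_u(y)+\delta_v(y))$ matches Theorem~\ref{thm:star_ve_formula} exactly, and the diagonal is handled by hand. Your proof becomes correct once you separate the $e=f$ contribution and add $\sum_e\Prm(e)(\Phi(u)+\Phi(v))$ back in at the end; this is precisely the term that cancels your leftover.
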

\begin{proof}
We first write:
\begin{equation} \label{eq:diagoffdiag}
\begin{aligned}
&\sum_{e,f \in E(G)} \left(j_q(e^-, f^-)+j_q(e^+, f^+)+j_q(e^-, f^+)+j_q(e^+, f^-)\right) \Prm(e,f) = \\
&= \sum_{e\in E(G)} \left(j_q(e^-, e^-)+j_q(e^+, e^+)+2j_q(e^-, e^+)\right) \Prm(e)\\
&+ \sum_{\substack{e,f \in E(G)\\ e \ne f}} \left(j_q(e^-, f^-)+j_q(e^+, f^+)+j_q(e^-, f^+)+j_q(e^+, f^-)\right) \Prm(e,f)  \, .
\end{aligned}
\end{equation}
We write the second sum in \eqref{eq:diagoffdiag} in terms of our matrices:
\begin{equation} \label{eq:offdiag}
\begin{aligned}
\sum_{\substack{e,f \in E(G)\\ e \ne f}} &\left(j_q(e^-, f^-)+j_q(e^+, f^+)+j_q(e^-, f^+)+j_q(e^+, f^-)\right) \Prm(e,f) \\
&= \ones_m^{\TP}\left( \left(\Ab^{\TP} \L_q \Ab \right) \circ \Zb\right) \ones_m \, .
\end{aligned}
\end{equation}
By \eqref{SchurTrace}, we know
\[
\ones_m^{\TP}\left( \left(\Ab^{\TP} \L_q \Ab \right) \circ \Zb\right) \ones_m = \Tr \left( \Ab^{\TP} \L_q \Ab \Zb \right) = \sum_{e\in E(G)} \db_e^{\TP} \left(\Ab^{\TP} \L_q \Ab \Zb \right) \db_e\, .
\]
By definitions, one observes $\Ab \Zb \db_e = \left(\tf(v_1, e), \ldots , \tf(v_n, e)\right)^{\TP}$. Let 
\[
e=\{u,v\} \ \ , \ \  \beta_u = \frac{\xi(u,v,u,q)}{r(u,v)}  \ \ , \ \    \beta_v = \frac{\xi(v,u,v,q)}{r(u,v)}  \ \ , \ \   \Prm(e) = \frac{r(u,v)}{\ell(e)} \, .
\] 
Theorem~\ref{thm:star_ve_formula} states:
\[
\Ab \Zb \db_e = \Prm(e) \left(-\Q \widetilde{\Xb} \ones_n +2\left(\ones_n - \db_q - \beta_u \db_u - \beta_v \db_v\right) \right) \, .
\]
Here $\widetilde{\Xb}$ is the $n \times n$ diagonal matrix with diagonal $(i,i)$-entries $j_q(v_i, v_i; G/e)$ for $v_i \not \in \{u,v\}$. The diagonal entries corresponding to both $u$ and $v$ are $j_q(v_e, v_e; G/e)$. Thus we find
\[
\db_e^{\TP} \left(\Ab^{\TP} \L_q \Ab \Zb \right) \db_e = \Prm(e) \left(\db_u+\db_v\right)^{\TP} \L_q \left(-\Q \widetilde{\Xb} \ones_n  +2\left(\ones_n - \db_q - \beta_u \db_u - \beta_v \db_v\right) \right) \, .
\]
Note that $\L_q \db_q = \mathbf{0}$, $\L_q\Q = \I+\R_q^{\TP}$, and $\R_q^{\TP} \widetilde{\Xb} = \mathbf{0}$. We obtain
\begin{equation} \label{eq:edgeterm}
\begin{aligned}
&\db_e^{\TP} \left(\Ab^{\TP} \L_q \Ab \Zb \right) \db_e  =\Prm(e) \left(\db_u+\db_v\right)^{\TP} \left(-\widetilde{\Xb} \ones_n 
+ 2  \L_q \left(\ones_n -    \beta_u \db_u +\beta_v \db_v\right) \right) \\
 &= -\Prm(e) \left(j_q(v_e,v_e; G/e)+j_q(v_e,v_e; G/e)\right) +2 \Prm(e) \! \! \sum_{x \in V(G)}\left(j_q(u,x) + j_q(v, x)\right)\\
 &\ \ \ -2 \left(\frac{\xi(u,v,u,q)}{\ell(e)}\left(j_q(u,u) + j_q(v, u)\right)
+\frac{\xi(v,u,v,q)}{\ell(e)} \left(j_q(u,v) + j_q(v, v)\right)\right)
\, .
\end{aligned}
\end{equation}
We now use our generalized Rayleigh's law \eqref{eq:Rayleigh_r} twice and write everything in terms of invariants of $G$:
\begin{equation} \label{eq:ral}
\begin{aligned}
j_q(v_e,v_e; G/e) &= j_q(u,u) - \frac{\xi(u,q,u,v)^2}{r(u,v)} =  j_q(u,u) - \frac{j_u(v,q)^2}{r(u,v)}\\
j_q(v_e,v_e; G/e) &= j_q(v,v) - \frac{\xi(v,q,u,v)^2}{r(u,v)} = j_q(v,v) - \frac{j_v(u,q)^2}{r(u,v)} \,.
\end{aligned}
\end{equation}
We also use the definition of cross ratios to compute:
\begin{equation} \label{eq:crs}
\xi(u,v,u,q) = j_q(u,u)- j_q(u,v) \   ,  \  
\xi(v,u,v,q) = j_q(v,v)- j_q(u,v) \, .
\end{equation}
Putting together \eqref{eq:diagoffdiag}, \eqref{eq:offdiag}, \eqref{eq:edgeterm}, \eqref{eq:ral}, \eqref{eq:crs}, with a simple computation we obtain:
\begin{equation} \label{eq:almostdone1}
\begin{aligned}
&\sum_{e,f \in E(G)} \left(j_q(e^-, f^-)+j_q(e^+, f^+)+j_q(e^-, f^+)+j_q(e^+, f^-)\right) \Prm(e,f) =\\
&= \sum_{e = \{u,v\}}\frac{j_{u}(v, q)^2 + j_{v}(u, q)^2}{\ell(e)} + 2  \sum_{e = \{u,v\}}    \sum_{x \in V(G)} \Prm(e)\left( j_q(u,x)+j_q(v,x)\right) \\
&+ 2 \sum_{e =\{u,v\}} \Prm(e) j_q(u,v)
-2  \sum_{e = \{u,v\}} \frac{j_q(u,u)^2 +j_q(v,v)^2 -2j_q(u,v)^2}{\ell(e)} \,.
\end{aligned}
\end{equation}
By Lemma~\ref{lem:l3}, the second term in \eqref{eq:almostdone1} is simplified as 
\begin{equation}\label{eq:almostdone2}
 \sum_{e = \{u,v\}}    \sum_{x \in V(G)} \Prm(e)\left( j_q(u,x)+j_q(v,x)\right) = 2\sum_{x,y \in V(G)} j_q(x,y) - \sum_{x \in V(G)} j_q(x,x) \, .
\end{equation}
The third and fourth terms in \eqref{eq:almostdone1} are simplified as follows:
\begin{equation} \label{eq:almostdone3}
\begin{aligned}
& \sum_{e =\{u,v\}} \Prm(e) j_q(u,v)
-  \sum_{e = \{u,v\}} \frac{j_q(u,u)^2 +j_q(v,v)^2 -2j_q(u,v)^2}{\ell(e)} = \\
&= \!\!\! \sum_{e =\{u,v\}} \!\! \left( \frac{j_q(u,u)+j_q(v,v)-2j_q(u,v)}{\ell(e)}j_q(u,v) -\frac{j_q(u,u)^2 +j_q(v,v)^2 -2j_q(u,v)^2}{\ell(e)}  \right)\\
&= - \sum_{e =\{u,v\}}\left( j_q(u,u) \frac{j_q(u,u)- j_q(u,v)}{\ell(e)} + j_q(v,v) \frac{j_q(v,v)- j_q(u,v)}{\ell(e)}\right)\\
&= -\sum_{x \in V(G)} j_q(x,x) \sum_{f = \{x, y\}} \frac{j_q(x,x) -j_q(x,y)}{\ell(f)} 
= -\sum_{x \in V(G)} j_q(x,x) \Delta(j_q(x, \cdot))(x) \\
&= -\sum_{x \in V(G)} j_q(x,x) \left(\delta_x(x) - \delta_q(x) \right) = -\sum_{x \in V(G)} j_q(x,x)
\, .
\end{aligned}
\end{equation}
For the first equality we used \eqref{eq:jid4}. The third equality is by a (generalized) handshaking lemma. The result now follows by putting together \eqref{eq:almostdone1}, \eqref{eq:almostdone2}, and \eqref{eq:almostdone3}. 
\end{proof}

\subsection{Average of energy levels, a variation} \label{sec:envar}
For our main application, we will need the following slight variation of Theorem~\ref{thm:energylevel}. Let $\pi \colon C_1(G, \RR) \twoheadrightarrow H_1(G, \RR)$ denote the orthogonal projection (as defined in \S\ref{sec:proj}).

\begin{Definition} \label{def:center}
We define the {\em center} of a rooted spanning tree $(T, q)$ to be
\[
\sigma_T \coloneqq 
\frac{1}{2}\sum_{e \in \Tc_q} {\pi(e)}\, .
\] 
\end{Definition}
\begin{Theorem} \label{thm:avgcenters}
We have the equality
\[
\frac{1}{w(G)}\sum_T w(T)[\sigma_T, \sigma_T] 
= 
\frac{1}{4}\sum_{e = \{u,v\}} \left(r(u,v) - \frac{j_{u}( v, q)^2 + j_{v}( u, q)^2}{\ell(e)}\right) \, ,
\]
where the sum on the left is over all spanning trees $T$ of $G$, and the sum on the right is over all edges $e \in E(G)$.
\end{Theorem}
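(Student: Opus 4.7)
The plan is to reduce the identity to Theorem~\ref{thm:energylevel} by expanding $[\sigma_T,\sigma_T]$ using that $\pi$ is the orthogonal projection of $(C_1(G,\RR),[\cdot,\cdot])$ onto $H_1(G,\RR)$, and then averaging over spanning trees edge-by-edge.

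First, by Definition~\ref{def:center},
\[
4\,[\sigma_T,\sigma_T] \;=\; \sum_{e,f \in \Tc_q} [\pi(e),\pi(f)].
\]
Since $\pi$ is an orthogonal projection and $\pi'=\I-\pi$ is the complementary projection onto $H_1(G,\RR)^{\perp}$, we have $[\pi(e),\pi(f)] = [e,f] - [\pi'(e),\pi'(f)] = [e,f] - [e,\pi'(f)]$. From \S\ref{sec:proj}, the $e$-coefficient of $\pi'(f)$ in the basis $\Oc$ is $\F'(e,f)=\xi(e^-,e^+,f^-,f^+)/\ell(e)$, so $[e,\pi'(f)] = \ell(e)\,\F'(e,f) = \xi(e^-,e^+,f^-,f^+)$ (and this is orientation-independent because $\xi$ is invariant under simultaneous sign flips of the pairs, matching the bilinearity of $[\cdot,\cdot]$). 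Therefore
\[
[\pi(e),\pi(f)] \;=\; [e,f] - \xi(e^-,e^+,f^-,f^+).
\]

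Next I sum over $e,f \in \Tc_q$. The spanning tree $T$ uses each underlying edge exactly once, so $[e,f]$ contributes $\ell(e)$ when $e=f$ and $0$ otherwise, giving $\sum_{e\in\Tc_q}[e,e] = \ell(T)\coloneqq \sum_{e\in T}\ell(e)$. The cross-ratio sum is by definition $\boldsymbol{\varrho}(T,q)$. Hence
\[
4\,[\sigma_T,\sigma_T] \;=\; \ell(T) - \boldsymbol{\varrho}(T,q).
\]

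Now I take the weighted average over spanning trees. Using \eqref{eq:Pe_sum} and $\Prm(e)=r(e^-,e^+)/\ell(e)$,
\[
\frac{1}{w(G)}\sum_T w(T)\,\ell(T) \;=\; \sum_{e\in E(G)} \ell(e) \sum_{T\ni e}\frac{w(T)}{w(G)} \;=\; \sum_{e\in E(G)} \ell(e)\,\Prm(e) \;=\; \sum_{e=\{u,v\}} r(u,v).
\]
Combining this with Theorem~\ref{thm:energylevel} applied to the second summand yields
\[
\frac{4}{w(G)}\sum_T w(T)\,[\sigma_T,\sigma_T] \;=\; \sum_{e=\{u,v\}} r(u,v) - \sum_{e=\{u,v\}}\frac{j_u(v,q)^2 + j_v(u,q)^2}{\ell(e)},
\]
and dividing by $4$ gives the claim. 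The only non-routine ingredient is Theorem~\ref{thm:energylevel}; everything else is a bookkeeping exercise using orthogonality of $\pi$ and the probabilistic interpretation of $\Prm(e)$ via random spanning trees. Consequently the main obstacle in the whole argument has already been overcome in the previous section.
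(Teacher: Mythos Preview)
Your argument is correct and follows essentially the same route as the paper's proof: expand $[\sigma_T,\sigma_T]$ in terms of $[\pi(e),\pi(f)]$, rewrite these via cross ratios to obtain $4[\sigma_T,\sigma_T]=\ell(T)-\boldsymbol{\varrho}(T,q)$, and then average using $\Prm(e)=r(e^-,e^+)/\ell(e)$ together with Theorem~\ref{thm:energylevel}. Your uniform identity $[\pi(e),\pi(f)]=[e,f]-\xi(e^-,e^+,f^-,f^+)$ is a mild streamlining of the paper's presentation, which instead separates the diagonal terms $[\pi(e),\pi(e)]=\F(e)\ell(e)$ from the off-diagonal terms and then re-inserts the diagonal cross ratios $\xi(e^-,e^+,e^-,e^+)=r(e^-,e^+)$ to recover the full $\boldsymbol{\varrho}(T,q)$; the two computations are equivalent.
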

\begin{proof}
Using \eqref{eq:Fef} and Definition~\ref{def:enlev} we compute:
\begin{equation} \label{eq:sigmaen}
\begin{aligned}
&[\sigma_T, \sigma_T] = \frac{1}{4} [\sum_{e \in \Tc_q} {\pi(e)}, \sum_{e \in \Tc_q} {\pi(e)}]= \frac{1}{4} \left( \sum_{e \in \Tc_q} [\pi(e), \pi(e)] + \sum_{\substack{e,f \in \Tc_q\\ e \ne f}} [\pi(e), \pi(f)]\right) \\
&=\frac{1}{4} \left( \! \sum_{e \in T} \F(e) \ell(e) + \!\!\! \sum_{\substack{e,f \in \Tc_q\\ e \ne f}} \F(e,f) \ell(e) \! \right) \! \! =\frac{1}{4} \left(\! \sum_{e \in T} \F(e) \ell(e) - \!\!\! \sum_{\substack{e,f \in \Tc_q\\ e \ne f}} \xi(e^-, e^+, f^-, f^+) \right) \\
&=\frac{1}{4} \left( \sum_{e \in T} \F(e) \ell(e) +\sum_{e \in \Tc_q} r(e^-, e^+) - \!\!\!\sum_{\substack{e,f \in \Tc_q}} \xi(e^-, e^+, f^-, f^+) \right) \\
&=\frac{1}{4} \sum_{\substack{e \in T\\ e=\{u,v\}}} \left(\F(e) \ell(e) +r(u, v)\right) - \frac{1}{4} {\boldsymbol{\varrho}}(T, q) \, .  \\
\end{aligned}
\end{equation}
By Definition~\ref{def:foster} and \eqref{eq:Pe_sum} and by changing the order of summations, we compute
\begin{equation} \label{eq:rsimplify}
\begin{aligned}
\sum_T \frac{w(T)}{w(G)} &\sum_{\substack{e \in T\\ e=\{u,v\}}} \left(\F(e) \ell(e) +r(u, v)\right) = \sum_{e=\{u,v\}}  \left(\F(e) \ell(e) +r(u, v)\right) \sum_{T \ni e} \frac{w(T)}{w(G)}\\
&= \sum_{e=\{u,v\}}  \left(\F(e) \ell(e) +r(u, v)\right)  \Prm(e)=\sum_{e=\{u,v\}} r(u,v) \, .
\end{aligned}
\end{equation}
The result now follows from \eqref{eq:sigmaen}, \eqref{eq:rsimplify}, and Theorem~\ref{thm:energylevel}.
\end{proof}
\section{Combinatorics of Voronoi polytopes}
\label{sec:comb}

Throughout this section we fix a metric graph $\Gamma$ and a model $G$. We are interested in the combinatorics of the lattice $\left(H_1(G, \ZZ), [\cdot , \cdot]\right)$. More specifically, we study the combinatorics of the Voronoi polytopes $\Vor(\lambda)$ (as defined in \S\ref{sec:voronoi}) for the lattice $\left(H_1(G, \ZZ), [\cdot , \cdot]\right)$. Since $\Vor ( \lambda ) = \Vor(0)+ \lambda$ for all $\lambda \in H_1(G, \ZZ)$, it suffices to understand the Voronoi polytope $\Vor(0)$ around the origin.

Let $\Vol\left(\cdot \right)$ denote the volume measure induced by the bilinear form $[ \cdot , \cdot ]$ on $H_1(G, \RR)$. Let $w(G)$ be the weight of $G$ (as in \S\ref{sec:proj}). Put $g=\dim_\RR H_1(G,\RR)$.

\begin{Lemma} \label{lem:volvor}
$\Vol\left(\Vor(0)\right)  = \sqrt{w(G) }$ .
\end{Lemma}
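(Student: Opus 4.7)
The identity should be established in two essentially independent steps: first, interpreting $\Vol(\Vor(0))$ as the covolume of the lattice $H_1(G,\ZZ)$ in $H_1(G,\RR)$; second, evaluating this covolume via the classical (weighted) matrix--tree theorem applied through Cauchy--Binet to an integral basis of $H_1(G,\ZZ)$.

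\textbf{Step 1: Voronoi polytope as a fundamental domain.} As already observed in \S\ref{sec:voronoi}, the translates $\{\Vor(0) + \lambda \colon \lambda \in H_1(G,\ZZ)\}$ tile $H_1(G,\RR)$ and overlap only on boundaries, which have measure zero with respect to the volume form induced by $[\cdot,\cdot]$. Consequently $\Vor(0)$ is a fundamental domain for the lattice action, and hence
\[
\Vol(\Vor(0)) = \Covol\!\bigl(H_1(G,\ZZ),\,[\cdot,\cdot]\bigr) = \sqrt{\det M},
\]
where $M = ([\gamma_i,\gamma_j])_{i,j}$ is the Gram matrix with respect to any $\ZZ$-basis $\gamma_1,\dots,\gamma_g$ of $H_1(G,\ZZ)$.

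\textbf{Step 2: Evaluating the Gram determinant.} Fix an orientation $\Oc=\{e_1,\dots,e_m\}$ and let $N$ denote the $m\times g$ matrix whose columns express $\gamma_1,\dots,\gamma_g$ in the standard basis of $C_1(G,\ZZ)$ induced by $\Oc$. Since the bilinear form on $C_1(G,\RR)$ is given by the diagonal matrix $\D$ of edge lengths, one has $M = N^{\TP}\D N$. Applying the Cauchy--Binet formula yields
\[
\det M \;=\; \sum_{\substack{S\subset E(G)\\|S|=g}} \det(N_S)^2 \prod_{e\in S}\ell(e),
\]
where $N_S$ is the $g\times g$ submatrix of $N$ whose rows are indexed by $S$.

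\textbf{Step 3: Identifying the surviving terms.} The key combinatorial input is that $\det(N_S)\neq 0$ precisely when $T\coloneqq E(G)\setminus S$ is a spanning tree of $G$, in which case $\det(N_S)=\pm 1$. This is classical: the fundamental cycles $\{\circu(T,e)\}_{e\notin T}$ form a $\ZZ$-basis of $H_1(G,\ZZ)$, and their projection to the coordinates indexed by $S=E(G)\setminus T$ is the identity matrix up to sign; any other $\ZZ$-basis differs from this one by a unimodular transformation. Substituting this in, the sum collapses to
\[
\det M \;=\; \sum_{T \text{ spanning tree}} \prod_{e\notin T}\ell(e) \;=\; w(G),
\]
and taking the square root gives the lemma.

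\textbf{Expected main obstacle.} The only genuinely non-formal point is the identification of the nonzero Cauchy--Binet contributions with the spanning trees of $G$, together with the assertion that $|\det(N_S)|=1$ for each such $S$. This is standard but requires a short argument about matroid bases of the cycle space and the unimodularity of the transition between $\ZZ$-bases of $H_1(G,\ZZ)$; everything else reduces to bookkeeping.
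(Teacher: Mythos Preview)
Your proof is correct and follows essentially the same route as the paper: identify $\Vol(\Vor(0))$ with the covolume of $H_1(G,\ZZ)$, then evaluate the Gram determinant via Cauchy--Binet and the spanning-tree interpretation of the nonvanishing minors. The only cosmetic difference is that the paper fixes from the outset the fundamental-cycle basis $\{\circu(T,e):e\notin T\}$ attached to a single spanning tree $T$ and invokes total unimodularity of the resulting $g\times m$ matrix directly, whereas you work with an arbitrary $\ZZ$-basis and deduce $|\det(N_S)|=1$ by comparing with that basis at the end.
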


\begin{proof}
The Voronoi polytopes $\{\Vor(0)+ \lambda \colon \lambda \in H_1(G, \ZZ)\}$ induce a periodic polytopal decomposition of $H_1(G, \RR)$. Therefore $\Vor(0)$, up to some identifications on its boundary, gives a fundamental domain for the translation action of $H_1(G, \ZZ)$ on $H_1(G, \RR)$. Therefore $\Vol\left(\Vor(0)\right) = \sqrt{\det(\Gb)}$ where $\Gb$ is any Gram matrix for the lattice $\left(H_1(G, \ZZ), [\cdot , \cdot]\right)$. 

Fix an orientation $\Oc$ on $G$, and fix a spanning tree $T$ of $G$. It is well-known that $\{\circu(T,e) \colon e \in  \Oc \backslash T\}$ is a basis for $H_1(G, \RR)$.  Let $\Cb_T$ denote
the totally unimodular $g \times m$ matrix whose rows correspond to these basis elements. Then a Gram matrix for the lattice $\left(H_1(G, \ZZ), [\cdot , \cdot]\right)$ is $\Gb_T = \Cb_T \D \Cb_T^{\TP}$. The result now follows from a standard application of the Cauchy--Binet formula for determinants.
\end{proof}

\begin{Remark}
A geometric proof of Lemma~\ref{lem:volvor} can be found in \cite[\S5]{abks}. 
\end{Remark}

Let $\pi \colon C_1(G, \RR) \twoheadrightarrow H_1(G, \RR)$ denote the orthogonal projection (as defined in \S\ref{sec:proj}). Each finite collection of $1$-chains $V = \{\vb_1 , \ldots , \vb_k\} \subset C_1(G,\RR)$ generates a {\em zonotope} $\Z(V)$ defined as
\[
\Z(V) = \left\{ \sum_{i=1}^k \alpha_i \vb_i \colon -1 \leq \alpha_i \leq 1 \right\} \subset C_1(G,\RR) \, .
\]

\begin{Proposition} \label{prop:VorZon}
Fix an orientation $\Oc$ on $G$. Then
\[
\Vor(0) = \frac{1}{2} \, \Z \left(\{ \pi(e) \colon e \in \Oc\}\right) \, .
\]

\end{Proposition}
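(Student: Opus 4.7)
The plan is to verify the equality in two steps: first establish the inclusion $\frac{1}{2}\Z(V) \subseteq \Vor(0)$ by a direct check of the defining inequalities, then show that both bodies have the same $g$-dimensional volume, whence the equality follows.

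For the inclusion, take $z = \frac{1}{2}\sum_{e \in \Oc} \alpha_e \pi(e)$ with $|\alpha_e| \leq 1$, and an arbitrary $\lambda = \sum_{e \in \Oc} n_e e \in H_1(G,\ZZ) \subseteq C_1(G,\ZZ)$. Self-adjointness of $\pi$ together with $\pi(\lambda)=\lambda$ gives $[\pi(e),\lambda]=[e,\lambda]=n_e\ell(e)$, so
\[
2[z,\lambda] = \sum_{e\in \Oc} \alpha_e n_e \ell(e), \qquad [\lambda,\lambda] = \sum_{e\in \Oc} n_e^2\ell(e).
\]
The defining inequality $2[z,\lambda]\leq[\lambda,\lambda]$ now follows termwise from $\alpha_e n_e \leq |n_e| \leq n_e^2$, using $|\alpha_e|\leq 1$ and $n_e\in \ZZ$. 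Hence $z \in \Vor(0)$.

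For the volumes, Lemma~\ref{lem:volvor} already gives $\Vol(\Vor(0))=\sqrt{w(G)}$. On the other side, the standard zonotope volume formula reads
\[
\Vol\!\left(\tfrac{1}{2}\Z(V)\right) = \sum_{\substack{J\subseteq \Oc\\ |J|=g}} \bigl|\det(\pi(e))_{e \in J}\bigr|,
\]
where $|\det(\cdot)|$ is the $g$-dimensional volume in $(H_1(G,\RR),[\cdot,\cdot])$ of the parallelepiped on the listed vectors. By matroid duality, a summand is nonzero precisely when $T\coloneqq \Oc\setminus J$ is a spanning tree of $G$. For such a $T$, I would expand $\pi(e)$, $e \in J$, in the basis $\{\circu(T,e'):e'\in J\}$ of $H_1(G,\RR)$ and record the expansion coefficients as a $g\times g$ matrix $\widetilde M$. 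The crux is to show
\[
|\det \widetilde M| = \frac{w(T)}{w(G)}\,.
\]
Granting this and using $\det(\Gb_T) = w(G)$ from the proof of Lemma~\ref{lem:volvor}, one obtains $\bigl|\det(\pi(e))_{e\in J}\bigr| = |\det \widetilde M|\cdot \sqrt{\det(\Gb_T)} = w(T)/\sqrt{w(G)}$, and summing over spanning trees yields $\Vol(\tfrac{1}{2}\Z(V)) = w(G)/\sqrt{w(G)} = \sqrt{w(G)}$.

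The main obstacle is the identity $|\det \widetilde M|=w(T)/w(G)$, which I would derive by the following matrix manipulation. Let $P$ be the matrix of $\pi$ in the basis $\Oc$. Since the image of $\pi$ is spanned by the columns of the $m\times g$ matrix $\Cb_T^{\TP}$, one can factor $P = \Cb_T^{\TP} M$ for a unique $g \times m$ matrix $M$. The fact that $\pi$ is the identity on $H_1(G,\RR)$ amounts to $P\Cb_T^{\TP}=\Cb_T^{\TP}$, whence $M \Cb_T^{\TP} = I_g$. Self-adjointness of $\pi$ translates into $P^{\TP}\D = \D P$, which after right-multiplication by $\Cb_T^{\TP}$ gives $M^{\TP}\Gb_T = \D \Cb_T^{\TP}$, equivalently $M = \Gb_T^{-1}\Cb_T \D$. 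Selecting the columns of $M$ indexed by $J$, one finds $\widetilde M = \Gb_T^{-1}\,(\Cb_T)_J\,\D_J$, where $(\Cb_T)_J$ denotes the $g\times g$ submatrix consisting of those columns and $\D_J=\mathrm{diag}(\ell(e))_{e\in J}$. By the structure of fundamental circuits, $(\Cb_T)_J = I_g$: indeed, the $e'$-th row of $\Cb_T$ is $\circu(T,e')$, whose entry at any $e\in J$ is $\delta_{e,e'}$ (its other nonzero entries lie on $T$). Hence $|\det \widetilde M| = \prod_{e\in J}\ell(e)/\det(\Gb_T) = w(T)/w(G)$. Combined with the inclusion from the first step, the two convex bodies must be equal.
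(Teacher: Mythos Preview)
Your argument is correct. The paper itself does not prove this proposition; it simply cites \cite[Proposition~5.2]{os} and \cite[Theorem~2]{ssv} for the result, noting that it is ``well-known''. Your approach---first establishing the inclusion $\frac{1}{2}\Z\subseteq\Vor(0)$ via the termwise inequality $\alpha_e n_e\le |n_e|\le n_e^2$ (using $|\alpha_e|\le1$ and integrality of $n_e$), then matching volumes via the zonotope volume formula---yields a clean self-contained proof. The derivation of $M=\Gb_T^{-1}\Cb_T\D$ from idempotence and $[\cdot,\cdot]$-self-adjointness of $\pi$ is correct, and the observation $(\Cb_T)_J=I_g$ follows exactly as you say from the fact that the only non-tree edge appearing in $\circu(T,e')$ is $e'$ itself. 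Your computation $\bigl|\det(\pi(e))_{e\in J}\bigr|=w(T)/\sqrt{w(G)}$ incidentally gives an independent proof of Lemma~\ref{lem:CT}(b), which the paper likewise obtains only by citation. The sole implicit step you might state explicitly is that an inclusion of two compact convex bodies of equal positive volume forces equality; this is standard.
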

\begin{proof}
This is well-known. To our knowledge this was first proved in \cite[Proposition~5.2]{os}. See also \cite[Theorem 2]{ssv} for a different proof. 
\end{proof}

Fix an orientation $\Oc$ on $G$. Let $T$ be a spanning tree of $G$. 
We define
\[
\C_T \coloneqq \frac{1}{2} \Z \left( \{ \pi(e) \colon e \in \Oc \backslash T \}\right) \, .
\]

Note that $\C_T$ is independent of the choice of $\Oc$.

\begin{Lemma} \phantomsection \label{lem:CT}
\begin{itemize}
\item[]
\item[(a)] $\C_T$ is a $g$-dimensional {\em parallelotope}. Equivalently, 
$\{\pi(e) \colon e \in  \Oc \backslash T\}$
is a basis for $H_1(G, \RR)$.
\item[(b)] $\Vol(\C_T) = w(T)/\sqrt{w(G)}$ . 
\end{itemize}
\end{Lemma}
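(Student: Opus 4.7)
The plan is to handle (a) by a direct linear-independence argument using the structure of the cocycle space, and (b) by a matrix computation in a carefully chosen basis of $H_1(G,\RR)$.

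For part (a), since $|\Oc \setminus T| = g = \dim_\RR H_1(G,\RR)$, it suffices to show linear independence. Suppose $\omega = \sum_{e \in \Oc \setminus T} \alpha_e\, e$ satisfies $\pi(\omega) = 0$. Then $\omega$ lies in $\ker \pi = H_1(G,\RR)^\perp$, the cocycle space, which is freely generated by the fundamental cocircuits $\{\cocirc(T, f) : f \in T\}$. The key combinatorial input is that $\cocirc(T, f)$ contains $f$ with coefficient $\pm 1$ and contains no other tree edge. Writing $\omega = \sum_{f \in T} \beta_f\, \cocirc(T, f)$ and reading off the coefficient of each tree edge (which must vanish in $\omega$) forces $\beta_f = 0$ for every $f \in T$, whence $\omega = 0$ and all $\alpha_e$ vanish.

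For part (b), I would work in the basis $\{\circu(T, f) : f \in \Oc \setminus T\}$ of $H_1(G,\RR)$, whose Gram matrix $\Gb_T = \Cb_T\, \D\, \Cb_T^\TP$ satisfies $\det \Gb_T = w(G)$ by the Cauchy--Binet computation already used in the proof of Lemma~\ref{lem:volvor}. The orthogonal projection formula gives, for $v \in C_1(G,\RR)$, that the coordinate vector of $\pi(v)$ in this basis is $\alpha = \Gb_T^{-1}\, \Cb_T\, \D\, v$. Specializing to $v = \db_e$ with $e \in \Oc \setminus T$, the critical combinatorial observation is that the $e$-column of $\Cb_T$ is the standard basis vector $\mathbf{1}_e \in \RR^g$: each non-tree edge appears in exactly one fundamental cycle (namely its own) with coefficient $+1$. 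Consequently the coordinate vector of $\pi(e)$ simplifies to $\ell(e)\, \Gb_T^{-1}\, \mathbf{1}_e$.

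Assembling these $g$ coordinate vectors as columns yields the $g \times g$ matrix $\Gb_T^{-1}\, \D'$, where $\D'$ is the diagonal matrix with diagonal entries $\ell(e)$ for $e \in \Oc \setminus T$. Its determinant is $w(T)/w(G) \neq 0$, giving an alternative confirmation of (a). The Gram matrix of $\{\pi(e) : e \in \Oc \setminus T\}$ is then
\[
\Sb_T \;=\; (\Gb_T^{-1}\, \D')^\TP\, \Gb_T\, (\Gb_T^{-1}\, \D') \;=\; \D'\, \Gb_T^{-1}\, \D',
\]
with $\det \Sb_T = w(T)^2/w(G)$. Since $\C_T$ is the centrally symmetric parallelotope $\{\sum_{e \in \Oc \setminus T} t_e\, \pi(e) : -\tfrac{1}{2} \leq t_e \leq \tfrac{1}{2}\}$ (with edge vectors $\pi(e)$), its volume in the Euclidean metric $[\cdot,\cdot]$ is $\sqrt{\det \Sb_T} = w(T)/\sqrt{w(G)}$, proving (b).

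The only delicate ingredient is the matrix identity $\Cb_T\, \db_e = \mathbf{1}_e$ for $e \in \Oc \setminus T$, combined with correctly deriving the projection formula $\alpha = \Gb_T^{-1}\, \Cb_T\, \D\, v$ from the defining orthogonality of $\pi$; once these are in place, the remainder is a short manipulation of determinants.
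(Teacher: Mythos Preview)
Your proof is correct. For part (a), the paper takes a slightly different and shorter route: it observes directly that $[\pi(e),\circu(T,e')]=[e,\circu(T,e')]=\ell(e)\,\delta_e(e')$ for $e,e'\in\Oc\setminus T$, so the $\pi(e)$ are (up to scaling) dual to the fundamental-cycle basis and hence linearly independent. Your cocircuit argument is equally valid, though note that your part (b) computation already contains the paper's (a) argument in disguise: the identity $\Cb_T\,\D\,\db_e=\ell(e)\,\mathbf{1}_e$ is precisely the pairing $[\circu(T,f),e]=\ell(e)\,\delta_{f,e}$, which is why you obtain an ``alternative confirmation of (a)'' along the way. For part (b) the paper simply cites \cite[Proposition~5.4]{abks}, whereas you supply a self-contained Gram-matrix computation; your approach has the advantage of being explicit and reusing the Cauchy--Binet identity $\det\Gb_T=w(G)$ already established in Lemma~\ref{lem:volvor}.
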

\begin{proof}
(a) Note that, for $e,e' \in \Oc \backslash T$, we have $[\pi(e) , \circu(T,e')] = \ell(e) \delta_e(e')$. It is well-known that $\{\circu(T,e) \colon e \in  \Oc \backslash T\}$ is a basis for $H_1(G, \RR)$.

(b) This is proved in \cite[Proposition~5.4]{abks}.  
\end{proof}

Now fix $q \in V(G)$. Recall (see \S\ref{sec:weightedgraph}) that the rooted spanning tree $(T, q)$ comes with a preferred orientation $\Tc_q \subseteq \mathbb{E}(G)$ (where edges are oriented {away from $q$} on the spanning tree $T$). Let $\sigma_T \coloneqq \frac{1}{2}\sum_{e \in \Tc_q} {\pi(e)}$ be the center of $(T,q)$ (Definition~\ref{def:center}).
We now state and prove the main result of this section.

\begin{Theorem} \label{thm:vordecomp}
The collection of parallelotopes 
\[\{\sigma_T + \C_T \colon T \text{ is a spanning tree of } G\}\] 
induces a polytopal decomposition of $\Vor(0)$:
\begin{itemize}
\item[(i)] $\Vor(0) = \bigcup_{T} \left(\sigma_T + \C_T \right)$, the union being over all spanning trees $T$ of $G$,
\item[(ii)] If $T \ne T'$ are two spanning trees such that 
\[ 
\Fc \coloneqq 
\left(\sigma_T + \C_T \right) \cap \left(\sigma_{T'} + \C_{T'} \right)\]
 is nonempty, then 
$\mathcal{F} $ is a face of both $\left(\sigma_T + \C_T \right)$ and $\left(\sigma_{T'} + \C_{T'} \right)$.
\end{itemize}
\end{Theorem}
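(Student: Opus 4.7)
The plan is to establish three facts: (a) each parallelotope $\sigma_T + \C_T$ is contained in $\Vor(0)$; (b) the total-volume identity $\sum_T \Vol(\sigma_T + \C_T) = \Vol(\Vor(0))$; and (c) the crucial face-to-face property that, for distinct spanning trees $T \neq T'$, the intersection $(\sigma_T + \C_T) \cap (\sigma_{T'} + \C_{T'})$ is either empty or a common face of both. Statement (ii) of the theorem is (c), while (i) follows at once from (a), (b), and the disjointness of interiors that (c) contains.

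Facts (a) and (b) are formal. Fix an orientation $\Oc$ and, for each $e \in T$, let $\epsilon_T(e) \in \{\pm 1\}$ record whether the $\Oc$-orientation of $e$ agrees with its orientation in $\Tc_q$, so that $\sigma_T = \tfrac{1}{2}\sum_{e \in T}\epsilon_T(e)\pi(e)$. Any point of $\sigma_T + \C_T$ is then of the form $\tfrac{1}{2}\sum_{e \in \Oc}c_e \pi(e)$ with $|c_e| \le 1$ (take $c_e = \epsilon_T(e)$ for $e \in T$), so it lies in $\Vor(0)$ by Proposition~\ref{prop:VorZon}. For (b), Lemmas~\ref{lem:volvor} and~\ref{lem:CT}(b) give
\[
\sum_T \Vol(\sigma_T + \C_T) \;=\; \frac{1}{\sqrt{w(G)}} \sum_T w(T) \;=\; \sqrt{w(G)} \;=\; \Vol(\Vor(0)).
\]

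The heart of the argument, and the main obstacle, is (c). Given $z$ in the intersection, write
\[
z \;=\; \sigma_T + \sum_{f \in \Oc\setminus T}\alpha_f\,\pi(f) \;=\; \sigma_{T'} + \sum_{f \in \Oc\setminus T'}\beta_f\,\pi(f), \qquad |\alpha_f|,|\beta_f| \le \tfrac{1}{2}.
\]
Subtracting yields $\sum_{e\in\Oc}\gamma_e\pi(e) = 0$ for suitable $\gamma_e$, and a short case split according to whether $e$ lies in $T \cap T'$, $T \setminus T'$, $T' \setminus T$, or $\Oc \setminus (T \cup T')$ reveals the key inequalities $\gamma_e\,\epsilon_T(e) \ge 0$ for all $e \in T$ and $\gamma_e\,\epsilon_{T'}(e) \le 0$ for all $e \in T'$. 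Since $\Ker(\pi) = H_1(G,\RR)^\perp$ coincides with the image of $\D^{-1}\B^{\TP}$, there exists $\phi \colon V(G) \to \RR$ with $\gamma_e\,\ell(e) = \phi(e^+) - \phi(e^-)$. Walking along the unique path in $T$ from $q$ to an arbitrary vertex $v$, the sign condition on edges of $T$ translates into $\phi$ being non-decreasing, so $\phi(v) \ge \phi(q)$; the analogous walk in $T'$ yields $\phi(v) \le \phi(q)$. As both $T$ and $T'$ span $G$, this forces $\phi$ constant, so $\gamma = 0$.

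With $\gamma = 0$, the coefficients $\alpha_f$ for $f \in T' \setminus T$ and $\beta_f$ for $f \in T \setminus T'$ are pinned to the extremal values $\tfrac{1}{2}\epsilon_{T'}(f)$ and $\tfrac{1}{2}\epsilon_T(f)$ respectively, and one must have $\epsilon_T(e) = \epsilon_{T'}(e)$ for every $e \in T \cap T'$ (otherwise the intersection is empty). The intersection is then the face of $\sigma_T + \C_T$ cut out by fixing the coefficients indexed by $T' \setminus T$ at their extremes, and symmetrically a face of $\sigma_{T'} + \C_{T'}$. This completes (c), and together with (a) and (b) yields the theorem.
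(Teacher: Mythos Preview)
Your proof is correct, and parts (a) and (b) match the paper exactly. For (c) you take a genuinely different route. The paper argues by constructing, for each pair $T \neq T'$, a separating affine hyperplane: it forms an oriented subgraph $\Dc$ from $\Tc_q$ and $\Tc'_q$, checks that $\Dc$ is totally cyclic, invokes the Greene--Zaslavsky correspondence between cells of the cographic hyperplane arrangement and totally cyclic subgraphs to produce a vector $\vb \in H_1(G,\RR)$ with prescribed signs against the $\pi(e)$, and then builds an affine functional $\hf$ with $\hf(\sigma_T+\C_T)\subseteq\RR_{\ge 0}$ and $\hf(\sigma_{T'}+\C_{T'})\subseteq\RR_{\le 0}$. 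You instead exploit the decomposition $C_1(G,\RR)=H_1(G,\RR)\oplus H_1(G,\RR)^{\perp}$: the difference vector $\gamma$ lands in $\Ker(\pi)=H_1(G,\RR)^{\perp}$, hence is the gradient of a vertex potential $\phi$, and a discrete maximum-principle argument (monotonicity of $\phi$ along the unique paths from $q$ in each rooted tree) forces $\phi$ constant, so $\gamma=0$. Your approach is more self-contained---it avoids the external hyperplane-arrangement result---and it pins down the intersection explicitly as a specific common face (the one obtained by freezing the coordinates indexed by $T'\setminus T$, respectively $T\setminus T'$, at their extremal values), whereas the paper's argument is more geometric but leaves that final identification somewhat implicit after establishing separation and the volume count.
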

\begin{proof}
It follows from Proposition~\ref{prop:VorZon} that, for all spanning trees $T$ of $G$, we have 
\begin{equation} \label{eq:ingred1}
\sigma_T + \C_T \subseteq \Vor(0) \, .
\end{equation}
Moreover, by Lemma~\ref{lem:volvor} and Lemma~\ref{lem:CT}~(b), we know
\begin{equation} \label{eq:ingred2}
\sum_T{\Vol\left(\sigma_T + \C_T \right)} = \Vol\left(\Vor(0)\right)
\, ,
\end{equation}
the sum being over all spanning trees $T$ of $G$. 

Next we describe how these parallelotopes can intersect each other. Let $T \ne T'$ be two spanning trees. We choose an orientation $\Oc$ on $G$ that agrees with $\Tc_q$ for $e\in T$, disagrees with $\Tc'_q$ for $e \in T' \backslash T$, and is arbitrary outside $T \cup T'$. We may partition $\Oc$ into the following (possibly empty) subsets: 
\[
\begin{aligned}
\Sc_1 \coloneqq \Tc_q \, \backslash \left( \Tc'_q \cup \overline{\Tc'_q}\right) \,\, , \,\,
\Sc_2 &\coloneqq \overline{\Tc'_q} \, \backslash \left( \Tc_q \cup \overline{\Tc_q}\right)  \,\, , \,\,
\Sc_3 \coloneqq \Tc_q \, \cap \overline{\Tc'_q} \,\, , \,\,
\Sc_4 \coloneqq \Tc_q \, \cap \Tc'_q \,\, , \,\, \\
\Sc_5 &\coloneqq \Oc \backslash \left(\Sc_1 \cup \Sc_2 \cup \Sc_3 \cup \Sc_4 \right) \,\, .
\end{aligned}
\]

Consider the oriented subgraph $\Dc \coloneqq \Sc_1 \cup \Sc_2 \cup \Sc_3$. Note that the oriented subgraph $\Dc$ is obtained from the edge set $T\cup T'$ by removing those edges in $T\cap T'$ that have the same orientation in $\Tc_q$ and in $\Tc'_q$, and by orienting the remaining edges according to $\Oc$. It is easy to check that $\Dc$ is {\em totally cyclic} (i.e. every edge belongs to a directed circuit or, equivalently, it has no directed cocircuit). This implies that there exists a vector $\vb \in H_1(G, \RR)$ such that 
\[
\begin{aligned}
[\vb , \pi(e)] &> 0 \quad , \quad \text{if } e \in \Sc_1  \cup \Sc_2 \cup \Sc_3\, ,\\
[\vb , \pi(e)] &= 0 \quad , \quad \text{if } e \in \Sc_4 \cup \Sc_5 \, .
\end{aligned}
\]
To see this, consider the oriented (cographic) hyperplane arrangement with normal vectors $\{\pi(e) \colon e \in \Oc\}$ in the vector space $H_1(G, \RR)$. It is well known (see, e.g., \cite[Lemma~8.2]{grzas}) that there is a one-to-one correspondence between cells of this hyperplane arrangement and totally cyclic subgraphs of $G$. Our desired vector $\vb$ is any vector in the cell corresponding to $\Dc$.

Consider the function $\hf \colon H_1(G, \RR) \rightarrow \RR$ defined by
\[
\hf(z) = [\vb , z  + \sum_{e \in \Sc_2}{\frac{1}{2}\pi(e)} - \sum_{e \in \Sc_1 \cup \Sc_4}{\frac{1}{2}\pi(e)}] \, .
\]
We compute 
\[
\begin{aligned}
\hf\left( \sigma_T + \C_T \right) &= \hf\left( \frac{1}{2} \, \Z \left(\{ \pi(e) \colon e \in \Sc_2 \cup \Sc_5\}\right) + \frac{1}{2}\sum_{e \in \Sc_1 \cup \Sc_3 \cup \Sc_4} \pi(e)  \right) \\
&= \{ 0 \leq x \leq \sum_{e \in \Sc_2} [\vb,  \pi(e) ] \} + \frac{1}{2}\sum_{e \in \Sc_3} [\vb,  \pi(e) ] \subseteq \RR_{\geq 0}
\, .
\end{aligned}
\]
Similarly, we compute 
\[
\begin{aligned}
\hf\left( \sigma_{T'} + \C_{T'} \right) &= \hf\left( \frac{1}{2} \, \Z \left(\{ \pi(e) \colon e \in \Sc_1 \cup \Sc_5\}\right) -  \frac{1}{2}\sum_{e \in \Sc_2 \cup \Sc_3}  \pi(e) + \frac{1}{2}\sum_{e \in  \Sc_4}  \pi(e) \right) \\
&= \{ - \sum_{e \in \Sc_1} [\vb,  \pi(e) ] \leq x \leq 0\}  - \frac{1}{2}\sum_{e \in \Sc_3} [\vb,  \pi(e) ] \subseteq \RR_{\leq 0}
\, .
\end{aligned}
\]
It follows that the relative interiors of $\sigma_T + \C_T$ and $\sigma_{T'} + \C_{T'}$ are disjoint. Moreover, if $\Fc=
\left(\sigma_T + \C_T \right) \cap \left(\sigma_{T'} + \C_{T'} \right) \ne \emptyset$, then the intersection is contained in $\hf^{-1}(0)$, which is a hyperplane in $H_1(G, \RR)$.
The result follows from this observation, together with \eqref{eq:ingred1} and \eqref{eq:ingred2}. 
\end{proof}

\section{Tropical moments of tropical Jacobians}
\label{sec:tropmoment}

In this section, we prove our promised potential theoretic expression for tropical moments of tropical Jacobians.  

\begin{Theorem} \label{thm:momentformula}
Let $\Gamma$ be a metric graph. Fix a model $G$ of $\Gamma$ and fix a point $q \in V(G)$. Then
\[
I(\Jac(\Gamma)) = \frac{1}{12}\sum_{e} {\F(e)^2 \ell(e) +  \frac{1}{4}\sum_{e = \{u,v\}} \left(r(u,v) - \frac{j_{u}( v, q)^2 + j_{v}( u, q)^2}{\ell(e)}\right)} \, ,
\]
where the sums are over all edges $e \in E(G)$.
\end{Theorem}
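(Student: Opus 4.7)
The plan is to leverage the polytopal decomposition of $\Vor(0)$ developed in Theorem~\ref{thm:vordecomp} together with the weighted-average identity of Theorem~\ref{thm:avgcenters}. First I would decompose
\[
I(\Jac(\Gamma)) = \int_{\Vor(0)} [z,z]\, d\mu_L(z) = \sum_T \int_{\sigma_T + \C_T} [z,z]\, d\mu_L(z),
\]
the sum running over spanning trees $T$ of $G$. In each summand perform the change of variables $z = \sigma_T + w$ with $w \in \C_T$ and expand
\[
[z,z] = [\sigma_T,\sigma_T] + 2[\sigma_T, w] + [w,w].
\]
The cross term integrates to zero because $\C_T$ is centrally symmetric about the origin, so the problem cleanly splits into a ``center contribution'' and a ``parallelotope contribution''.

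For the center contribution, note that $\mu_L(\Vor(0)) = 1$ combined with Lemma~\ref{lem:volvor} and Lemma~\ref{lem:CT}(b) gives $\mu_L(\C_T) = w(T)/w(G)$. Hence the center contribution equals
\[
\sum_T \frac{w(T)}{w(G)}\, [\sigma_T,\sigma_T],
\]
and Theorem~\ref{thm:avgcenters} evaluates this directly as
$\tfrac{1}{4}\sum_{e=\{u,v\}}\bigl(r(u,v) - (j_u(v,q)^2+j_v(u,q)^2)/\ell(e)\bigr)$,
yielding the second sum in the statement. This is where all the heavy lifting done in Sections~\ref{sec:random} and~\ref{sec:crossavg} is cashed in.

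For the parallelotope contribution, fix an orientation $\Oc$ and parametrize $w = \sum_{e \in \Oc \setminus T} \beta_e\, \pi(e)$ with $\beta_e \in [-\tfrac12,\tfrac12]$, using that $\{\pi(e) : e \in \Oc\setminus T\}$ is a basis for $H_1(G,\RR)$ (Lemma~\ref{lem:CT}(a)). The Jacobian of this change of variables, together with the normalization factor $1/\sqrt{w(G)}$ between the metric volume and $\mu_L$, produces $\mu_L$-mass $w(T)/w(G)$ on each coordinate box. Since $\int_{-1/2}^{1/2}\beta\, d\beta = 0$ and $\int_{-1/2}^{1/2}\beta^2\, d\beta = 1/12$, the off-diagonal terms vanish and only the diagonal terms $[\pi(e),\pi(e)]$ survive. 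Because $\pi$ is the orthogonal projection onto $H_1(G,\RR)$, one has $[\pi(e),\pi(e)] = [\pi(e),e] = \F(e)\,\ell(e)$ by~\eqref{eq:Fef}. Putting this together,
\[
\int_{\C_T} [w,w]\, d\mu_L(w) = \frac{w(T)}{w(G)} \cdot \frac{1}{12}\sum_{e \in \Oc \setminus T} \F(e)\,\ell(e).
\]
Summing over $T$ and switching the order of summation using Remark~\ref{rmk:foster}(i), namely $\F(e) = \sum_{T \not\ni e} w(T)/w(G)$, gives
\[
\sum_T \int_{\C_T} [w,w]\, d\mu_L(w) = \frac{1}{12}\sum_{e \in E(G)} \F(e)^2\, \ell(e),
\]
which is the first sum in the statement. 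Adding the two contributions completes the proof. The genuinely hard step is the center contribution, but that difficulty has already been absorbed into Theorem~\ref{thm:avgcenters}; the remaining work is essentially a moment-of-inertia computation for a centered parallelotope combined with the Foster-coefficient identity.
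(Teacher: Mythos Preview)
Your proposal is correct and follows essentially the same approach as the paper's own proof: decompose the integral via Theorem~\ref{thm:vordecomp}, split each piece into a center contribution handled by Theorem~\ref{thm:avgcenters} and a parallelotope contribution evaluated by the standard moment computation on $[-\tfrac12,\tfrac12]^g$, then collapse the latter using Remark~\ref{rmk:foster}(i). The only cosmetic difference is that you justify $[\pi(e),\pi(e)]=\F(e)\ell(e)$ via $[\pi(e),\pi(e)]=[\pi(e),e]$ and~\eqref{eq:Fef}, whereas the paper cites this directly from \S\ref{sec:proj}.
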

\begin{proof}
Recall 
\[ 
I(\Jac(\Gamma)) = \int_{\Vor(0)} [ z,z ] \, \d \mu_L(z) \, ,  \]
where $\mu_L$ is the Lebesgue measure on $H_1(G, \RR)$, normalized to have $\mu_L(\Vor(0)) = 1$. By Theorem~\ref{thm:vordecomp}, we obtain
\begin{equation} \label{eq:Isumint}
I(\Jac(\Gamma)) = \sum_T \int_{\sigma_T + \C_T} [ z,z ] \, \d \mu_L(z) \, ,
\end{equation}
the sums being over all spanning trees $T$ of $G$. We also have
\begin{equation} \label{eq:int_CT}
\begin{aligned}
\int_{\sigma_T + \C_T} [ z,z ] \, \d \mu_L(z) &= \int_{\C_T} [ y+\sigma_T, y+\sigma_T ] \, \d \mu_L(y)\\
&=  \int_{\C_T} \left( [ y, y]+2[\sigma_T, y]+[\sigma_T, \sigma_T] \right) \, \d \mu_L(y)\\
&=  \int_{\C_T} [ y, y] \, \d \mu_L(y) + \int_{\C_T} [\sigma_T, \sigma_T] \, \d \mu_L(y)\, .
\end{aligned}
\end{equation}
The last equality is because ${\C_T}$ is centrally symmetric and $[\sigma_T, y]$ is odd. 

By Lemma~\ref{lem:volvor} and Lemma~\ref{lem:CT}~(b) we have 
\begin{equation} \label{eq:int_center}
\int_{\C_T} [\sigma_T, \sigma_T] \, \d \mu_L(y) = \frac{\Vol(\C_T)}{\Vol\left(\Vor(0)\right)} [\sigma_T, \sigma_T] = \frac{w(T)}{w(G)}[\sigma_T, \sigma_T]\, . 
\end{equation}
On the other hand, 
\begin{equation} \label{eq:int_cell}
\begin{aligned}
\int_{\C_T} [ y, y] \, \d \mu_L(y) &= \frac{\Vol(\C_T)}{\Vol\left(\Vor(0)\right)} \int_{[-\frac{1}{2}, \frac{1}{2}]^g} [\sum_{e \not\in T} \alpha_e \pi(e) , \sum_{e \not\in T} \alpha_e \pi(e)] \, \d \boldsymbol{\alpha} \\
&= \frac{w(T)}{w(G)}  \int_{[-\frac{1}{2}, \frac{1}{2}]^g} \left( \sum_{\substack{e,f \not\in T\\ e \ne f}} \alpha_e  \alpha_{f} [\pi(e), \pi(f)] +  \sum_{e\not \in T} \alpha_e^2   [\pi(e), \pi(e)] \right) \! \d\boldsymbol{\alpha} \\
&= \frac{w(T)}{w(G)} \sum_{e \not \in T} [\pi(e), \pi(e)] \int_{[-\frac{1}{2}, \frac{1}{2}]^g}  \alpha_e^2 \d\boldsymbol{\alpha} \\
\end{aligned}
\end{equation}
Here $\d \boldsymbol{\alpha} = \prod_{e \not\in T}\d\alpha_e$ denotes the usual Lebesgue measure on $\RR^g$. The first equality is by the {\em change of variables} theorem, and the last equality holds because $\alpha_e  \alpha_{f}$ is an odd function on the symmetric domain $[-\frac{1}{2}, \frac{1}{2}]^g$. Clearly (see \S\ref{sec:proj}):
\begin{equation} \label{eq:inteasy}
[\pi(e), \pi(e)] = \F(e) \ell(e) \quad , \quad \int_{[-\frac{1}{2}, \frac{1}{2}]^g}  \alpha_e^2 \d\boldsymbol{\alpha} =
\int_{-\frac{1}{2}}^{\frac{1}{2}}  \alpha_e^2 \d\alpha_e = \frac{1}{12} \,.
\end{equation}
Putting \eqref{eq:Isumint}, \eqref{eq:int_CT}, \eqref{eq:int_center}, \eqref{eq:int_cell}, and \eqref{eq:inteasy} together, we obtain:
\begin{equation} \label{eq:Iweightedsum}
I(\Jac(\Gamma)) = \frac{1}{12} \sum_T   \frac{w(T)}{w(G)} \sum_{e \not \in T} \F(e) \ell(e) + \sum_T \frac{w(T)}{w(G)}[\sigma_T, \sigma_T] \,.
\end{equation}
The first sum is simplified immediately, after changing the order of summations (see Remark~\ref{rmk:foster}~(i)):
\begin{equation} \label{eq:1st_sum}
\sum_T   \frac{w(T)}{w(G)} \sum_{e \not \in T} \F(e) \ell(e) = \sum_{e \in E(G)} \F(e) \ell(e)  \sum_{T \not \ni e} \frac{w(T)}{w(G)} = \sum_{e \in E(G)} \F(e)^2 \ell(e) \, .
\end{equation}
The result now follows from \eqref{eq:Iweightedsum}, \eqref{eq:1st_sum}, and Theorem~\ref{thm:avgcenters}. 
\end{proof}

\begin{Remark}\phantomsection \label{rmk:I}
\begin{itemize}
\item[]
\item[(i)] If $e = \{u,v\}$ is a bridge then one observes that $r(u, v)= \ell(e)$ and 
$\{j_{u}( v, q),  j_{v}( u, q)\} = \{0,\ell(e)\}$ and $\F(e)= 0$. So $e$ contributes $0$ to $I(\Jac(\Gamma))$. This is expected, as bridges do not contribute to the first homology.
\item[(ii)]  By \eqref{eq:jid2}, \eqref{eq:jid3}, and \eqref{eq:jid4} it is clear that everything in the formula for $I(\Jac(\Gamma))$ in Theorem~\ref{thm:momentformula} can be expressed in terms of the entries of $\L_q$ and edge lengths. It follows using Remark \ref{rmk:computej} that computing $I(\Jac(\Gamma))$ takes time at most $O(n^\omega)$, where $n$ is the number of vertices in a model of $\Gamma$, and $\omega$ is the exponent for matrix multiplication.
\end{itemize}
\end{Remark}

\begin{Example}\label{ex:I}

Consider the metric graph $\Gamma$ in Figure~\ref{fig:banana}. 
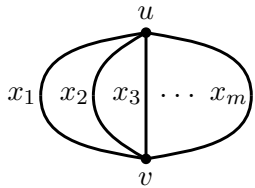
\begin{figure}[h!]
$$
\begin{xy}
(0,0)*+{
	\scalebox{.7}{$
	\begin{tikzpicture}
	\draw[black, ultra thick] (0,1.2) to [out=-170,in=90] (-2,0);
	\draw[black, ultra thick] (-2,0) to [out=-90,in=170] (0,-1.2);
	\draw[black, ultra thick] (0,1.2) to [out=-150,in=90] (-1,0);
	\draw[black, ultra thick] (-1,0) to [out=-90,in=150] (0,-1.2);
	\draw[black, ultra thick] (0,1.2) to [out=-10,in=90] (2,0);
	\draw[black, ultra thick] (2,0) to [out=-90,in=10] (0,-1.2);
	\draw[black, ultra thick] (0,1.2) to (0,-1.2);	
	\fill[black] (0,1.2) circle (.1);
	\fill[black] (0,-1.2) circle (.1);
	\end{tikzpicture}
	$}
};
(-16.5,0)*+{\mbox{{\smaller $x_1$}}};
(-9.5,0)*+{\mbox{{\smaller $x_2$}}};
(-2.5,0)*+{\mbox{{\smaller $x_3$}}};
(3.5,0)*+{\mbox{{ $\ldots$}}};
(11,0)*+{\mbox{{\smaller $x_m$}}};
(0,11)*+{\mbox{{\smaller $u$}}};
(0,-11)*+{\mbox{{\smaller $v$}}};
\end{xy}
$$
\caption{A banana graph $\Gamma$, consisting of $2$ branch points and $m$ edges $\{e_1, \ldots , e_m\}$ with $\ell(e_i) = x_i$.}
\label{fig:banana}
\end{figure} 

Fix the minimal model $G$ whose vertex set consists of the two branch points $u$ and $v$, and let $q=v$. Let $\Rrm$ denote the effective resistance between $u$ and $v$:
\[
{\Rrm}^{-1} = \sum_{i=1}^m {x_i}^{-1} \,.
\]
Then, for each edge $e_i = \{u,v\}$, we have
\[
\F(e_i) = 1 - {\Rrm}/{x_i} \quad , \quad r(u,v) = \Rrm \quad , \quad j_u(v,q) = \Rrm \quad , \quad  j_v(u,q) = 0\,.
\]
By Theorem~\ref{thm:momentformula} one computes:
\[
I(\Jac(\Gamma)) = \frac{1}{12} \left( \sum_{i=1}^m x_i +\frac{m-2}{\sum_{i=1}^m {x_i}^{-1}}\right) 
 \,.
\]
In the special case $x_1 = \ldots = x_m = 1$, the lattice $\left(H_1(\Gamma, \ZZ), [\cdot , \cdot] \right)$ corresponds to the root lattice $A_{m-1}$. So one recovers the formula in \cite[page 460]{cs}:
\[ I(A_{m-1}) = \frac{m^2+m-2}{12m}\, . \]

We record two other special cases for our application in \S\ref{sec:arakelov}:
\begin{itemize}
\item If $m=2$ then the metric graph $\Gamma$ is just a circle, and 
\[I(\Jac(\Gamma)) = \frac{1}{12} \ell \, ,\]
 where $\ell$ is the total length of the circle.
\item If $m=3$ then we have 
\[
I(\Jac(\Gamma)) = \frac{1}{12}\left( x_1+x_2+x_3 + \frac{x_1x_2x_3}{x_1x_2+x_2x_3+x_3x_1}\right) \, .
\]
\end{itemize}
\end{Example}

\section{Heights of Jacobians in dimension two} \label{sec:arakelov} 

In this section we specialize (\ref{two_heights}) to the case where the principally polarized abelian variety $(A,\lambda)$ is a Jacobian variety of dimension two. We use (\ref{two_heights}) together with Example \ref{ex:I} to  give a conceptual explanation of a result due to Autissier \cite{aut}. For more background and for terminology used in this section we refer to \cite{FR1}.

Let $k$ be a number field. Let $X$ be a smooth projective geometrically connected curve of genus two with semistable reduction over $k$. Let $\Jac(X)$ be the Jacobian variety of $X$. As in \S\ref{sec:connection} we denote by $M(k)_0$ and $M(k)_\infty$ the set of non-archimedean places and the set of complex embeddings of $k$. For each $v \in M(k)_0$ we have a metric graph $\Gamma_v$ canonically associated to $X$ at $v$ by taking the dual graph of the geometric special fiber of the stable model of $X$ at $v$, see for example \cite{Zhang93}. The tropical Jacobian $\Jac(\Gamma_v)$  is canonically isometric with the canonical skeleton of the Berkovich analytification of $\Jac(X)$ at $v$. 

Equation (\ref{two_heights}) specializes into the identity
\begin{equation} \label{eqn:specialized} \begin{split} \h_F(\Jac(X)) =  4 \, \h'_L  (\Theta) - 2 \, \kappa_0  +\frac{1}{[k:\QQ]} & \sum_{v \in M(k)_0} I(\Jac(\Gamma_v)) \log Nv \\ &  + \frac{2}{[k:\QQ]} \sum_{v \in M(k)_\infty}  I(\Jac(X_v)) \, . \end{split} 
\end{equation}
In \cite[Th\'eor\`eme~5.1]{aut} Autissier proves an identity
\begin{equation} \label{aut_genus_two} \begin{split} \h_F(\Jac(X)) =  4 \, \h'_L  (\Theta) - 2 \, \kappa_0  +\frac{1}{[k:\QQ]} & \sum_{v \in M(k)_0} \alpha_v \log Nv \\ &  + \frac{2}{[k:\QQ]} \sum_{v \in M(k)_\infty}  I(\Jac(X_v)) \, , \end{split} 
\end{equation}
where for each $v \in M(k)_0$ the local invariant $\alpha_v$ is given explicitly in terms of the metric graph $\Gamma_v$ by means of a table. See the Remarque following the proof of  \cite[Th\'eor\`eme~5.1]{aut} where, for each of the seven possible topological types of stable dual graphs in genus two, the invariant $\alpha_v$ is given in terms of the edge lengths of a minimal model of $\Gamma_v$.  

A simple explicit calculation using Example \ref{ex:I} shows that for all seven topological types as listed in Autissier's table, the equality $\alpha_v = I(\Jac(\Gamma_v))$ is verified. For example, consider case VII in the last row of Autissier's table, corresponding to a banana graph with three edges. The table in this case gives 
\[ \alpha_v = \frac{1}{12}\left( x_1+x_2+x_3 + \frac{x_1x_2x_3}{x_1x_2+x_2x_3+x_3x_1}\right) \, , \]
where $x_1, x_2, x_3$ are the three edge lengths. By Example~\ref{ex:I}, with $m=3$, we see immediately that $\alpha_v = I(\Jac(\Gamma_v))$ for case VII. The cases I-VI are very similar, and in fact simpler, as the irreducible components of the corresponding graphs are either bridges, which by Remark~\ref{rmk:I}~(ii) contribute zero to $I(\Jac( \Gamma_v))$, or circles, which by Example~\ref{ex:I} with $m=2$ contribute ${1}/{12}$ of their total length to  $I(\Jac(\Gamma_v))$. We thus have a complete conceptual explanation of all entries in Autissier's table. 

\section{Tau invariant of metric graphs}
\label{sec:tau}

Let $\Gamma$ be a metric graph. The notion of Arakelov--Green's function $g_\mu(x, y)$ associated to a measure $\mu$ on $\Gamma$ is introduced in \cite{cr,Zhang93}.  It can be shown (\cite[Theorem 2.11]{cr}) that there exists a unique measure $\mu_{\can}$ on $\Gamma$ having total mass $1$, such that $g_{\mu_{\can}}(x, x)$ is a constant. This constant is by definition $\tau(\Gamma)$. Alternatively $\tau(\Gamma)$ can be interpreted as a certain `capacity', with equilibrium measure $\mu_{\can}$ and with potential kernel  ($1/2$ times) the effective resistance function $r(x,y)$ (\cite[Corollary 14.2]{br}). See also \cite{bffourier, br, ciop} for more background, examples, and formulas. 

We will work with the following  definition (see, e.g., \cite[Lemma 14.4]{br}) in terms of the effective resistance function.
\begin{Definition}\label{def:tau}
Fix a point $q \in \Gamma$. Let $f(x) = \frac{1}{2} r(x, q)$. We put
\[
\tau(\Gamma) \coloneqq \int_{\Gamma}{\left(f'(x)\right)^2 \d x} \, ,
\]
where $\d x$ denotes the (piecewise) Lebesgue measure on $\Gamma$. 
\end{Definition}

It is elementary and well-known that the real number $\tau(\Gamma)$ is independent of the choice of $q \in \Gamma$.

We have the following explicit (and efficiently computable) formula for $\tau(\Gamma)$. See also \cite[Proposition 2.9]{ciop} for an equivalent form of this formula. Our proof is somewhat different and avoids `circuit reduction theory'. 

\begin{Theorem} \label{thm:tauformula}
Let $\Gamma$ be a metric graph. Fix a model $G$ of $\Gamma$, and fix a point $q \in V(G)$. Then
\[
\tau(\Gamma) = \frac{1}{12} \sum_{e}\F(e)^2 \ell(e) +\frac{1}{4} \sum_{e= \{u,v\}}  \frac{\left(r(u, q) - r(v, q)\right)^2}{\ell(e)}  \, ,
\]
where the sums are over all edges $e \in E(G)$.
\end{Theorem}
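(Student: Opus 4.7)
The plan is to compute $\tau(\Gamma)$ edgewise, reducing everything to a one-dimensional integral on each edge segment by means of the explicit formula for $r(p,q)$ given in Example~\ref{ex:eff}. This bypasses any need for Arakelov--Green's functions or circuit reduction and uses only ingredients already established in the paper.

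First I would fix a model $G$ and a base vertex $q\in V(G)$, and note that by Definition~\ref{def:tau} we have $\tau(\Gamma)=\int_\Gamma (f'(x))^2\,\d x$ with $f(x)=\tfrac12 r(x,q)$. Since $\Gamma$ is the union of its edge segments along $V(G)$, this integral splits as a sum over edges $e\in E(G)$, and it suffices to compute the contribution of each edge $e=\{u,v\}$ separately. Parametrize $e$ by arc length $x\in[0,\ell(e)]$ measured from $u$. Then Example~\ref{ex:eff} gives the \emph{quadratic} expression
\[
r(p,q)=\frac{\ell(e)-x}{\ell(e)}\,r(u,q)+\frac{x}{\ell(e)}\,r(v,q)+\F(e)\,\frac{(\ell(e)-x)x}{\ell(e)}
\]
for the point $p\in e$ at distance $x$ from $u$. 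In particular $f$ is smooth on the interior of $e$, so $f'$ makes sense pointwise.

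Next I would differentiate in $x$ to obtain
\[
f'(p)=\frac{1}{2}\!\left(\frac{r(v,q)-r(u,q)}{\ell(e)}+\F(e)\,\frac{\ell(e)-2x}{\ell(e)}\right),
\]
which is affine in $x$. Writing $a=(r(v,q)-r(u,q))/\ell(e)$ and $b=\F(e)/\ell(e)$, we have $2f'(p)=a+b(\ell(e)-2x)$, so
\[
(f'(p))^2=\tfrac{1}{4}\bigl(a^2+2ab(\ell(e)-2x)+b^2(\ell(e)-2x)^2\bigr).
\]
A straightforward antisymmetry argument on the interval $[0,\ell(e)]$ kills the cross term $\int_0^{\ell(e)}(\ell(e)-2x)\,\d x=0$, while $\int_0^{\ell(e)}(\ell(e)-2x)^2\,\d x=\ell(e)^3/3$ is elementary. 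Hence the edge contribution is
\[
\int_0^{\ell(e)}(f'(p))^2\,\d x=\frac{(r(u,q)-r(v,q))^2}{4\,\ell(e)}+\frac{\F(e)^2\ell(e)}{12}.
\]

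Finally, summing over all $e\in E(G)$ yields the claimed identity, and I would remark that the formula is manifestly independent of the choice of model (one may subdivide any edge, and the two contributions on the pieces of a subdivided edge recombine to the single contribution above, using that $\F$ and $r(\cdot,q)$ interpolate consistently by Example~\ref{ex:eff}). Since everything is a direct computation based on a formula already in the excerpt, there is no real obstacle; the only mild subtlety is verifying that the cross term vanishes and evaluating the two elementary integrals, both of which are routine. The formula makes clear that $\tau(\Gamma)$ is efficiently computable from $\L_q$ and the edge lengths, in parallel with Remark~\ref{rmk:I}~(ii).
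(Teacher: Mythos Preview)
Your proof is correct and follows essentially the same approach as the paper: split $\tau(\Gamma)$ into edge contributions, use Example~\ref{ex:eff} to express $r(p,q)$ quadratically in the arc-length parameter, differentiate, and integrate the resulting affine-squared function over $[0,\ell(e)]$. The only cosmetic difference is that you organize the integral via the antisymmetry of $\ell(e)-2x$ to kill the cross term, whereas the paper simply records the outcome of the direct computation; your added remark on model independence is a pleasant bonus not present in the original.
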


\begin{proof}
Let $f(x)$ be as in Definition~\ref{def:tau}. Let $G$ be a model of $\Gamma$. Then 
\[
\tau(\Gamma) = \sum_{e \in E(G)} \int_e {\left(f'(x)\right)^2 \d x} \, .
\]
We identify each edge segment $e$ in $E(G)$ with an interval $[0,\ell(e)]$ and represent a point $x \in e$ by $x \in [0,\ell(e)]$. By Example~\ref{ex:eff}, we have
\[
2f'(x) =  \F(e) -\frac{r(u,q) - r(v,q)}{\ell(e)} - 2 \frac{\F(e)}{\ell(e)} x \, .
\]
With a direct computation, we obtain
\[
\int_e {\left(f'(x)\right)^2 \d x} = \int_0^{\ell(e)} {\left(f'(x)\right)^2 \d x} = \frac{1}{12} \F(e)^2 \ell(e) + \frac{1}{4} \frac{\left(r(u,q) - r(v,q) \right)^2}{\ell(e)}
\]
and the result follows. 
\end{proof}

The reader will notice the similarity between the right hand side in Theorem~\ref{thm:tauformula} and the right hand side in Theorem~\ref{thm:momentformula}. In fact we can now prove a simple linear relation between $\tau(\Gamma)$, the tropical moment $I(\Jac(\Gamma))$ and the {\em total length} of $\Gamma$. This will be the subject of the final section. 

\section{The linear relation}
\label{sec:linearrel}
\begin{Definition} \label{def:total_length}
Let $\Gamma$ be a metric graph, and fix a model $G$ of $\Gamma$. The {\em total length} of $\Gamma$ is defined by 
\[\ell(\Gamma) \coloneqq \sum_{e \in E(G)}{\ell(e)}\, .\] 
It is easily seen that $\ell(\Gamma)$ is independent of the choice of the model $G$. 
\end{Definition}

\begin{Theorem} \label{thm:linear}
Let $\Gamma$ be a metric graph. The identity
\[ \frac{1}{2}\tau(\Gamma)+I(\Jac(\Gamma)) = \frac{1}{8} \ell(\Gamma) \]
holds in $\RR$.
\end{Theorem}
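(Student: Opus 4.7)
The plan is to reduce the identity to a pure algebraic identity on each edge by invoking the two closed-form formulas already available, namely Theorem~\ref{thm:momentformula} for $I(\Jac(\Gamma))$ and Theorem~\ref{thm:tauformula} for $\tau(\Gamma)$. Both formulas are computed for a common model $G$ and a common base vertex $q \in V(G)$, and both have the same $\frac{1}{12}\sum_e \F(e)^2 \ell(e)$ head term. Adding $I(\Jac(\Gamma))$ to $\tfrac{1}{2}\tau(\Gamma)$ therefore consolidates the Foster contributions into $\tfrac{1}{8}\sum_e \F(e)^2\ell(e)$, and the task becomes showing that the remaining per-edge contributions telescope to $\tfrac{1}{8}\ell(\Gamma) - \tfrac{1}{8}\sum_e \F(e)^2 \ell(e)$.

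The key step is then to evaluate, for a single edge $e = \{u,v\}$, the quantity
\[
\frac{1}{8}\frac{(r(u,q)-r(v,q))^2}{\ell(e)} + \frac{1}{4}r(u,v) - \frac{1}{4}\frac{j_u(v,q)^2 + j_v(u,q)^2}{\ell(e)}.
\]
Using the $j$-function identity \eqref{eq:jid1}, the difference $r(u,q)-r(v,q)$ equals $j_u(v,q)-j_v(u,q)$, so expanding the square produces $j_u(v,q)^2 + j_v(u,q)^2 - 2\,j_u(v,q)\,j_v(u,q)$. After subtracting the third term, the edge contribution collapses to $\tfrac{1}{4}r(u,v) - \tfrac{1}{8\ell(e)}\bigl(j_u(v,q) + j_v(u,q)\bigr)^2$. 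Identity \eqref{eq:jid3} then rewrites $j_u(v,q)+j_v(u,q)$ as $r(u,v)$, leaving the clean expression $\tfrac{1}{4}r(u,v) - \tfrac{r(u,v)^2}{8\ell(e)}$.

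The final step is to introduce the Foster coefficient via $r(u,v) = (1-\F(e))\ell(e)$ (Definition~\ref{def:foster}). Substituting yields $\tfrac{\ell(e)}{8}\bigl(2(1-\F(e)) - (1-\F(e))^2\bigr) = \tfrac{\ell(e)}{8}\bigl(1-\F(e)^2\bigr)$. Summing over $e \in E(G)$ and recombining with the consolidated head term $\tfrac{1}{8}\sum_e \F(e)^2\ell(e)$ gives $\tfrac{1}{8}\sum_e \ell(e) = \tfrac{1}{8}\ell(\Gamma)$, as desired.

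I do not expect any serious obstacle here: all the analytic difficulty was absorbed by the earlier Theorems~\ref{thm:momentformula} and~\ref{thm:tauformula}, and the remaining content is a short algebraic manipulation whose only nontrivial ingredients are the two elementary $j$-function identities \eqref{eq:jid1} and \eqref{eq:jid3} together with the definition of $\F(e)$. The only point requiring minor care is to verify independence from the auxiliary base point $q$, which is guaranteed since the left-hand side is manifestly independent of $q$ and the per-edge simplification above eliminates all $q$-dependence on the right-hand side.
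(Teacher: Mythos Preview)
Your proposal is correct and follows essentially the same approach as the paper: you combine Theorem~\ref{thm:momentformula} with Theorem~\ref{thm:tauformula}, rewrite $r(u,q)-r(v,q)$ via \eqref{eq:jid1} and $j_u(v,q)+j_v(u,q)$ via \eqref{eq:jid3}, and finish with the definition of $\F(e)$ to collapse each edge contribution to $\tfrac{1}{8}\ell(e)$. The paper presents exactly these ingredients in a single chain of equalities rather than separating the Foster head term first, but the underlying manipulation is identical.
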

\begin{proof} Fix a model $G$ of $\Gamma$.
By Theorem~\ref{thm:tauformula} and \eqref{eq:jid1} we have:
\begin{equation} \label{eq:tau2}
\tau(\Gamma) = \frac{1}{12} \sum_{e}\F(e)^2 \ell(e) +\frac{1}{4} \sum_{e= \{u,v\}} \frac{\left(j_{u}(v , q) - j_{v}(u , q)\right)^2}{\ell(e)}  \, ,
\end{equation}
where the sums are over all edges $e \in E(G)$. The result follows from \eqref{eq:tau2}, \eqref{eq:jid3}, Theorem~\ref{thm:momentformula}, and the following direct computation:
\[ 
\begin{aligned}
&\frac{1}{2}\tau(\Gamma)+I(\Jac(\Gamma)) = \\
&=\! \frac{1}{8} \! \sum_{e = \{u,v\}} 
\left( \F(e)^2 \ell(e) + \frac{\left(j_{u}(v , q) - j_{v}(u , q)\right)^2}{\ell(e)} + 2r(u,v) - 2\frac{j_{u}( v, q)^2 + j_{v}( u, q)^2}{\ell(e)} \!\right)\\
&=\! \frac{1}{8} \! \sum_{e = \{u,v\}}
\left( \F(e)^2 \ell(e)  + 2r(u,v) - \frac{\left(j_{u}(v , q) + j_{v}(u , q)\right)^2}{\ell(e)}\right)\\
&=\! \frac{1}{8} \! \sum_{e = \{u,v\}}
\left( \left(1 -\frac{r(u,v)}{\ell(e)}\right)^2 \ell(e)  + 2r(u,v) - \frac{r(u,v)^2}{\ell(e)}\right)\\
&=\! \frac{1}{8}  \! \sum_{e = \{u,v\}} \ell(e) = \frac{1}{8}\, \ell(\Gamma)
 \, .
\end{aligned}
\]
\end{proof}

\begin{bibdiv}
\begin{biblist}

\bib{abks}{article}{
      author={An, Yang},
      author={Baker, Matthew},
      author={Kuperberg, Greg},
      author={Shokrieh, Farbod},
       title={Canonical representatives for divisor classes on tropical curves
  and the matrix-tree theorem},
        date={2014},
        ISSN={2050-5094},
     journal={Forum Math. Sigma},
      volume={2},
       pages={e24, 25},
         url={http://dx.doi.org/10.1017/fms.2014.25},
      review={\MR{3264262}},
}

\bib{aut}{article}{
      author={Autissier, Pascal},
       title={Hauteur de {F}altings et hauteur de {N}\'eron-{T}ate du diviseur
  th\^eta},
        date={2006},
        ISSN={0010-437X},
     journal={Compos. Math.},
      volume={142},
      number={6},
       pages={1451\ndash 1458},
         url={http://dx.doi.org/10.1112/S0010437X0600234X},
      review={\MR{2278754}},
}

\bib{bffourier}{incollection}{
      author={Baker, Matthew},
      author={Faber, Xander},
       title={Metrized graphs, {L}aplacian operators, and electrical networks},
        date={2006},
   booktitle={Quantum graphs and their applications},
      series={Contemp. Math.},
      volume={415},
   publisher={Amer. Math. Soc., Providence, RI},
       pages={15\ndash 33},
         url={http://dx.doi.org/10.1090/conm/415/07857},
      review={\MR{2277605}},
}

\bib{br}{article}{
      author={Baker, Matt},
      author={Rumely, Robert},
       title={Harmonic analysis on metrized graphs},
        date={2007},
        ISSN={0008-414X},
     journal={Canad. J. Math.},
      volume={59},
      number={2},
       pages={225\ndash 275},
         url={http://dx.doi.org/10.4153/CJM-2007-010-2},
      review={\MR{2310616}},
}

\bib{brab}{article}{
      author={Baker, Matthew},
      author={Rabinoff, Joseph},
       title={The skeleton of the {J}acobian, the {J}acobian of the skeleton,
  and lifting meromorphic functions from tropical to algebraic curves},
        date={2015},
        ISSN={1073-7928},
     journal={Int. Math. Res. Not. IMRN},
      number={16},
       pages={7436\ndash 7472},
         url={http://dx.doi.org/10.1093/imrn/rnu168},
      review={\MR{3428970}},
}

\bib{fm}{article}{
      author={Baker, Matthew},
      author={Shokrieh, Farbod},
       title={Chip-firing games, potential theory on graphs, and spanning
  trees},
        date={2013},
        ISSN={0097-3165},
     journal={J. Combin. Theory Ser. A},
      volume={120},
      number={1},
       pages={164\ndash 182},
         url={http://dx.doi.org/10.1016/j.jcta.2012.07.011},
      review={\MR{2971705}},
}

\bib{ciop}{article}{
      author={Cinkir, Zubeyir},
       title={The tau constant of a metrized graph and its behavior under graph
  operations},
        date={2011},
        ISSN={1077-8926},
     journal={Electron. J. Combin.},
      volume={18},
      number={1},
       pages={Paper 81, 42},
      review={\MR{2788698}},
}

\bib{cr}{article}{
      author={Chinburg, Ted},
      author={Rumely, Robert},
       title={The capacity pairing},
        date={1993},
        ISSN={0075-4102},
     journal={J. reine angew. Math.},
      volume={434},
       pages={1\ndash 44},
         url={http://dx.doi.org/10.1515/crll.1993.434.1},
      review={\MR{1195689}},
}

\bib{cs}{book}{
      author={Conway, J.~H.},
      author={Sloane, N. J.~A.},
       title={Sphere packings, lattices and groups},
     edition={Third},
      series={Grundlehren der Mathematischen Wissenschaften},
   publisher={Springer-Verlag, New York},
        date={1999},
      volume={290},
        ISBN={0-387-98585-9},
         url={http://dx.doi.org/10.1007/978-1-4757-6568-7},
      review={\MR{1662447}},
}

\bib{ssv}{article}{
      author={Dutour~Sikiri\'c, Mathieu},
      author={Sch\"urmann, Achill},
      author={Vallentin, Frank},
       title={Complexity and algorithms for computing {V}oronoi cells of
  lattices},
        date={2009},
        ISSN={0025-5718},
     journal={Math. Comp.},
      volume={78},
      number={267},
       pages={1713\ndash 1731},
  url={https://doi-org.proxy.library.cornell.edu/10.1090/S0025-5718-09-02224-8},
      review={\MR{2501071}},
}

\bib{faltCalc}{article}{
      author={Faltings, Gerd},
       title={Calculus on arithmetic surfaces},
        date={1984},
        ISSN={0003-486X},
     journal={Ann. of Math. (2)},
      volume={119},
      number={2},
       pages={387\ndash 424},
         url={https://doi-org.proxy.library.cornell.edu/10.2307/2007043},
      review={\MR{740897}},
}

\bib{FRSS}{article}{
      author={Foster, Tyler},
      author={Rabinoff, Joseph},
      author={Shokrieh, Farbod},
      author={Soto, Alejandro},
       title={Non-{A}rchimedean and tropical theta functions},
        date={2018},
     journal={Math. Ann.},
         url={http://dx.doi.org/10.1007/s00208-018-1646-3},
        note={\href{http://dx.doi.org/10.1007/s00208-018-1646-3}{DOI
  10.1007/s00208-018-1646-3}},
}

\bib{grzas}{article}{
      author={Greene, Curtis},
      author={Zaslavsky, Thomas},
       title={On the interpretation of {W}hitney numbers through arrangements
  of hyperplanes, zonotopes, non-{R}adon partitions, and orientations of
  graphs},
        date={1983},
        ISSN={0002-9947},
     journal={Trans. Amer. Math. Soc.},
      volume={280},
      number={1},
       pages={97\ndash 126},
         url={https://doi-org.proxy.library.cornell.edu/10.2307/1999604},
      review={\MR{712251}},
}

\bib{deJongAsymptotic}{article}{
      author={de~Jong, Robin},
       title={Asymptotic behavior of the {K}awazumi-{Z}hang invariant for
  degenerating {R}iemann surfaces},
        date={2014},
        ISSN={1093-6106},
     journal={Asian J. Math.},
      volume={18},
      number={3},
       pages={507\ndash 523},
         url={https://doi.org/10.4310/AJM.2014.v18.n3.a7},
      review={\MR{3257838}},
}

\bib{deJongFaltings}{article}{
      author={{d}e Jong, Robin},
       title={{F}altings delta-invariant and semistable degeneration},
        date={2015},
         url={https://arxiv.org/abs/1511.06567},
        note={To appear in {\em J. Differential Geom.} Preprint available at
  \href{https://arxiv.org/abs/1511.06567}{{\tt ar{X}iv:1511.06567}}},
}

\bib{deJongPointlike}{article}{
      author={de~Jong, Robin},
       title={Point-like limit of the hyperelliptic {Z}hang-{K}awazumi
  invariant},
        date={2015},
        ISSN={1558-8599},
     journal={Pure Appl. Math. Q.},
      volume={11},
      number={4},
       pages={633\ndash 653},
         url={https://doi.org/10.4310/PAMQ.2015.v11.n4.a4},
      review={\MR{3613124}},
}

\bib{djneron}{article}{
      author={de~Jong, Robin},
       title={N\'eron-{T}ate heights of cycles on {J}acobians},
        date={2018},
        ISSN={1056-3911},
     journal={J. Algebraic Geom.},
      volume={27},
      number={2},
       pages={339\ndash 381},
      review={\MR{3764279}},
}

\bib{FR1}{unpublished}{
      author={de~Jong, Robin},
      author={Shokrieh, Farbod},
       title={Faltings height and {N}\'eron-{T}ate height of a theta divisor},
        date={2018},
        note={Preprint},
}

\bib{FR2}{unpublished}{
      author={de~Jong, Robin},
      author={Shokrieh, Farbod},
       title={Metric graphs, cross ratios, and {R}ayleigh's laws},
        date={2018},
        note={Preprint},
}

\bib{Kirchhoff}{article}{
      author={Kirchhoff, Gustav},
       title={Ueber die {A}ufl{\"o}sung der {G}leichungen, auf welche man bei
  der {U}ntersuchung der linearen {V}ertheilung galvanischer {S}tr{\"o}me
  gef{\"u}hrt wird},
        date={1847},
     journal={Annalen der Physik},
      volume={148},
      number={12},
       pages={497\ndash 508},
}

\bib{ks}{article}{
      author={Kotani, Motoko},
      author={Sunada, Toshikazu},
       title={Jacobian tori associated with a finite graph and its abelian
  covering graphs},
        date={2000},
        ISSN={0196-8858},
     journal={Adv. in Appl. Math.},
      volume={24},
      number={2},
       pages={89\ndash 110},
         url={http://dx.doi.org/10.1006/aama.1999.0672},
      review={\MR{1748964}},
}

\bib{LP}{book}{
      author={Lyons, Russell},
      author={Peres, Yuval},
       title={Probability on trees and networks},
      series={Cambridge Series in Statistical and Probabilistic Mathematics},
   publisher={Cambridge University Press, New York},
        date={2016},
      volume={42},
        ISBN={978-1-107-16015-6},
         url={http://dx.doi.org/10.1017/9781316672815},
      review={\MR{3616205}},
}

\bib{Moriwaki}{article}{
      author={Moriwaki, Atsushi},
       title={A sharp slope inequality for general stable fibrations of
  curves},
        date={1996},
        ISSN={0075-4102},
     journal={J. Reine Angew. Math.},
      volume={480},
       pages={177\ndash 195},
  url={https://doi-org.proxy.library.cornell.edu/10.1515/crll.1996.480.177},
      review={\MR{1420563}},
}

\bib{mz}{incollection}{
      author={Mikhalkin, Grigory},
      author={Zharkov, Ilia},
       title={Tropical curves, their {J}acobians and theta functions},
        date={2008},
   booktitle={Curves and abelian varieties},
      series={Contemp. Math.},
      volume={465},
   publisher={Amer. Math. Soc., Providence, RI},
       pages={203\ndash 230},
         url={http://dx.doi.org/10.1090/conm/465/09104},
      review={\MR{2457739}},
}

\bib{os}{article}{
      author={Oda, Tadao},
      author={Seshadri, C.~S.},
       title={Compactifications of the generalized {J}acobian variety},
        date={1979},
        ISSN={0002-9947},
     journal={Trans. Amer. Math. Soc.},
      volume={253},
       pages={1\ndash 90},
         url={http://dx.doi.org/10.2307/1998186},
      review={\MR{536936}},
}

\bib{Wilms}{article}{
      author={Wilms, Robert},
       title={New explicit formulas for {F}altings' delta-invariant},
        date={2017},
        ISSN={0020-9910},
     journal={Invent. Math.},
      volume={209},
      number={2},
       pages={481\ndash 539},
  url={https://doi-org.proxy.library.cornell.edu/10.1007/s00222-016-0713-1},
      review={\MR{3674221}},
}

\bib{ZhGrSch}{article}{
      author={Zhang, Shou-Wu},
       title={Gross-{S}choen cycles and dualising sheaves},
        date={2010},
        ISSN={0020-9910},
     journal={Invent. Math.},
      volume={179},
      number={1},
       pages={1\ndash 73},
  url={https://doi-org.proxy.library.cornell.edu/10.1007/s00222-009-0209-3},
      review={\MR{2563759}},
}

\bib{Zhang93}{article}{
      author={Zhang, Shouwu},
       title={Admissible pairing on a curve},
        date={1993},
        ISSN={0020-9910},
     journal={Invent. Math.},
      volume={112},
      number={1},
       pages={171\ndash 193},
         url={http://dx.doi.org/10.1007/BF01232429},
      review={\MR{1207481}},
}

\end{biblist}
\end{bibdiv}


\end{document}